\let\original@footnotemark\footnotemark
\newcommand{\align@footnotemark}{%
  \ifmeasuring@
    \chardef\@tempfn=\value{footnote}%
    \original@footnotemark
    \setcounter{footnote}{\@tempfn}%
  \else
    \iffirstchoice@
      \original@footnotemark
    \fi
  \fi}
\pretocmd{\start@align}{\let\footnotemark\align@footnotemark}{}{}
\numberwithin{equation}{subsection}
\newtheorem{thm}{{Theorem}}[section]
\newtheorem{prop}{{Proposition}}[section]
\newtheorem{lem}{{Lemma}}[section]
\newtheorem{rem}{{Remark}}[section]
\newtheorem{cor}{{Corollary}}[section]
\newtheorem*{que*}{Question}
\newtheorem*{Principle*}{Principle}
\newtheorem*{MetaTheorem*}{Meta Theorem}
\theoremstyle{plain}
\newtheorem{Main}{Theorem}
\newcommand{\Ad}{\mathrm{Ad}}
\newcommand{\SO}{\mathrm{SO}}
\newcommand{\SU}{\mathrm{SU}}
\newcommand{\GL}{\mathrm{GL}}
\newcommand{\SL}{\mathrm{SL}}
\newcommand{\M}{\mathrm{M}}
\newcommand{\R}{\mathbb{R}}
\newcommand{\Q}{\mathbb{Q}}
\newcommand{\C}{\mathbb{C}}
\newcommand{\Z}{\mathbb{Z}}
\newcommand{\N}{\mathbb{N}}
\newcommand{\fso}{\mathfrak{so}}
\newcommand{\fz}{\mathfrak{z}}
\newcommand{\diag}{\mathrm{diag}}
\newcommand{\Conj}{\mathrm{Conj}}
\newcommand{\End}{\mathrm{End}}
\newcommand{\T}{\mathbb{T}}
\newcommand{\cst}{\mathrm{cst}}
\newcommand{\RDC}{\mathrm{RDC}}
\def\cR{\mathcal{R}}
\def\cB{\mathcal{B}}
\def\cP{\mathcal P}
\def\cT{\mathcal T}
\def\DC{\mathrm{DC}}
\def\a{\alpha}
\def\e{\epsilon}
\def\ph{\varphi}
\def\ti{\tilde}
\def\la{\lambda}
\def\be{\begin{equation}}
\def\ee{\end{equation}}
\def\bm{\begin{pmatrix}}
\def\em{\end{pmatrix}}
\def\bsm{\begin{smallmatrix}}
\def\esm{\end{smallmatrix}}
\author{
Xuanji Hou\footnote{Hubei Key Laboratory of Mathematical Sciences, School of Mathematics and Statics, Huazhong Normal University, Wuhan 430079, China}, Yi Pan\footnote{Institute of Science and Technology Austria, Klosterneuburg 3400, Austria}, Qi Zhou\footnote{Chern Institute of Mathematics, NanKai University, Tianjin 300071, China}}
\title{Dynamical classification of analytic one-frequency quasi-periodic $\SO(3,\R)$-cocycles}
\date{}
\begin{document}
\maketitle

\abstract
We establish a close connection between acceleration and dynamical degree for one-frequency quasi-periodic compact cocycles, by showing that two vectors derived separately from each coincide. Based on this, we provide a dynamical classification of one-frequency quasi-periodic $\SO(3,\R)$-cocycles.

\section{Introduction}

The classification of dynamics is an important topic in dynamical systems. One well-known example is the classification of circle diffeomorphisms. Let $\T:=\R/\Z$ and $f: \T \rightarrow \T$ be an orientation-preserving $C^r$ homeomorphism, where $r\in\N\cup\{\infty,\omega\}$. Poincar\'e laid the foundations of the theory of circle diffeomorphisms and introduced the concept of rotation number in the classification of these diffeomorphisms. The rotation number $\rho(f)\in\R$ of $f$ is defined as the uniform limit $$\lim_{n\to\infty}\frac{{ f}^n(x)-x}{n}.$$ The rotation number is independent of the choice of $x\in\T$ and is invariant under conjugations, making it a useful dynamical invariant.
Poincar\'{e}'s classification theorem states that if the rotation number $\rho(f)$ is rational $\frac{p}{q}$, then $f$ has a periodic orbit of period $q$. If the rotation number is irrational, then $f$ is {\it semi-conjugate} to the irrational (rigid) rotation. Denjoy \cite{De} further showed that if the rotation number is irrational and $\ln Df$ has bounded variation, then the semi-conjugacy is a homeomorphism. This result was later extended by Arnold~\cite{Arnold}, as part of the KAM theory, who showed that if $f$ is $C^{\omega}$ close to a rotation and the rotation number $\rho(f)$ satisfies the Diophantine condition, then the conjugacy is analytic. Subsequent work by Herman, Yoccoz, Katznelson, Ornstein, Sinai, and others \cite{He79,He85,Y84, Y, KO89a,KO89b,KS87,SK89} extended this result to the global version of analytic, smooth, or finitely differentiable circle diffeomorphisms $f$. They showed that when $f$ is analytic (resp. $C^\infty$, $C^k$ with $k>2$) and $\rho(f)$ is Diophantine, the conjugacy is analytic (resp. $C^\infty$, $C^{k-r}$ for some $0<r<k$). Moreover, the Diophantine condition can be weakened to the Brjuno condition, which turns to be necessary \cite{Y}. Therefore, the dynamics of the circle diffeomorphism $f$ can be completely determined by its rotation number $\rho(f)$.

A one-frequency cocycle is a natural generalization of a circle diffeomorphism in high dimensions. Let ${\rm G}$ be a Lie-subgroup of ${\rm GL}(s, \C)$.
A  {\it  $C^r$ one-frequency  ${\rm G}$-cocycle}  $(\alpha,A)$ is a skew-product defined on $\T\times\mathbb C^s$ such that
$$(\alpha,A): \,\T\times \mathbb C^s\to \T\times \mathbb C^s,\quad (x,v)\mapsto (x+\alpha,A(x)\cdot v),$$
where $\alpha \in \R\backslash
\Q$, $A\in C^r(\T,{\rm GL}(s,\C))$, $r\in\N\cup\{\infty\}$. The iterates of the cocycle $(\alpha, A)^n=(n \alpha, A_n)$ are defined as   $A_{0}(x)=I$,
\begin{equation*}A_{n}(\cdot)=A(\cdot+(n-1)\alpha)\cdots A(\cdot), \quad n\ge 1,
\end{equation*} and $A_n(\cdot)=A_{-n}(\cdot-n\alpha)^{-1}$ for $n\leq -1$. 

\smallskip

When $G=\SO(2,\R)$, taking projective action, it naturally induces the cocycle
$$T_{\phi}: \T\times \T\to\T\times \T,\quad (x,g) \mapsto (x+\alpha, \phi(x)\cdot g) .$$
Moreover, if $\phi$ is smooth, then dynamical properties of $T_{\phi}$ depend on the topological degree $d(\phi)$ of $\phi$. For example, if  $\phi$ is $C^2$ with $d(\phi)\neq 0$, then   $T_{\phi}$ is ergodic and it has a countable Lebesgue spectrum on the orthocomplement of the space of functions depending only on the first variable \cite{ILR}. If $\phi$   is absolutely continuous with $d(\phi)= 0$, then  $T_{\phi}$ has a singular spectrum \cite{GLL}.

However, when  $G$ is non-compact, hyperbolicity appears.  Therefore, another important dynamical invariant is introduced.  For a cocycle $(\alpha,A)\in \R\backslash
\Q \times C^0(\T,{\rm GL}(s,\C))$,  we define the {\it Lyapunov exponents} $L_1(\alpha, A)\geq L_2(\alpha, A)\geq ...\geq L_s(\alpha,A)$  as
$$
L_k(\alpha,A):=\lim\limits_{n\rightarrow\infty}\frac{1}{n}\int_{\T}\ln(\sigma_k(A_n(x)))dx,\quad k=1,\cdots,s,
$$
(repeatedly according to multiplicities), where for a matrix $B$ we denote by $\sigma_1(B)\geq...\geq \sigma_s(B)$ its singular values  (eigenvalues of $\sqrt{B^*B}$).   As the k-th exterior product $\Lambda^kB$ of $B$ satisfies $\|\Lambda^kB\|=\sum_{j=1}^k\sigma_j(B)$, one      can define 
\begin{equation}\label{LEs}
L^k(\alpha,A):=\sum\limits_{j=1}^kL_j(\alpha,A)=\lim\limits_{n\rightarrow \infty}\frac{1}{n}\int_{\T}\ln(\|\Lambda^kA_n(x)\|)dx, \quad 
k=1,\cdots,s.
\end{equation}
Unfortunately, these Lyapunov exponents are far from enough to classify the cocycles. In fact, even for cocycles with all Lyapunov exponents vanishing, the dynamics still have a variety of possibilities \cite{E92,Kri02}.

\subsection{Accelerations}
A breakthrough came recently, with the establishment of the global theory of one-frequency analytic Schrödinger operators. Avila \cite{A15}
classified one-frequency analytic $\SL(2,\R)$ cocycles and proposed another dynamical invariant called "accelerations". Indeed, for any analytic cocycle $(\alpha,A)\in \R\backslash \Q \times C^{\omega}(\T,{\rm GL}(s,\C))$, there exists $\delta>0$ such that $A$ can be holomorphically extended to $\{|\Im z|\leq\delta\}$. Let $A_\e(x):= A(x+i\e)$ for $|\e|<\delta$. We can then define the {\it accelerations} of $(\alpha,A)$ as follows:
\begin{equation}
	\omega^k:=\lim\limits_{\e\rightarrow 0^+}\frac{1}{2\pi\e}(L^k(\alpha,A_\e)-L^k(\alpha,A)), \qquad
	\omega_k:=\omega^k-\omega^{k-1}.
\end{equation}
Furthermore, we define $(\omega_1,\cdots,\omega_s)$ as the {\it acceleration vector} of $(\alpha,A)$. The most important observation is that the accelerations are quantized \cite{A15,AJS14}, {namely} that there exists $l\in \{1,\cdots, s\}$ such that both $ l \omega_k$ and $l\omega^k$ are integers. 
Actually the accelerations are dynamical invariants as they remain invariant under conjugations. We recall that $(\alpha, A_i)\in \R\backslash \Q \times C^{\omega}(\T,G)$, $i=1,2$, are conjugate to each other, if there exists $B\in C^\omega( \chi_{G}\T, G)$\footnote{$\chi_G\in\{1,2\}$. In particular, for $G=\SO(3,\R)$, $\chi_G=1$.}   such that $$A_1(\cdot)=B(\cdot +\alpha) A_2(\cdot) B(\cdot)^{-1}. $$

With these notations, Avila's classification of one-frequency analytic $\SL(2,\R)$-cocycles is introduced \cite{A15}. If $A$ is homotopic to the identity, or in other words, if the topological degree $d(A)=0$, then the dynamical behavior can be determined by the Lyapunov exponent and acceleration. To be precise, if $\omega_1 = 0$, then the cocycle is subcritical (resp. uniformly hyperbolic) when $L_1(\alpha,A) = 0$ (resp. $L_1(\alpha,A) > 0$). On the contrary, if $\omega_1 \neq 0$, $(\alpha,A)$ is critical (resp. supercritical or non-uniformly hyperbolic) when $L_1(\alpha,A) = 0$ (resp. $L_1(\alpha,A) > 0$). On the other hand, if the topological degree $d(A) \neq 0$, i.e., $A\colon$ $\T \to{\rm SL}(2,\R)$ is homotopic to $\theta \mapsto R_{d(A)\theta}$, where

\[ R_\theta:=
\begin{pmatrix}
	\cos2 \pi\theta & -\sin2\pi\theta\\
	\sin2\pi\theta & \cos2\pi\theta
\end{pmatrix},
\]
Avila-Krikorian \cite{AK15} proved that if $(\alpha,A)$ is $L^2$-conjugate to rotations, then it is $C^{\omega}$-conjugate to rotations. In conclusion, the classification of analytic $\SL(2,\R)$-cocycles depends on three dynamical invariants: the topological degree, Lyapunov exponent, and acceleration.

The natural inquiry is whether this kind of classification can be generalized to high-dimensional cocycles. As an initial attempt, we consider this question for cocycles valued in ${\rm SO}(s,\R)$, $s\in\N^*$. Note that if $A$ takes values in $\SO(s,\R)$, then $L_k(\alpha,A)=0$, $k=1,\cdots,s$. Hence the monotonicity on singular values therefore Lyapunov exponents provides that
$$\omega_1\geq\cdots\geq\omega_s.$$  In particular, for any $\SO(3,\R)$-cocycle,  as determinant is identically $1$, we have  
$\omega_1+\omega_2+\omega_3=0.$
As singular values $\la,\la^{-1}$ come in pairs, we deduce that nonzero accelerations are in pairs $\omega,-\omega$. Hence the acceleration vector must be in the form
\be
(\omega_1,0,-\omega_1),
\ee
i.e., essentially has one variable. We find that, contrary to the $\SL(2,\R)$ case, the acceleration determines the dynamics of analytic one-frequency ${\rm SO}(3,\R)$ cocycles.

\begin{Main}
	\label{thm.ac.2}
	Let $\a\in \R\backslash\Q$ and $A\in C^\omega(\T, {\rm SO}(3,\R))$, then we have the following:
	\begin{enumerate}
		\item $\omega_1=0$ if and only if $(\a,A)$ is $C^\omega$-almost reducible, i.e., the closure of the analytic conjugate class of $(\a,A)$ contains a constant cocycle.
		\item Assuming additionally that $\a\in \RDC$, if $\omega_1 \neq 0$, then $(\a,A)$ is $C^\omega$-conjugate to $$(\a,\exp\bm0& \omega_1  x+c&0\\-\omega_1 x-c&0&0\\0&0&0\em),$$ for some $c\in\R$.
	\end{enumerate}
\end{Main}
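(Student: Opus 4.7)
The overall plan is to treat the two implications of (1) asymmetrically and to reduce (2) to (1) after extracting an invariant axis. Throughout, the main lever is the paper's central identification (stated in the abstract) between the acceleration vector and the ``dynamical degree vector,'' together with the $\SO(3,\R)$ constraint that forces the acceleration vector to be $(\omega_1,0,-\omega_1)$.

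For part (1), the direction $(\Leftarrow)$ is routine: the acceleration vector is conjugation-invariant by construction, and for a constant cocycle $(\alpha, A_\ast)$ with $A_\ast \in \SO(3,\R)$ the holomorphic extension $A_{\ast,\e}\equiv A_\ast$ is $\e$-independent, so $L^k(\alpha, A_{\ast,\e})$ does not vary with $\e$ and all $\omega^k$ vanish. Combining conjugation-invariance with upper semicontinuity (and quantization) of the acceleration passes the property $\omega_1 = 0$ to the closure of the analytic conjugacy class, so almost reducibility forces $\omega_1=0$. The reverse implication $\omega_1 = 0 \Rightarrow$ $C^\omega$-almost reducibility is the harder step: the constraint on the acceleration vector makes it vanish identically, and the main theorem of the paper then gives that the entire dynamical degree vector is zero. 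In the compact setting, zero Lyapunov exponents combined with vanishing dynamical degrees should place $(\alpha,A)$ in an analogue of Avila's ``subcritical'' regime; an adapted almost-reducibility theorem (in the spirit of \cite{A15}) then gives the conclusion, without any Diophantine hypothesis on $\alpha$.

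For part (2), under $\omega_1 \ne 0$ and $\alpha \in \RDC$, the decisive observation is that the vanishing middle component of $(\omega_1,0,-\omega_1)$ corresponds, through the acceleration/dynamical-degree identification, to the existence of a one-dimensional invariant direction. Concretely, I would extract an analytic map $v\colon \T \to S^2$ satisfying $A(x)v(x)=\pm v(x+\alpha)$ (which exists by the structural content of the main theorem). Conjugating by an analytic $B(x)\in\SO(3,\R)$ sending $v(x)$ to the third standard basis vector $e_3$, the conjugated cocycle stabilizes $e_3$ and hence takes values in the $\SO(2,\R)$-subgroup fixing $e_3$. It therefore has the form
\[
\exp\begin{pmatrix}0 & \phi(x) & 0\\ -\phi(x) & 0 & 0\\ 0 & 0 & 0\end{pmatrix},
\]
for some analytic $\phi\colon\R\to\R$ with $\phi(x+1)-\phi(x)=\omega_1$ (i.e.\ topological degree $\omega_1$, as matched by the main theorem). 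Writing $\phi(x)=\omega_1 x + \tilde\phi(x)$ with $\tilde\phi$ periodic analytic, the Diophantine condition on $\alpha$ lets me solve the cohomological equation $\psi(x+\alpha)-\psi(x)=\tilde\phi(x)-c$ in $C^\omega(\T,\R)$ by Fourier analysis, and a final conjugation by the planar rotation of angle $\psi$ puts the cocycle into the stated normal form.

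The main obstacle I expect is step (2)'s production of the invariant section $v$: it is the geometric content of the identification between acceleration and dynamical degree, and its analytic (as opposed to merely measurable) regularity must be squeezed out of the quantization plus the structure theorem. The $(\Rightarrow)$ direction in (1) is also delicate, because almost reducibility for $\SO(3,\R)$-valued cocycles is not as well-developed as in the $\SL(2,\R)$ case and requires an adaptation of Avila's arguments to the compact non-abelian setting; once that adaptation is in place, however, the absence of Lyapunov exponents and of dynamical degrees removes all obstructions to approximating $(\alpha,A)$ by constants within its conjugacy class.
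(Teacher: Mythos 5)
Your direction $(\Leftarrow)$ of part (1) is fine and essentially matches the paper: conjugation-invariance plus upper semicontinuity of the acceleration, together with $\omega_1 \ge 0$ (forced by $\omega_1\ge\omega_2\ge\omega_3$ and $\sum\omega_i=0$), gives $\omega_1=0$. Everything else in the proposal has substantive gaps.

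For the direction $\omega_1 = 0 \Rightarrow$ almost reducibility, your plan (``zero Lyapunov exponents plus vanishing dynamical degree place us in an analogue of Avila's subcritical regime, then an adapted almost-reducibility theorem in the spirit of \cite{A15} applies'') is not an argument but a label. The paper explicitly states its proof of this implication is ``completely different'' from the $\SL(2,\R)$ case: one uses Theorem~\ref{thm.ac.1} to get vanishing dynamical degree, then invokes the convergence of renormalization (Theorem~\ref{sect4.0.2}) with $D=0$ to see that deep renormalizations are arbitrarily close to constants, and then applies a genuinely new local almost-reducibility theorem (Theorem~\ref{thm.loc.dis}) valid for \emph{all} irrational $\alpha$, proved by embedding into a linear system and running the Hou--You non-standard KAM scheme. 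That local theorem is the heart of the matter, and nothing in your write-up substitutes for it.

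For part (2), the central gap is the analytic invariant section $v\colon\T\to S^2$. You write that it ``exists by the structural content of the main theorem,'' but Theorem~\ref{thm.ac.1} only identifies the acceleration vector with the vector of dynamical degree, and the dynamical degree is an $L^2$ object (the von Neumann average $D\in L^2(\T,\fso(3,\R))$). Its kernel gives at best a \emph{measurable} invariant direction; upgrading to analytic regularity is precisely the hard content, and the paper achieves it via convergence of renormalization followed by a Krikorian-style KAM iteration (Theorem~\ref{thm.4.1}) that crucially uses estimates on length (Lemma~\ref{lem.length}) and only works near the normal form at Diophantine renormalized frequencies. Moreover, your final cohomological equation $\psi(\cdot+\alpha)-\psi(\cdot)=\tilde\phi-c$ cannot be solved by straightforward Fourier analysis under the standing hypothesis: $\alpha\in\RDC$ does \emph{not} imply $\alpha$ is Diophantine (only that infinitely many Gauss iterates $\alpha_n$ lie in a fixed $\DC(\gamma,\tau)$). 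This is exactly why the paper works at the renormalized level, solving the local problem for the Diophantine $\alpha_n$ and then pulling the conjugation back via the invariance of conjugacy under renormalization (Lemma~\ref{lem.ren.conj} together with Lemma~\ref{ren.nor.form}). As written, your approach neither produces the analytic section nor handles the Liouvillean base frequency.
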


We say $\alpha\in\R\backslash\Q$ satisfies a \textit{Diophantine condition} DC($\kappa,\tau$) if $\kappa>0$ and $\tau>0$, where

\[ ||q\alpha-p||_\Z>\frac{\kappa}{|q|^{\tau}}, ~(p,q)\in\Z^2, ~q\neq0,
\]
where $||\cdot||_\Z$ represents the distance to the nearest integer for a real number. Furthermore, we say that $\alpha \in \R\backslash\Q$ is {\it recurrent Diophantine} if there exist some $\gamma>0,\tau>1$ such that $G^n(\alpha)\in \DC(\gamma,\tau)$ for infinitely many $n$, where $G: (0,1)\rightarrow (0,1)$ is the Gauss map defined by $G(x)= \{\frac{1}{x}\}$ and $\{x\}$ denotes the fractional part of $x$. Let $\RDC$ be the set of recurrent Diophantine numbers, which has a full Lebesgue measure. We point out that the condition $\alpha \in \RDC$ is used for technical reason. It would be interesting to generalize the result to all $\alpha\in \R\backslash\Q$.

Theorem \ref{thm.ac.2} provides a comprehensive understanding of analytic one-frequency $\SO(3,\R)$-cocycles. This theorem is particularly interesting for several reasons. Firstly, Theorem \ref{thm.ac.2} (1), generalizes Avila's well-known Almost Reducibility Conjecture (ARC) to high-dimensional compact groups. The ARC states that if $(\alpha,A) \in \R\backslash\Q \times C^{\omega}$ $(\T,{\rm SL}(2,\R))$ is subcritical, then it is analytically almost reducible. This conjecture has important implications for both dynamical and spectral aspects, as highlighted in various works \cite{A3,A15,AJM,AKL,AYZ, GY,MJ}. Notably, our proof of the generalized ARC in Theorem \ref{thm.ac.2} is completely different from the proof in the case of $\SL(2,\R)$ \cite{A3,A2}. Our method is applicable to  compact group, demonstrating that ARC holds for general compact groups. For the sake of conciseness, we present the result in the case of $\SO(3,\R)$.

Moreover, Theorem \ref{thm.ac.2} provides classification  for one-frequency analytic $\SO(3,\R)$-cocycle: all cocycles can be classified into almost reducible ones  and non-almost-reducible ones,
 since the normal form cocycle in Theorem \ref{thm.ac.2} (2) is obviously not almost-reducible. Furthermore, Theorem \ref{thm.ac.2} (1) states that almost reducibility  holds for all irrational frequency (not merely $\RDC$ as in \cite{Fra04,K01}), which is a completely new result.

%

Finally, we would like to mention that for the case $\omega_1=0$, the result should hold for general compact groups. However, when $\omega_1\neq 0$, the rigidity result, as demonstrated in Theorem \ref{thm.ac.2} (2), can only be expected in a weaken version (achieving by a sequence of conjugations) for general compact groups. This is due to extra difficulty solving cohomological equation, arising in the possibility of the matrix form of dynamical degree being degenerate and having a large centralizer. One can consult \cite{P23} for more discussions. 

\subsection{Dynamical Degree}

As mentioned earlier, for $G=\SO(2,\R)$ or $\SL(2,\R)$, where the fundamental group of $G$ is nontrivial, the topological degree plays a significant role in dynamical classification. However, for $G=\SO(s,\R)$, $s\geq3$, the fundamental group is $\Z/2\Z$, rendering the topological degree less applicable. In contrast, another degree called the "dynamical degree" can be defined and utilized. The concept of dynamical degree was initially proposed by Krikorian \cite{K01} and formally introduced by Fr\k{a}czek~\cite{Fra04} for $\SU(2)$-cocycles. It can be extended to any compact cocycles and general cocycles $L^2$-conjugate to rotations \cite{AKP,Ka15,P23}.

Let $\alpha \in \mathbb{R}\backslash\mathbb{Q}$ and $A\in C^1(\mathbb{T},  \text{SO}(s,\mathbb{R}))$, $s\geq3$, $\mathcal{A}_\alpha$ be the operator defined as following:
$$\mathcal{A}_\alpha: L^2(\mathbb{T}, \mathfrak{so}(s,\mathbb{R}))\longleftrightarrow L^2(\mathbb{T}, \mathfrak{so}(s,\mathbb{R})), \quad Y\mapsto  A^{-1}(\cdot) Y(\cdot+\alpha) A(\cdot),$$
and $\mathcal{A}_\alpha^j$ represent its $j$-th iteration. It is known that $\mathcal{A}_\alpha$ is a unitary operator. According to von Neumann's ergodic theorem, there exists an $\mathcal{A}_\alpha$-invariant $D\in  L^2(\mathbb{T}, \mathfrak{so}(s,\mathbb{R}))$, such that the following convergence holds:
\begin{eqnarray}\label{L2-limit}
	\frac{1}{n} A_n(\cdot)^{-1}\partial A_n(\cdot)=\frac{1}{n}\sum_{j=0}^{n-1}\mathcal{A}_\alpha^j(A(\cdot)^{-1}\partial A(\cdot))
	\xrightarrow{L^2} D,
\end{eqnarray}
and $A(x)^{-1}D(x+\alpha)A(x)=D(x)$ almost everywhere. We call any matrix $D$ in the algebraic conjugate class the {\it dynamical degree} of $(\alpha,A)$. The dynamical degree is invariant under $C^1$-conjugation. Since $D\in\fso(s,\mathbb{R})$, there exist $Q\in\text{SU}(s)$ and $d_1,\cdots,d_s\in\mathbb{R}$ such that $D$ is conjugate to the diagonal form
$$
D=Q\cdot2\pi i\text{diag}\left(d_1,\cdots,d_s\right)\cdot Q^{-1}.
$$
Furthermore, notice that conjugation of $\bm0&1\\-1&0\em\in\SO(2,\R)$ on diagonal matrix acts like a transposition on diagonal elements, we can properly order the diagonal entries of $D$ such that
$$
d_1\geq \cdots\geq d_s.
$$
We thus define the {\it vector of dynamical degree} to be 
$\vec{d}=(d_1,\cdots,d_s).$

\smallskip

We note that the dynamical degree is quantized ($d_k\in\mathbb{Z}$, $k=1,\cdots,s$), as proven in \cite{AKP, P23}.
Since acceleration is also quantized \cite{A15, AJS14}, a natural question arises as to whether there is any relation between them. In fact, for analytic $\text{SO}(s,\mathbb{R})$-cocycles, they are the same.

\begin{Main}
	\label{thm.ac.1}
	Let  $\alpha \in \mathbb{R}\backslash\mathbb{Q}$ and $A\in C^\omega (\mathbb{T}, \SO(s,\mathbb{R}))$. The acceleration vector of $(\alpha,A)$ is the same as the vector of dynamical degree of $(\alpha,A)$.
\end{Main}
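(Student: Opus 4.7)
\emph{Proof proposal.} The plan is to compute the right-slope $2\pi\omega^k = \lim_{\epsilon\to 0^+} L^k(\alpha, A_\epsilon)/\epsilon$ by a first-order perturbation in $\epsilon$ and identify it with $2\pi\sum_{j=1}^k d_j$; the telescoping $\omega_k = \omega^k - \omega^{k-1}$ and $d_k = \sum_{j\le k}d_j - \sum_{j<k}d_j$ will then give $\omega_k = d_k$. Since $A$ takes values in $\SO(s,\R)$, $L^k(\alpha, A) = 0$ and by Avila's global theory $L^k(\alpha, A_\epsilon)$ is convex and piecewise-linear in a right-neighbourhood of $0$ with right-slope $2\pi\omega^k$. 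Meanwhile, $D(x) \in \fso(s,\R)$ a.e., its spectrum $\{2\pi i d_l\}$ is constant in $x$ by the invariance $A^{-1}D(\cdot+\alpha)A = D$, and real skew-symmetry forces the pair symmetry $d_{s+1-j} = -d_j$, which will be essential for matching signs.

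Write $\Phi_n^k(\epsilon) := \frac{1}{n}\int_\T \ln \|\Lambda^k A_n(x+i\epsilon)\|\,dx$ and $M_n := A_n^{-1}\partial A_n = \sum_{j=0}^{n-1}\mathcal{A}_\alpha^j(A^{-1}\partial A) \in \fso(s,\R)$. Orthogonality of $A_n(x)$ together with Hermiticity of $iM_n(x)$ yield
\begin{equation*}
A_n(x+i\epsilon)^* A_n(x+i\epsilon) = \bigl(I + i\epsilon M_n(x)\bigr)^2 + O(\epsilon^3),
\end{equation*}
so that $\sigma_j(A_n(x+i\epsilon)) = 1 + \epsilon\gamma_j(n,x) + O(\epsilon^2)$, where $\gamma_1(n,x) \geq \cdots \geq \gamma_s(n,x)$ are the real eigenvalues of $iM_n(x)$. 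Hence
\begin{equation*}
(\Phi_n^k)'(0^+) = \frac{1}{n}\int_\T \sum_{j=1}^k \gamma_j(n,x)\,dx.
\end{equation*}
By von Neumann's ergodic theorem, $\frac{1}{n}M_n \to D$ in $L^2(\T, \fso(s,\R))$. Since the top-$k$ Hermitian eigenvalue sum is Lipschitz in Frobenius norm, $\frac{1}{n}\sum_{j=1}^k\gamma_j(n,\cdot) \to \sum_{j=1}^k\gamma_j^*$ in $L^1$, where $\gamma_j^* = -2\pi d_{s+1-j}$ is the $j$-th largest eigenvalue of $iD(x)$ (constant in $x$). The pair symmetry then rearranges this to $\lim_n (\Phi_n^k)'(0^+) = 2\pi\sum_{j=1}^k d_j$.

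It remains to identify $\lim_n (\Phi_n^k)'(0^+)$ with $2\pi\omega^k$, which I expect to achieve by a subadditivity/convexity bracket. Subadditivity of $n\mapsto n\Phi_n^k(\epsilon)$ gives $L^k(\alpha, A_\epsilon) \leq \Phi_n^k(\epsilon)$ for every $n,\epsilon$; dividing by $\epsilon$ and letting $\epsilon \to 0^+$ (the LHS tending to $2\pi\omega^k$, the RHS to $(\Phi_n^k)'(0^+)$ by convexity with $\Phi_n^k(0) = 0$) produces $2\pi\omega^k \leq (\Phi_n^k)'(0^+)$ for each $n$. Conversely, convexity of $\Phi_n^k$ at $0$ gives $(\Phi_n^k)'(0^+) \leq \Phi_n^k(\epsilon)/\epsilon$; letting $n \to \infty$ (so $\Phi_n^k(\epsilon) \to L^k(\alpha, A_\epsilon)$ for each fixed $\epsilon > 0$) and then $\epsilon \to 0^+$ gives $\limsup_n (\Phi_n^k)'(0^+) \leq 2\pi\omega^k$. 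The two bounds force $\lim_n (\Phi_n^k)'(0^+) = 2\pi\omega^k$, hence $\omega^k = \sum_{j=1}^k d_j$. The most delicate point will be justifying that $L^2$-convergence of $\frac{1}{n}M_n$ to $D$ passes through the top-$k$ eigenvalue sum in the integrated form needed above, together with the plurisubharmonicity argument that installs the convexity of $\Phi_n^k$ and $L^k(\alpha, A_\epsilon)$ in $\epsilon$, on which the entire bracket rests.
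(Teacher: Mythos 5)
Your proposal is correct, and it takes a genuinely different route from the paper. The paper proves Theorem~\ref{thm.ac.1} through the renormalization machinery developed in Section~2: it first shows (Lemma~\ref{w.n}) that accelerations are preserved under renormalization, then invokes Theorem~\ref{sect4.0.2} to get $C^\omega$-convergence of representatives of renormalization to the normal form $\exp(C+D(x+it))$, and finally uses continuity of the Lyapunov exponent (from \cite{AJS14}) together with an explicit computation of $L_i(\a,\exp(C+D(x+it)))=2\pi t d_i$ to identify the accelerations with the degrees. Your argument bypasses renormalization entirely: you Taylor-expand $A_n(x+i\e)^*A_n(x+i\e)$ using real skew-symmetry of $M_n=A_n^{-1}\partial A_n$, identify $(\Phi_n^k)'(0^+)$ with the integrated top-$k$ eigenvalue sum of $iM_n/n$, pass to the von Neumann limit $D$ via Lipschitz dependence of eigenvalue sums in Frobenius norm, and then install the convexity/subadditivity bracket to identify $\lim_n(\Phi_n^k)'(0^+)$ with $2\pi\omega^k$. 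The bracket and the pair symmetry $d_{s+1-j}=-d_j$ (which holds because a real skew-symmetric spectrum is symmetric under negation) are both correct as you use them. What your approach buys is elementarity and independence from the renormalization-convergence theorem; what the paper's approach buys is consistency with the framework it already builds (Lemma~\ref{w.n} and Theorem~\ref{sect4.0.2} are reused in the proof of Theorem~\ref{thm.ac.2}), and it avoids the need to justify the convexity of $\Phi_n^k(\e)$ (from plurisubharmonicity of $\ln\|\Lambda^k A_n(z)\|$) and the eigenvalue-sum Lipschitz passage, both of which are standard but do require care (your error term should be $O(\e^2)$, not $O(\e^3)$, though this is harmless). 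Your last sentence correctly identifies the two places where detail would need to be supplied; neither represents a gap.
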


As the vector of dynamical degree has integer entries, we deduce the following result.
\begin{cor}
	Let $\alpha \in \mathbb{R}\backslash\mathbb{Q}$ and $A\in C^\omega (\mathbb{T}, \SO(s,\mathbb{R}))$. The accelerations of $(\alpha,A)$ are all integers.
\end{cor}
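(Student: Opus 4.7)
The proof is a direct combination of Theorem~\ref{thm.ac.1} with the already-quoted quantization of the dynamical degree. My plan is first to invoke Theorem~\ref{thm.ac.1}, which identifies the acceleration vector of $(\alpha,A)$ entrywise with the vector of dynamical degree $(d_1,\ldots,d_s)$. Then I invoke the quantization result for compact cocycles from \cite{AKP,P23} (recalled just before Theorem~\ref{thm.ac.1}) which asserts $d_k\in\Z$ for every $k$. Combining the two immediately gives $\omega_k\in\Z$ for $k=1,\ldots,s$.

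The only point requiring a brief check is that the conventions match on both sides of the identification in Theorem~\ref{thm.ac.1}: the accelerations $\omega_k$ are defined via the $\tfrac{1}{2\pi\e}$-normalization of the derivative of $L^k(\alpha,A_\e)$, while the $d_k$ are read off from the diagonalization $D=Q\cdot 2\pi i\,\diag(d_1,\ldots,d_s)\cdot Q^{-1}$, and the $2\pi$ factors on the two sides are precisely compatible. The ordering conventions (both vectors nonincreasing) likewise agree, since the $d_k$ have been reordered using the action of $\SO(2,\R)$-blocks on diagonals of $\fso(s,\R)$. With these routine compatibilities checked, no substantial obstacle remains at the level of the corollary itself — all of the work lies inside Theorem~\ref{thm.ac.1}. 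As a side remark, the conclusion strictly sharpens the general quantization of \cite{A15,AJS14}, which only produces a common denominator $l\in\{1,\ldots,s\}$ with $l\omega_k\in\Z$: in the $\SO(s,\R)$ setting one may always take $l=1$.
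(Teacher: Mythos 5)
Your proof is correct and is precisely the argument the paper intends: the corollary follows immediately from Theorem~\ref{thm.ac.1} together with the fact (recalled from \cite{AKP,P23}) that the dynamical degree vector has integer entries. Your additional check of the $2\pi$ normalization and ordering conventions is a reasonable sanity check, but the paper treats it as already built into Theorem~\ref{thm.ac.1}.
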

We recall \cite[Theorem 1.4]{AJS14}: when $A\in C^\omega(\mathbb{T},\mathcal{L}(\mathbb{C}^s,\mathbb{C}^s))$, for $k=1,\cdots,s$, there exists an integer $1\leq l\leq s$, such that $lw_k\in\mathbb{Z}$. Our improvement is essentially due to the compactness of $\text{SO}(s,\mathbb{R})$: eigenvalues always have algebraic multiplicity $1$, which leads to $l=1$.

\medskip

Consequently, the dynamical degree can be used to classify analytic one-frequency quasi-periodic $\text{SO}(3,\mathbb{R})$-cocycles. In particular, we can establish the relation between acceleration, dynamical degree, and almost reducibility.
\begin{cor}
	\label{thm.ac.3}
	Let $\alpha \in \mathbb{R}\backslash\mathbb{Q}$ and $A \in C^\omega (\mathbb{T}, \SO(3,\mathbb{R}))$. Then the following statements are equivalent:
	\begin{enumerate}
		\item The dynamical degree is zero,
		\item The largest acceleration $\omega_1$ is zero,
		\item $(\alpha,A)$ is $C^\omega$ almost reducible.
	\end{enumerate}
\end{cor}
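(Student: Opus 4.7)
The proof is essentially a synthesis of the two Main Theorems already stated, so my plan is to break the three-way equivalence into the two pairwise implications that each correspond directly to one of them.

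First, I would establish the equivalence (2) $\iff$ (3). This is an immediate restatement of Theorem \ref{thm.ac.2} (1): $\omega_1 = 0$ if and only if $(\alpha, A)$ is $C^\omega$-almost reducible. No additional argument is needed here, and in particular this equivalence does not require any Diophantine hypothesis on $\alpha$.

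Next, I would establish (1) $\iff$ (2) using Theorem \ref{thm.ac.1}. Recall that for $A \in C^\omega(\T, \SO(3,\R))$, the acceleration vector takes the form $(\omega_1, 0, -\omega_1)$, as explained in the introduction (the sum vanishes because $\det A \equiv 1$, and accelerations come in $\pm \omega$ pairs because of the symmetry of singular values). By Theorem \ref{thm.ac.1}, the dynamical-degree vector $\vec{d} = (d_1, d_2, d_3)$ coincides with the acceleration vector, so $(d_1, d_2, d_3) = (\omega_1, 0, -\omega_1)$. Now the dynamical degree $D$, viewed as an element of $\fso(3, \R)$, is similar to $2\pi i \,\mathrm{diag}(d_1, d_2, d_3)$. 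Hence $D = 0$ if and only if $d_1 = d_2 = d_3 = 0$, which is equivalent to $\omega_1 = 0$ since $d_2 = 0$ and $d_3 = -d_1$ automatically. This gives the equivalence (1) $\iff$ (2).

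Combining the two equivalences yields the full cyclic equivalence (1) $\iff$ (2) $\iff$ (3), completing the proof. There is no serious obstacle here: the real content lives in Theorems \ref{thm.ac.2} and \ref{thm.ac.1}, and the corollary is a clean packaging of them once one notes that the $\SO(3,\R)$ acceleration vector is determined by its first entry $\omega_1$. The only small point that needs to be verified is that the quantization/ordering convention used for $\vec{d}$ is compatible with identifying $d_1$ with $\omega_1$, which follows directly from the identification in Theorem \ref{thm.ac.1} together with the monotonicity $\omega_1 \geq \omega_2 \geq \omega_3$ noted in the introduction.
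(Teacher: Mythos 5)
Your proof is correct and matches the paper's intended reading: the corollary is presented as an immediate consequence of the two Main Theorems, and you have correctly assembled the two pairwise equivalences — (2)\,$\Leftrightarrow$\,(3) from Theorem~\ref{thm.ac.2}(1) (which holds for all irrational $\alpha$), and (1)\,$\Leftrightarrow$\,(2) from Theorem~\ref{thm.ac.1} together with the observation that the $\SO(3,\R)$ acceleration vector is $(\omega_1,0,-\omega_1)$, so $D$ is conjugate to $2\pi i\,\mathrm{diag}(\omega_1,0,-\omega_1)$ and vanishes precisely when $\omega_1=0$. The paper itself invokes exactly this link (acceleration $\Leftrightarrow$ dynamical degree) inside its proof of Theorem~\ref{thm.ac.2}(1), so your packaging is faithful to the intended argument.
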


\subsection{More Histories and Comments}

A \textit{quasi-periodic linear system on a Lie group $G$} (referred to as a linear system for short) is an ordinary differential equation on $\mathbb{T}^d \times G$ of the form
\begin{align}
	\label{loc.cont-1}
	\begin{cases}
		\dot{x} = \tilde{A}(\theta)x,\\
		\dot{\theta} = \omega,
	\end{cases}
\end{align}
where $\tilde{A}:\mathbb{T}^d \rightarrow \mathfrak{g}$ ($\mathfrak{g}$ is the Lie algebra of $G$), and $\omega \in \mathbb{T}^d$ is rationally independent. Poincar\'e time-$1$ map of \eqref{loc.cont-1} is in fact a quasi-periodic cocycle on $G$. Naturally, conjugation and in particular almost reducibility results of the corresponding cocycle is closely related to the ones of the original linear system \cite{YZ13}, cf. section~\ref{change.to.lin.s.sect}.

Let the frequency $\alpha \in \mathbb{T}^d$ be Diophantine and $A \in C^{\omega}(\mathbb{T}^d, \SL(2,\mathbb{R}))$. In 1975, Dinaburg-Sinai \cite{DS75} used the KAM approach to prove that for a typical one-parameter family of cocycles close to a constant, reducible cocycles are of positive measure. Subsequently, Eliasson \cite{E92} made a significant breakthrough by proving, under the same assumption, that reducible cocycles are of full measure, through an essential improvement of the KAM approach. By adapting and improving Eliasson's approach, Krikorian \cite{K99a} extended the full-measure reducibility result in \cite{E92} to the cases of semi-simple compact Lie groups,
and was further generalized to $\GL(s,\R)$ cocycles  quite recently by Wang-Xu-You-Zhou \cite{WXYZ}. 
All of these results hold for smooth or finitely differentiable cases, regardless of whether we consider linear systems or cocycles \cite{Am09, CCYZ19, FK09, K99b}.

However, the Diophantine condition on frequencies in local problems can be relaxed when considering one-frequency cocycles or two-frequency linear systems on $\SL(2,\mathbb{R})$, as shown in  \cite{A3, AFK11,HY12}. Both of these articles developed non-standard KAM techniques to deal with conjugation problems. Although the method developed in \cite{HY12} only applies to linear systems, the result still holds for cocycles due to the local embedding theorem \cite{YZ13}.

One may inquire whether any conclusions can be derived regarding conjugation for cocycles or linear systems that are not necessarily close to constants (referred to as the global case). In the case of one-frequency cocycles, global results can be anticipated.
In \cite{Kri02}, Krikorian proved a profound result stating that for a recurrent Diophantine frequency, $C^\infty$-reducible smooth cocycles valued in $\SU(2)$ are $C^\infty$-dense. Moreover, Krikorian proved that for a recurrent Diophantine frequency, non-reducible smooth cocycles on $\SU(2)$ can be divided into countable conjugation classes. In fact, the proof in \cite{Kri02} indicates the existence of a quantified conjugation-invariant quantity, which is precised later by Fraczek \cite{Fra04}.   In the case of $\SL(2,\mathbb{R})$, Avila-Krikorian \cite{AK06} proved that given recurrent Diophantine frequency, for almost every energy, the corresponding Schr\"odinger cocycle is either $C^{\omega}$-reducible or non-uniformly hyperbolic, based on Kotani theory \cite{Ko84,Si83} and the renormalization scheme introduced in \cite{Kri02}.

Furthermore, several interesting results have been established in \cite{Ka15,Ka17,Ka18,AKP}, which include topics such as degree, reducibility, almost-reducibility, non-reducibility, and spectrum of one-frequency cocycles on semi-simple compact groups. Avila's global theory and accelerations in \cite{A15,AJS14} for analytic one-frequency cocycles provide new insights into understanding global conjugation problems.

It is important to note that most of the conclusions about cocycles on semi-simple compact groups, whether local or global, require Diophantine or recurrent Diophantine conditions. In this paper, we aim to examine the relations among accelerations, dynamical degree, and almost-reducibility for analytic cocycles over any irrational frequency. We firstly recall the convergence of renormalization proved in\cite{AKP, P23}. Then by proving the preservation of acceleration along renormalization and using the explicit form of limit of renormalization, we deduce Theorem \ref{thm.ac.1} concerning coincidence of acceleration vector and vector of dynamical degree. As for Theorem \ref{thm.ac.2}, if $\omega_1=0$, we conclude by showing a local almost-reducibility result over any irrational frequency. We prove such result using the KAM scheme developed by Hou-You \cite{HY12} and the local embedding theorem \cite{YZ13}, which are applicable only to the analytic case. On the other hand, if $\omega_1\neq0$, we prove conjugation result through an adapted KAM scheme developed by Krikorian \cite{K01} for $\SU(2)$-cocycles. A technical point in the function of this KAM scheme is the estimates on length, which were developed in \cite{K01,Ka15} for compact cocycles. Since accelerations can only be defined for analytic cocycles, Corollary~\ref{thm.ac.3} holds only for the analytic case.

Finally, let us mention that the dynamics of cocycles has received significant attention from researchers, with a focus on the conjugation problems and their classifications. These questions are not only important in the field of dynamical systems, but also in mathematical physics, where they have wider applications in the spectral theory of Schr\"odinger operators \cite{A3,A15,AJM,AKL,AYZ, GY,MJ, You}.


\section*{Acknowledgement}
X. Hou is partially  supported by NNSF of China (Grant 12071083) and Funds for Distinguished Youths
of Hubei Province of China (2019CFA680). Y. Pan is supported by ERC Advanced Grant (\#885707).  Q. Zhou is partially supported by   National Key R\&D Program of China (2020YFA0713300), NSFC grant (12071232)  and Nankai Zhide Foundation.

\section{Preliminary}

\subsection{Lie group and Lie algebra}
Let
$$\SO(s,\R):=\{X\in\M(s,\R)~:~X^TX=XX^T=I\},$$
$\fso(s,\R)$ be the Lie algebra of $\SO(s,\R)$, namely
$$\fso(s,\R):=\{X\in\M(s,\R)~:~X=-X^T\}.$$
For a matrix $D\in\fso(s,\R)$, its {\it centralizer} $\fz(D)$ is defined to be
$$\fz(D)=\{C\in\fso(s,\R)~|~DC=CD\}.$$

In this article, we mainly work on $\SO(3,\R)$ and $\fso(3,\R)$. As for the Lie algebra $\fso(3,\R)$, it has a basis
$$J_1=\bm0&1&0\\-1&0&0\\0&0&0\em,
J_2=\bm0&0&1\\0&0&0\\-1&0&0\em,
J_3=\bm0&0&1\\0&0&0\\-1&0&0\em,$$
satisfying
$$[J_1,J_2]=J_3,\quad [J_2,J_3]=J_1,\quad [J_3,J_1]=J_2.$$
Then we have the following observation.
\begin{lem}
\label{cen}
For $D\in\fso(3,\R)\backslash\{0\}$, its centralizer is the $1$-dimensional linear subspace generated by $D$.
\end{lem}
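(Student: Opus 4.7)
The plan is to exploit the $\SO(3,\R)$-equivariance of the centralizer to reduce to a normal form, then compute directly. Note first that $\R D\subset\fz(D)$ is immediate, so it suffices to show $\dim\fz(D)\leq 1$. If $O\in\SO(3,\R)$, then $C\mapsto OCO^{-1}$ gives a linear isomorphism $\fz(D)\to\fz(ODO^{-1})$, and it maps the line $\R D$ to $\R(ODO^{-1})$. Hence the statement is invariant under $\SO(3,\R)$-conjugation of $D$.

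By the spectral theorem for real skew-symmetric matrices, every nonzero $D\in\fso(3,\R)$ is $\SO(3,\R)$-conjugate to $\lambda J_1$ for some $\lambda\in\R\setminus\{0\}$ (the single nonzero eigenvalue pair $\pm i\lambda$ gives the $2\times 2$ rotation block, and the remaining direction is the kernel). So I may assume $D=\lambda J_1$. Write an arbitrary $C\in\fso(3,\R)$ in the basis as $C=aJ_1+bJ_2+cJ_3$. Using the bracket relations $[J_1,J_2]=J_3$ and $[J_3,J_1]=J_2$ (hence $[J_1,J_3]=-J_2$), one computes
\[
[D,C]=\lambda\bigl(b[J_1,J_2]+c[J_1,J_3]\bigr)=\lambda\bigl(bJ_3-cJ_2\bigr).
\]
Since $\lambda\neq 0$ and $J_2,J_3$ are linearly independent, $[D,C]=0$ forces $b=c=0$, i.e.\ $C=aJ_1\in\R D$. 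Combined with the reverse inclusion, $\fz(D)=\R D$.

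There is no real obstacle here: the only thing to be careful about is the normal form step, which relies on $\fso(3,\R)$ being the Lie algebra of a compact group so that the adjoint action on itself has only one orbit type up to scaling (equivalently, skew-symmetric real matrices are orthogonally block-diagonalizable). An equally clean alternative would be to invoke the classical isomorphism $(\fso(3,\R),[\cdot,\cdot])\cong (\R^3,\times)$ under which $[D,C]=0$ becomes $v_D\times v_C=0$, and conclude $v_C\parallel v_D$ since $v_D\neq 0$.
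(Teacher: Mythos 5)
Your proof is correct. The paper does not actually supply an argument for this lemma; it states it as an observation, immediately after recording the bracket relations and remarking that every $A\in\fso(3,\R)$ is $\SO(3,\R)$-conjugate to $\|A\|J_1$ — which is precisely the normal-form reduction you carry out. Your direct bracket computation from $[J_1,J_2]=J_3$, $[J_3,J_1]=J_2$ then finishes the job, and the side remark about $(\fso(3,\R),[\cdot,\cdot])\cong(\R^3,\times)$ is a valid one-line alternative. No gap.
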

Besides, it is easy to see that for $A\in\fso(3,\R)$, it has eigenvalue $\{||A||i,-||A||i,0\}$ and there exists $B\in\SO(3,\R)$ such that
$$B^TAB=||A||J_1.$$

\subsection{Continued fraction expansion}
\label{sect.alpha}
Define as usual for $0<\alpha<1$,
\begin{equation*}
a_0=0, ~~\alpha_0=\alpha,\quad q_0=1,\quad p_0=0,\quad q_{-1}=0,\quad p_{-1}=1,
\end{equation*}
and inductively for $n\geq1$,
\begin{equation*}
a_n=[\alpha_{n-1}^{-1}],~~\alpha_n=\alpha_{n-1}^{-1}-a_n=G(\alpha_{n-1})=\{\frac{1}{\alpha_{n-1}}\},
\end{equation*}
\be
\label{def.best.approx}
p_n=q_np_{n-1}+p_{n-2},\quad q_n=a_nq_{n-1}+q_{n-2},
\ee
where $[\cdot]$ denotes the integer part, $\{\cdot\}$ denotes the fractional part and $G$ is the Gauss map $G(x)=\{x^{-1}\}$. Let
$$
\beta_n=\Pi_{j=0}^n\alpha_j,
\quad
U(x)=
\left(\begin{matrix}0&1\\1&-[x^{-1}] 
\end{matrix}\right).$$
Define
\begin{equation*}
Q_0=I,\quad Q_n=U(\alpha_{n-1})\cdots U(\alpha_0).
\end{equation*}
Then \eqref{def.best.approx} shows that
\begin{equation*}
Q_n=(-1)^n\left(\begin{matrix}q_n&p_n\\q_{n-1}&p_{n-1}
\end{matrix}\right).
\end{equation*}
And we have
\begin{equation*}
\beta_n=(-1)^n(q_n\alpha-p_n)=\frac{1}{q_{n+1}+\alpha_{n+1}q_n},
\end{equation*}
\begin{equation*}
\frac{1}{q_{n+1}+q_n}<\beta_n<\frac{1}{q_{n+1}}.
\end{equation*}

\subsection{Renormalization of actions and cocycles}
\label{sect.Z2}
$\mathbb{Z}^2$-actions and renormalization of actions will be a fundamental tool in this paper. We recall here, following \cite{AK06}, $\Z^2$-action and the scheme of renormalization introduced in \cite{Kri02}. In particular, we focus on analytic case for $\SO(s,\R)$, $s\geq3$.

 Denote  by  $\Omega^\omega=\R\times C^\omega(\R,\SO(s,\R))$ the group composed of skew-product diffeomorphisms $(\alpha, A):\R\times \SO(s,\R)\to\R\times \SO(s,\R),$
\be
(\alpha, A)(x,v)=(x+\alpha, A(x)v),
\ee
with the composition being the group operation. An \textit{analytic fibered $\Z^2$-action} is a homomorphism $\Phi:\Z^2\to\Omega^\omega$. We use $\Lambda^\omega$ to denote the space of  analytic  fibered $\Z^2$-actions, endowed with the pointwise topology.

For two elements in $\Omega^r$, $(\a_1,A_1), (\a_2,A_2)$, they form a {\it commuting pair}, denoted by $\bm(\a_1,A_1)\\(\a_2,A_2)\em$, if they are commuting with each other, i.e.,
$$A_1(x+\a_2)A_2(x)=A_2(x+\a_1)A_1(x).$$
As $\Phi(1,0)$ and $\Phi(0,1)$ determine $\Phi$ and commute with each other by definition, they form a commuting pair. In below, we will not distinguish an $\Z^2$-action $\Phi$ and the corresponding commuting pair $(\bsm \Phi(1,0)\\\Phi(0,1)\esm)$.

For any $\Phi\in \Lambda^\omega$, we define $\gamma^\Phi_{n,m}:=\Pi_1\circ\Phi(n,m)\in\R$ and $A^\Phi_{n,m}:=\Pi_2\circ\Phi(n,m)\in C^\omega(\R,\SO(s,\R))$ for all $(n,m)\in\Z^2$, where
 $\Pi_1: \R\times C^\omega(\R,\SO(s,\R))\to\R$ and $\Pi_2: \R\times C^\omega(\R,\SO(s,\R))\to C^\omega(\R,\SO(s,\R))$ are coordinate projections. Let $\Lambda_0^\omega$  be the set of $\Phi\in\Lambda^\omega$ with $\gamma_{1,0}^\Phi=1$.

Notice that $C^\omega(\R,\SO(s,\R))$ acts on $\Omega^\omega$ by
$\Conj_B(\a,A(\cdot))=(\a,B(\cdot+\a)A(\cdot)B(\cdot)^{-1}).$
This action extends to an action, still denoted $\Conj_B$, on $\Lambda^\omega$. For $\Phi\in\Lambda^\omega$, the $\Conj_B(\Phi)$ is given by
$$\Conj_B(\Phi)(n,m)=(\gamma^\Phi_{n,m},B(\cdot+\gamma_{n,m}^\Phi)A_{n,m}^\Phi(\cdot)B(\cdot)^{-1}),\quad n,m\in\Z.$$
 We say that $\Phi,\Phi'\in\Lambda^\omega$ are \textit{$C^\omega$-conjugate} via $B\in C^{\omega}(\R,\SO(s,\R))$ if
$\Phi'=\Conj_B(\Phi)$ holds.

 We say that a $\Z^2$-action $\Phi$ is \textit{normalized} if $\Phi(1,0)=(1,I)$. In this case, $\Phi(0,1)=(\alpha, A)$ can be viewed as an analytic cocycle, since $A$ is automatically $1$-periodic. Conversely, given an analytic cocycle $(\alpha,A)$, we associate a normalized action $\Phi_{\alpha,A}$ by setting $
\Phi_{\alpha, A}(1,0)=(1,I),~~\Phi_{\alpha, A}(0,1)=(\alpha, A).
$

We cite the normalizing lemma from \cite{AK06}, whose proof remains true for $\Z^2$-actions valued in compact groups. We will provide a quantitative version of it in section~\ref{sect.5.2}.
\begin{lem}
\label{Normalization}
For  any $\Phi\in\Lambda_{0}^\omega$, $\Phi$ is $C^\omega$-conjugate to a normalized action.
\end{lem}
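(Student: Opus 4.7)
Writing $\Phi(1,0) = (1, A)$ with $A := A^\Phi_{1,0} \in C^\omega(\R, \SO(s,\R))$ (we have $\gamma^\Phi_{1,0} = 1$ since $\Phi \in \Lambda_0^\omega$), the plan is to find $B \in C^\omega(\R, \SO(s, \R))$ satisfying
\[
B(x+1) = B(x) A(x)^{-1}, \qquad x \in \R,
\]
so that $\Conj_B(\Phi)(1,0) = (1, I)$. Once this holds, writing $\Conj_B(\Phi)(0,1) = (\gamma^\Phi_{0,1}, \tilde A)$, the commutation with $(1, I)$ forces $\tilde A(x+1) = \tilde A(x)$, so $\tilde A$ is $1$-periodic and $\Conj_B(\Phi)$ is a normalized action. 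The lemma thus reduces to solvability of the above functional equation in $C^\omega(\R,\SO(s,\R))$.

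Geometrically, a solution $B$ corresponds to an analytic trivialization of the principal $\SO(s,\R)$-bundle over $\T = \R/\Z$ obtained from $\R \times \SO(s,\R)$ by identifying $(x, g)$ with $(x+1, A(x) g)$. Since $\SO(s,\R)$ is connected, principal $\SO(s,\R)$-bundles over $S^1$ are topologically (hence smoothly) trivial, being classified by $\pi_0(\SO(s,\R))$. A $C^\infty$ solution $B_\infty$ is obtained by choosing a smooth path in $\SO(s,\R)$ from $I$ to $A(0)^{-1}$ on $[0,1]$, with higher-order derivatives at the endpoints $0$ and $1$ prescribed so that the $\Z$-equivariant extension via $B(x+1) = B(x) A(x)^{-1}$ is smooth across every integer.

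To promote this smooth trivialization to an analytic one, I would use that $A$ extends holomorphically to a strip $\{|\Im z| < \delta\}$ with values in $\SO(s, \C)$. Two natural routes are available: (i) a KAM-type correction starting from an analytic approximation $B_0$ of $B_\infty$, iteratively solving for a small correction $B_0 \mapsto B_0 \exp(Y)$ that kills the error $B_0(x+1) A(x) B_0(x)^{-1} - I$, using the compactness of $\SO(s,\R)$ to control the iteration; or (ii) invoke an Oka–Grauert-type principle for holomorphic $\SO(s,\C)$-bundles over annular neighborhoods of $\T$ in $\C$, which are holomorphically trivial since they are topologically trivial, and then restrict the trivialization to $\R$.

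The main obstacle is this analytic upgrade; smooth triviality is elementary, whereas analyticity requires the holomorphic extension of $A$ together with a construction that leverages the compactness of the target group. Such a construction is carried out in \cite{AK06} for $\SU(2)$-valued cocycles, and as remarked in the excerpt, the proof there goes through unchanged for general compact Lie groups, in particular $\SO(s,\R)$; I would essentially import that argument, tracking the $\SO(s,\R)$-valued data throughout.
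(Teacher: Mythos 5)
Your proposal matches the paper's treatment: the paper offers no independent proof of this lemma, simply citing the normalizing lemma of \cite{AK06} and remarking that its proof carries over to $\Z^2$-actions valued in compact groups, which is exactly where your argument lands after the (correct) reduction to solving $B(x+1)=B(x)A(x)^{-1}$, the commutation argument forcing $1$-periodicity of the other generator, and the observation that the associated bundle over $\T$ is topologically trivial since $\SO(s,\R)$ is connected. One minor caution should you flesh out route (ii): a holomorphic trivialization of the $\SO(s,\C)$-bundle restricted to $\R$ is a priori only $\SO(s,\C)$-valued, so an additional correction is needed to land in $\SO(s,\R)$.
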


Let $\Phi\in\Lambda_0^\omega$, $B\in C^\omega(\R,\SO(s,\R))$ such that $\Conj_B(\Phi)$ is normalized. Then we call $B$ a \textit{normalizing map} of $\Phi$. The choice of $B$ may not be unique.

To define renormalization, we start with $\Z^2$-actions. We introduce the following maps.
\begin{enumerate}
\item Let $\lambda\neq0$. Define \textit{rescaling} $M_\lambda: \Lambda^\omega\to\Lambda^\omega$ by
\begin{equation}
M_\lambda(\Phi)(n,m):=(\lambda^{-1}\gamma_{n,m}^\Phi, A^\Phi_{n,m}(\lambda\cdot)),\quad\Phi\in\Lambda^\omega.
\end{equation}
\item Let $U\in\GL(2,\Z)$. Define \textit{base change} $U: \Lambda^\omega\to\Lambda^\omega$ by
\begin{equation}
U(\Phi)(n,m)=\Phi(n',m'), ~~\binom{n'}{m'}=U\cdot\binom{n}{m},\quad\Phi\in\Lambda^\omega.
\end{equation}
\end{enumerate}

We point out that these two operations are pairwise commutative. Besides, $U$ commute with $\Conj_B$ while $M_\lambda\circ \Conj_B=\Conj_{B(\lambda\cdot)}\circ M_\lambda.$

For $\Z^2$-action $\Phi\in\Lambda^\omega$, $n\in\N$, we define the \textit{$n$-th renormalization of $\Phi$ around $0$} to be
\begin{equation}
\cR^n(\Phi):=M_{\beta_{n-1}}\circ {Q_n}(\Phi).
\end{equation}

Based on the renormalization of $\Z^2$-action, we introduce renormalization of cocycles. For a $C^r$ cocycle $(\alpha,A)$, we consider renormlization on the related $\Z^2$-action $\Phi_{\alpha,A}$. We can also express renormalization explicitly by commuting pair:
$$\cR^n(\Phi_{\a,A})=M_{\beta_{n-1}}\circ(-1)^n\bm p_{n-1}&-p_n\\-q_{n-1}&q_n\em\bm (1,I)\\(\a,A)\em, \quad n\in\N.$$
As $\cR^n(\Phi_{\a,A})\in\Lambda_0^\omega$, by Lemma~\ref{Normalization}, there exists $B\in C^\omega(\R,\SO(s,\R))$ such that $\Conj_B\cR^n(\Phi_{\a,A})$ is normalized. We denote the commuting pair of this normalized $\Z^2$ action by $\bm(1,I)\\(\a_n,\ti A^{(n)})\em$. Since normalizing map is not unique, $\ti A^{(n)}$ is not unique either. We call $(\a_n,\ti A^{(n)})$ a \textit{representative of the $n$-th renormalization} of $(\a,A)$.

\medskip
A fundamental observation is that the conjugation relation is invariant under renormalizations. More precisely, we have the following result. The proof is relatively direct by definition, we refer to \cite[Lemma 4.4.1, Lemma 4.4.2]{P23} for example.
\begin{lem}
\label{lem.ren.conj}
1. Representative of renormalization is unique up to conjugation.\\
2. Two cocycles are conjugate to each other if and only if they have conjugate representatives of renormalization.
\end{lem}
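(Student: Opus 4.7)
The plan is essentially bookkeeping. Combining the two commutation identities $U\circ \Conj_B=\Conj_B\circ U$ and $M_\lambda\circ \Conj_B=\Conj_{B(\lambda\cdot)}\circ M_\lambda$ recorded in the excerpt, one obtains $\cR^n\circ \Conj_B=\Conj_{B(\beta_{n-1}\cdot)}\circ \cR^n$, and the analogous identity with $\cR^n$ replaced by its inverse $(\cR^n)^{-1}=Q_n^{-1}\circ M_{\beta_{n-1}^{-1}}$ (and $\beta_{n-1}$ by $\beta_{n-1}^{-1}$). The second key tool is the elementary observation that if $\Conj_C$ sends a $\Z^2$-action whose first generator is $(1,I)$ to another such action, then $\Conj_C(1,I)=(1,C(\cdot+1)C(\cdot)^{-1})=(1,I)$ forces $C$ to be $1$-periodic, i.e.\ an element of $C^\omega(\T,\SO(s,\R))$.

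For part~1, suppose $B_1,B_2\in C^\omega(\R,\SO(s,\R))$ are two normalizing maps of $\cR^n(\Phi_{\a,A})$, yielding representatives $(\a_n,\tilde A_i^{(n)})$, $i=1,2$. Then $\Conj_{B_1B_2^{-1}}$ carries the normalized action $\Conj_{B_2}\cR^n(\Phi_{\a,A})$ to $\Conj_{B_1}\cR^n(\Phi_{\a,A})$, both normalized, so the previous observation forces $B_1B_2^{-1}\in C^\omega(\T,\SO(s,\R))$. Reading the identity on the $(0,1)$ generator then gives $\tilde A_1^{(n)}(\cdot)=(B_1B_2^{-1})(\cdot+\a_n)\,\tilde A_2^{(n)}(\cdot)\,(B_1B_2^{-1})(\cdot)^{-1}$, exactly a $C^\omega$-conjugation of cocycles.

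For part~2, the forward direction uses that a $C^\omega$-conjugation $B$ between $(\a,A_1)$ and $(\a,A_2)$ gives $\Phi_{\a,A_2}=\Conj_B(\Phi_{\a,A_1})$, so by the commutation rule $\cR^n(\Phi_{\a,A_2})=\Conj_{B(\beta_{n-1}\cdot)}\cR^n(\Phi_{\a,A_1})$; choosing normalizing maps $B_i$ for each side and applying part~1 to the two different normalizings of the right-hand action shows that $\tilde A_1^{(n)}$ and $\tilde A_2^{(n)}$ are $C^\omega$-conjugate. Conversely, a $1$-periodic $C$ conjugating the representatives also conjugates the whole normalized $\Z^2$-actions (since $\Conj_C(1,I)=(1,I)$), so $\cR^n(\Phi_{\a,A_1})=\Conj_{B_1^{-1}CB_2}\cR^n(\Phi_{\a,A_2})$; applying $(\cR^n)^{-1}$ yields $\Phi_{\a,A_1}=\Conj_{\tilde B}(\Phi_{\a,A_2})$ with $\tilde B(x):=(B_1^{-1}CB_2)(\beta_{n-1}^{-1}x)$. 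Evaluating this identity on $(1,0)$ automatically forces $\tilde B$ to be $1$-periodic while evaluation on $(0,1)$ delivers the required relation $A_1(\cdot)=\tilde B(\cdot+\a)A_2(\cdot)\tilde B(\cdot)^{-1}$. No serious obstacle arises; the only delicate point is the last step, where the $1$-periodicity of $\tilde B$ is not an additional hypothesis but is enforced automatically by the shared first generator $(1,I)$ of the two $\Z^2$-actions $\Phi_{\a,A_i}$.
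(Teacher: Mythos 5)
Your proof is correct and follows exactly the direct-by-definition route the paper indicates (the paper itself only remarks that the proof is "relatively direct by definition" and defers details to Lemmas 4.4.1--4.4.2 of \cite{P23}). The two ingredients you isolate --- the commutation rule $\cR^n\circ\Conj_B=\Conj_{B(\beta_{n-1}\cdot)}\circ\cR^n$ (with its inverse version) and the observation that a conjugacy between normalized actions must be $1$-periodic because $\Conj_C(1,I)=(1,C(\cdot+1)C(\cdot)^{-1})$ --- are precisely the mechanism the reference uses, and your bookkeeping with $\Conj_{B_1}\circ\Conj_{B_2}=\Conj_{B_1B_2}$ is sound throughout.
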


Now we talk about the convergence of renormalization. As compact cocycle is a special case of cocycles $L^2$-conjugation to rotation, by \cite{AKP,P23}, we have the following results.
\begin{thm}
\label{sect4.0.2}
Let  $\alpha\in\R\backslash\Q$, $A\in C^\omega_h(\mathbb{T},\SO(s,\R))$. There exist $D\in\fso(s,\R)$ satisfying $\exp(D)=I$, a sequence of representatives of renormalization $(\alpha_n, \tilde{A}^{(n)})$,  $C_n\in\fz(D)$, $\ph_n\in C_h^\omega(\T,\fso(s,\R))$ such that
\begin{equation}
\label{lim.ren.thm}
\tilde{A}^{(n)}(x)=\exp(\ph_n(x))\exp((-1)^n Dx+C_n),
\end{equation}
satisfying the estimate
$$\lim_{n\to\infty}||\ph_n||_h=0.$$
\end{thm}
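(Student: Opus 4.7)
The plan is to follow the renormalization strategy of Krikorian, adapted to analytic compact cocycles as in \cite{AKP, P23}. The backbone is the dynamical degree $D$ produced by applying von Neumann's ergodic theorem to the unitary operator $\mathcal{A}_\alpha$ on $L^2(\T, \fso(s,\R))$: this yields the $L^2$-limit in \eqref{L2-limit} and the invariance $A(x)^{-1} D(x+\alpha) A(x) = D(x)$ a.e. After a measurable conjugation putting $D(x)$ into a fixed algebraic normal form, I may assume $D$ is a constant element of $\fso(s,\R)$; invoking the quantization of the dynamical degree from \cite{AKP, P23}, its eigenvalues lie in $2\pi i \Z$, so $\exp D = I$.

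Next, I would expand $\cR^n(\Phi_{\alpha, A}) = M_{\beta_{n-1}} \circ Q_n(\Phi_{\alpha, A})$ using the explicit commuting-pair formula recalled above. The second generator of $Q_n(\Phi_{\alpha, A})$ is built from the $q_n$-th iterate of $(\alpha, A)$; the convergence $A_{q_n}^{-1}\partial A_{q_n}/q_n \to D$ in $L^2$, combined with the rescaling factor $\beta_{n-1}\sim 1/q_n$, forces a linear exponential term of shape $\exp((-1)^n Dx + \cdot)$ in the appropriate coordinate, the sign coming from $\det Q_n = (-1)^n$. Applying Lemma~\ref{Normalization} produces a normalized representative $(\alpha_n, \tilde A^{(n)})$; the remaining freedom in the normalizing map is precisely that of right-multiplying by a constant commuting with $D$, so the constant exponent $C_n$ can be arranged to lie in $\fz(D)$.

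The delicate step, and the main obstacle, is upgrading the $L^2$-convergence to uniform analytic convergence $\|\varphi_n\|_h \to 0$ on a strip of fixed positive width. A priori the rescaling $M_{\beta_{n-1}}$ shrinks the strip of analyticity, so $\tilde A^{(n)}$ could fail to be analytic on a uniform strip. To resolve this I would invoke the length estimates for compact-group-valued cocycles developed in \cite{K01, Ka15}: compactness of $\SO(s,\R)$ together with Denjoy-Koksma-type bounds yields uniform $C^0$ control of iterates along long Birkhoff orbits, from which Cauchy estimates produce uniform analyticity on a slightly thinner but still positive-width strip. Combining this uniform analytic bound with the $L^2$-smallness of $\varphi_n$ via a standard interpolation argument then gives the required analytic decay, completing the proposed proof.
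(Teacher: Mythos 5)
The paper does not give a proof of Theorem~\ref{sect4.0.2}: it cites the result directly from \cite{AKP,P23} (noting that a compact cocycle is a special case of a cocycle $L^2$-conjugate to rotations), and points to the lineage in \cite{K01,Fra04,AK06,AK15,Ka15}. So there is no internal proof to compare against. What I can do is assess your sketch on its own terms.

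Your high-level outline (dynamical degree via von Neumann, quantization, expansion of $\cR^n(\Phi_{\alpha,A})$ into a commuting pair, normalizing lemma to get a representative, then upgrading $L^2$-convergence to $C^\omega_h$-convergence) matches the architecture of the cited proofs. Two remarks on the early steps: passing from the $L^2$-function $D(\cdot)$ to a constant $D\in\fso(s,\R)$ requires a genuine measurable-selection/conjugation argument, not just ``putting $D(x)$ into normal form''; and invoking quantization of the dynamical degree is slightly circular, since in \cite{AKP,P23} quantization is an \emph{output} of the convergence of renormalization rather than an independent input.

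The real gap is the final step, and as written it would not go through. You claim that compactness of $\SO(s,\R)$ together with Denjoy--Koksma bounds yields uniform $C^0$ control of iterates, from which ``Cauchy estimates produce uniform analyticity on a slightly thinner strip.'' But compactness only bounds $A_{q_n}$ on the \emph{real} circle; on a complex strip $\{|\Im z|\le h\}$ the values $A(x+iy)$ are no longer in $\SO(s,\R)$, and the product $A_{q_n}(x+iy)$ can a priori grow exponentially in $q_n$. Cauchy estimates convert bounds on a complex domain into derivative bounds on a smaller real set, not the other way around; a real $C^0$ bound gives no analyticity information whatsoever. The length estimates in \cite{K01,Ka15} are designed for the $C^\infty$ topology (they bound $C^k$-norms of derivatives along the circle), and do not by themselves produce a fixed-width strip of analyticity for the renormalizations. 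What is actually needed — and what makes the $C^\omega$ version in \cite{AK06,AK15,P23} substantially harder than the $C^\infty$ one — is a complex a priori bound on $A_{q_n}$ on a strip whose width after the rescaling $M_{\beta_{n-1}}$ remains $\ge h$, together with a compactness/normal-family argument in the analytic category to extract the limit and identify $\varphi_n$. Your proposal names the right ingredients from the literature but the mechanism you describe for the $L^2\Rightarrow C^\omega_h$ upgrade is not correct, and that step is the heart of the theorem.
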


This means  representatives of renormalization $(\alpha_n, \tilde{A}^{(n)})$ converges to a precise normal form. This kind of normal form result was firstly stated for $\SU(2)$-cocycles in the $C^{\infty}$-topology \cite{K01,Fra04}, and then for $\SL(2,\mathbb{R})$-cocycles in the $C^\infty$ and $C^\omega$-topology \cite{AK06,AK15}. It is also generalized to quasi-periodic coycles valued in semi-compact Lie groups in the $C^\infty$-topology \cite{Ka15}.

\section{Local results I. close to constant}
To prove the conjugation result in Theorem~\ref{thm.ac.2}, by the preservation of conjugation along renormalization, it can be achieved by conjugating renormalizations. Moreover, by the convergence of renormalization given in Theorem~\ref{sect4.0.2}, we will fall into a neighborhood of some normal form after deep enough renormalization. Therefore we only need to study conjugation in local case, namely close to some normal form $(\a,\exp(Dx+C))$, where $\exp(D)=I$, $c\in\fz(D)$. We will deal with $D=0$ in this section and $D\neq0$ in the next section. 

\smallskip

In this section, we will prove that any one-frequency $\SO(3,\R)$-cocycle close to constant is $C^\omega$-almost-reducible. 

We start with introducing some notations. Let $h>0$, $d\in\N^*$. We define the strip
$$\T_h^d:=\{\theta=(\theta_1,\cdots,\theta_d)\in \C^d\, \big|\, |\Im \theta_1|+\cdots +|\Im \theta_d|\leq h\}/\Z^d.$$
We denote by $C_h^\omega(\T^d,*)$ the space of all maps $F:\T^d\to*$ admitting an analytic extension on $\T_h^d$, equipped with the norm
$$ \|F\|_h := \sup_{\theta\in \T^d_h}\|F(\theta)\|,$$ 
where $*$ could be $\R$, $\C$, $\SO(3,\R)$ or $\fso(3,\R)$.
Then $C^\omega(\T^d,*):=\bigcup_{h>0}C_h^\omega(\T^d,*)$ becomes the set of all analytic  $*$-valued function on $\T^d$. 

Our main result in this section is the following.

\begin{thm}
\label{thm.loc.dis}
Let $\a\in\R\backslash \Q$, $h>0$, $L\geq 1$,  $C\in\fso(3,\R)$.    There exists $\delta=\delta(h,L)$, such that for any $A\in C_h^\omega(\T,\SO(3,\R))$,
if  $||A- \exp(C)||_h<\delta$, then $(\a,A)$ is  $C^\omega$-almost-reducible, in the sense that,  there exist $h_n>0$,  $B_n\in C_{h_n}^\omega(\T,\SO(3,\R))$, $n\in\N^*$, such that
\be\label{est.-thm.loc.dis}
\lim_{n\rightarrow \infty} \|A_n-\exp(C_n)\|_{h_n}\|B_n\|_{h_n}^L=0,
\ee
 for $A_n:= B_n(\cdot+\a)AB_n^{-1}$ and some $C_n\in  \fso(3,\R)$, $n\in\N^*$.
\end{thm}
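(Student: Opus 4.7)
\medskip

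\noindent\textbf{Proof proposal.} My plan is to reduce the problem from cocycles to continuous quasi-periodic linear systems, apply a Hou--You style non-standard KAM scheme on the linear system side without requiring any arithmetic condition on $\alpha$, and then translate back. By the local embedding theorem of You--Zhang \cite{YZ13}, if $\|A-\exp(C)\|_h$ is sufficiently small, the cocycle $(\alpha,A)$ is $C^\omega$-conjugate to the time-$1$ map of an analytic quasi-periodic linear system $\dot{X}=(C+F(\theta))X,\ \dot\theta=\alpha,$ with $F\in C_{h'}^\omega(\T,\fso(3,\R))$ small (where $h'$ is slightly smaller than $h$). Since all group-valued estimates translate into $\fso(3,\R)$-valued ones with controlled constants, it suffices to establish the analogous almost-reducibility statement for the linear system.

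The core of the argument is then an iterative KAM construction in the spirit of \cite{HY12}. At each stage I work with a system $\dot X=(C_n+F_n(\theta))X$ on a strip of width $h_n$, and look for a change of variable $X=\exp(Y_n(\theta))\widetilde X$ with $Y_n\in C_{h_n}^\omega(\T,\fso(3,\R))$ solving the truncated cohomological equation
\[
\alpha\,\partial_\theta Y_n(\theta)-[C_n,Y_n(\theta)]=F_n^{\mathrm{nr}}(\theta)-\widehat{F_n^{\mathrm{nr}}}(0),
\]
where $F_n^{\mathrm{nr}}$ denotes the projection of $F_n$ onto Fourier modes whose denominators $2\pi i k\alpha-\mathrm{ad}(C_n)$ are not too small, truncated above a cutoff $N_n$. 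The resonant part (small denominators together with the mean) is absorbed into the new constant $C_{n+1}$ via conjugation by an $\SO(3,\R)$-valued matrix constructed from $\widehat{F_n^{\mathrm{nr}}}(0)$ and the resonant modes; here Lemma \ref{cen} is important because the $1$-dimensional centralizer of any nonzero element of $\fso(3,\R)$ forces the resonance structure to be one-dimensional, so the new $C_{n+1}$ remains in a controlled subspace. The non-resonant equation admits an analytic solution on a slightly thinner strip, with standard small-divisor estimates of the form $\|Y_n\|_{h_n-\rho_n}\leq \rho_n^{-\tau_n}\|F_n\|_{h_n}$, and the new error satisfies the quadratic-type estimate $\|F_{n+1}\|_{h_{n+1}}\lesssim \|Y_n\|^2+\|F_n\|_{h_n}e^{-N_n\rho_n}$.

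The novelty of the Hou--You scheme, compared with the Diophantine case, is that the cutoffs $N_n$, strip losses $\rho_n$ and allowed denominator thresholds are selected \emph{adaptively} from the continued fraction denominators $q_n$ of $\alpha$ recalled in Section~\ref{sect.alpha}, so that only finitely many true resonances occur in each block between consecutive $q_n$'s. This is exactly what allows one to run KAM for arbitrary irrational $\alpha$, and in compact groups there is no risk of hyperbolicity-driven breakdown, which is the only feature of \cite{HY12} that has to be checked separately. Iterating gives a sequence $B_n:=\exp(Y_1)\cdots\exp(Y_n)$ on strips $h_n\downarrow h_\infty>0$ (or $\downarrow 0$ sufficiently slowly), together with constants $C_n$ such that $A_n=B_n(\cdot+\alpha)AB_n(\cdot)^{-1}=\exp(C_n)+O(\|F_n\|_{h_n})$. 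The polynomial-in-$\|B_n\|$ control in \eqref{est.-thm.loc.dis} is obtained exactly as in the compact analogue of \cite{HY12}: at each step one verifies $\|Y_n\|_{h_n}\|B_{n-1}\|^{L+1}\ll 1$ before applying the transformation, guaranteeing $\|F_{n+1}\|_{h_{n+1}}\|B_{n+1}\|^L\to 0$.

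The main obstacle will be the bookkeeping in the adaptive step: ensuring that the increments $C_{n+1}-C_n$ absorbed from the resonant modes remain small enough that the new non-resonant thresholds at stage $n+1$ are still governed by denominators of comparable size, so that the induction closes on an arbitrary irrational $\alpha$. The compactness of $\SO(3,\R)$ together with the one-dimensional centralizer given by Lemma \ref{cen} makes this bookkeeping substantially cleaner than in the $\SL(2,\R)$ case, since conjugating the constant part stays bounded and no splitting of eigenvalues into hyperbolic/elliptic regimes needs to be tracked.
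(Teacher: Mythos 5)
Your high-level strategy is the one the paper follows: pass to a quasi-periodic linear system via the local embedding theorem of \cite{YZ13}, run a Hou--You-type non-standard KAM scheme on the linear side, and convert the resulting almost-reducibility back to the cocycle. However, two of the essential technical ingredients are described incorrectly or are absent, and these are exactly the points that make the scheme work for \emph{every} irrational $\alpha$.

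First, the embedding target is wrong. You write a system $\dot X=(C+F(\theta))X,\ \dot\theta=\alpha$ on a $1$-dimensional base. The paper (following \cite{YZ13}) embeds into a system on $\T^2$ with frequency $\omega=(\alpha,1)$. This is not cosmetic: the small-divisor structure the whole scheme rests on is the observation (Lemma~\ref{q-Lem}) that the near-resonant frequencies $k\in\Z^2$ with $|\langle k,\omega\rangle|$ small and $|k|$ bounded all lie on the rank-one sublattice $\{l(q_n,-p_n):l\in\Z\}$. With a $1$-dimensional base you lose this lattice geometry and with it the mechanism that controls resonances for arbitrary irrational frequency.

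Second, and more seriously, your description of how the resonant part is removed — ``absorbed into the new constant $C_{n+1}$ via conjugation by an $\SO(3,\R)$-valued matrix constructed from $\widehat{F_n^{\mathrm{nr}}}(0)$ and the resonant modes'' — describes an Eliasson-type resonance-removal step, which is not what the paper does and which in general requires a Diophantine or Brjuno condition to close. In the paper the non-resonant part is killed first (Lemma~\ref{one-step-KAM}), leaving $H\in\cB_h^{(re)}$; then a \emph{rotation} $Q(\theta)=\exp(-2\pi\langle k_*,\theta\rangle J_1)$ (Lemma~\ref{rot-Lem}) shifts the resonant $J_2,J_3$-modes so that, after truncation at $|k|<q_+/6$, the entire remaining perturbation is supported on the lattice $l(q,-p)$, i.e., is a function of the single variable $q\theta_1-p\theta_2$. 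It is therefore periodic, and \emph{Floquet theory} (Lemma~\ref{F-Lem}) conjugates it to a genuine constant, with a quantitative bound $\|B_1\|_h\le\exp\{(|q|+|p|)h\,\|F\|_h^\#/|\tau|\}$ that is then balanced against the smallness of the tail $\mathcal{R}_{q_+/6}$. Nothing in your proposal plays the role of the rotation-plus-Floquet step, and without it the claim that the iteration ``closes for an arbitrary irrational $\alpha$'' is unsupported — you have not given a mechanism that eliminates the resonant block without appealing to arithmetic conditions. Finally, Lemma~\ref{cen} is not the ingredient that controls the resonance structure in this local scheme (that is the $J_1/J_2,J_3$ eigen-decomposition of $\mathrm{ad}(C)$ built into $\Lambda_1,\Lambda_2$); in the paper it is used in Section~\ref{sect.n0}, the $D\neq0$ case.
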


\begin{rem}
The perturbation  $\delta=\delta(h)$ does not depend on the frequency $\a$. Thus the result is of semi-local nature \cite{FK}.
\end{rem}

For technical reasons, we will not prove Theorem \ref{thm.loc.dis} directly. Instead we consider almost reducibility of linear systems. We will firstly formulate a parallel theorem to Theorem~\ref{thm.loc.dis} for linear systems and show that one can deduce Theorem~\ref{thm.loc.dis} from the parallel result in section~\ref{change.to.lin.s.sect}. Then we will work on almost reducibility of linear systems in the rest of this section.

\subsection{Change to almost reducibility of linear systems}
\label{change.to.lin.s.sect}
Let $\omega\in\R^2$, $A\in C^\omega(\T^2,\SO(3,\R))$. Consider the linear system
\be
\label{lin-sys}
\begin{cases}
    \dot{x}=A(\theta)x\\
    \dot{\theta}=\omega.
    \end{cases}
\ee
We say that $B\in C^\omega(\T^2, \SO(3,\R))$  conjugates (\ref{lin-sys}) to another linear system
\be
\label{lin-sys-conj}
\begin{cases}
    \dot{x}=A_+(\theta)x\\
    \dot{\theta}=\omega,
    \end{cases}
\ee
for some $A_+\in C^\omega(\T^2,\SO(3,\R))$, if
$$\partial_{\omega} B(\theta) =B(\theta)A(\theta)-A_+(\theta)B(\theta),$$
where 
$$\partial_{\omega} B(\theta_1,\theta_2):= \omega_1\frac{\partial}{\partial\theta_1}B(\theta_1,\theta_2)+\omega_2\frac{\partial}{\partial\theta_2}B(\theta_1,\theta_2),\quad \text{for }\omega=(\omega_1,\omega_2).$$

Denote by $\Phi\in C^\omega(\R\times\T^2, \SO(3,\R))$ the basic solution (which is in fact unique) of   (\ref{lin-sys}) that  satisfies
\begin{eqnarray*}
\frac{d}{dt}\Phi(t,\theta)=A(\theta+t\omega)\Phi(t,\theta), \quad \Phi(0,\theta)=I, \quad t\in\R,\, \theta\in \T^2.
\end{eqnarray*}
Let $\Phi_+$ be the basic solution of (\ref{lin-sys-conj}). Then $B$  conjugating (\ref{lin-sys}) to \eqref{lin-sys-conj} is equivalent to
\be
B(\theta+t\omega)\Phi(t,\theta)B(0,\theta)^{-1}=\Phi_+(t,\theta).
\ee

We claim the following almost reducibility result for linear system, whose proof will be given later.

\begin{thm}
\label{thm.loc.cont.1}
Let $\a\in(0,1)\backslash \Q$, $\omega=(\a,1)\in\R^2$, $C\in\fso(3,\R)$. For $h>0$ and $L\geq 1$, there exists $\delta=\delta(h,L)$  such that for any  $F\in C^\omega_h(\T^2,\fso(3,\R))$
 satisfying $||F||_h<\delta$, the system
\be
\label{loc.cont.1}
\begin{cases}
    \dot{x}=(C+F(\theta))x\\
    \dot{\theta}=\omega
    \end{cases}
\ee
is $C^\omega$-almost-reducible, in the sense that,  there exist $h_n>0$,  $B_n\in C_{h_n}^\omega(\T^2,\SO(3,\R))$, $n\in\N^*$, such that $B_n$ conjugates
(\ref{loc.cont.1}) to
\be\label{loc.cont.1-1} \begin{cases}
    \dot{x}=(C_n+F_n(\theta))x\\
    \dot{\theta}=\omega,
    \end{cases}\ee
for some $C_n\in\fso(3,\R), F_n\in C^\omega_{h_n}(\T^2,\fso(3,\R))$, $n\in\N^*$, and the following estimate holds
\be\label{est.-thm.loc.cont.1}
\lim_{n\rightarrow\infty} \|F_n\|_{h_n} \|B_n\|_{h_n}^L=0.
\ee
\end{thm}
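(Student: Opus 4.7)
The plan is to adapt the non-standard KAM scheme of Hou-You~\cite{HY12} for two-frequency linear systems on $\SL(2,\R)$ to the present compact-group setting, exploiting the compactness of $\SO(3,\R)$ so that the smallness threshold $\delta$ can be taken uniform in $\a$. Working with the continuous-time system on $\T^2$ rather than with the time-$1$ cocycle is convenient because the linearized conjugation equation is the first-order PDE
\[
\partial_\omega Y - \ad_C Y = F - \widehat F(0),
\]
whose small divisors are read directly from the pairing of $k_1\a+k_2$, $k\in\Z^2$, with the spectrum $\{0,\pm i\|C\|\}$ of $\ad_C$ on the complexification of $\fso(3,\R)$; these are controlled by the continued fraction approximations of $\a$ introduced in Section~\ref{sect.alpha}. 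The goal is to construct inductively $B_n\in C^\omega_{h_n}(\T^2,\SO(3,\R))$ conjugating the system to $\dot x=(C_n+F_n(\theta))x$ with $\|F_n\|_{h_n}\to 0$ super-polynomially while $\|B_n\|_{h_n}$ grows only polynomially in the relevant continued fraction denominator of $\a$.

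One KAM step, given data $(C,F)$ with $F$ small in $C^\omega_h(\T^2,\fso(3,\R))$, proceeds as follows. Truncate the Fourier series of $F$ at some order $N$, and split the truncation into a \emph{non-resonant block}, consisting of modes $k\in\Z^2$ satisfying $|2\pi(k_1\a+k_2)\pm\|C\||\geq\eta$ and $|k_1\a+k_2|\geq\eta$, and a \emph{resonant block}. The non-resonant block is eliminated mode-by-mode by solving the cohomological equation above, at the cost of a factor $\eta^{-1}$ and a moderate loss of analyticity strip. The resonant block cannot be removed without a Diophantine assumption; following~\cite{HY12}, I absorb it into the new constant part $C_+$ via a finite-dimensional base change whose norm is bounded by a power of $\eta^{-1}$. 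Here Lemma~\ref{cen}, on the one-dimensional centralizer of nonzero elements in $\fso(3,\R)$, plays an important role: it guarantees that the absorbed resonant contribution modifies $C$ only in the direction of the centralizer of $C$, so the updated $C_+$ is uniquely determined up to an explicit $\SO(3,\R)$-conjugation and no extra degeneracy appears in subsequent Lie-algebraic bookkeeping.

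The global iteration is organized around the continued fractions $(p_n/q_n)$ of $\a$: the cutoff parameters $(N_n,\eta_n)$ are chosen so that a resonance at step $n$ can occur only for a best approximation $q\leq N_n$ with $\|q\a-p\|\approx\|C_n\|/(2\pi)$. Between two consecutive resonant steps the scheme behaves as a classical KAM iteration and yields a super-linear gain such as $\|F_{n+1}\|_{h_{n+1}}\leq\|F_n\|_{h_n}^{3/2}$; at a resonant step $\|B_n\|_{h_n}$ may grow, but only polynomially in the associated denominator $q$. Compactness of $\SO(3,\R)$ enters essentially here, since no hyperbolic expansion of the conjugations can occur. The main obstacle will be the quantitative balance at resonant steps, namely establishing $\|F_{n+1}\|_{h_{n+1}}\leq\|F_n\|_{h_n}^{1+\sigma}$ together with $\|B_n\|_{h_n}\leq\|F_n\|_{h_n}^{-\sigma/L}$ for some fixed $\sigma>0$, uniformly in $\a$. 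This requires careful telescoping of the strip losses $h_n-h_{n+1}$ against the polynomial growth dictated by the gap ratios $q_{n+1}/q_n$; once this balance is achieved, \eqref{est.-thm.loc.cont.1} follows by summing a geometric series.
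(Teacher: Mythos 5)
Your plan correctly identifies the framework --- the non-standard KAM scheme of Hou--You~\cite{HY12} combined with the local embedding theorem --- and correctly singles out the uniform-in-$\alpha$ threshold and the interplay with the continued fraction denominators $q_n$ as the crux of the matter. However, the way you describe the handling of the resonant block does not match what actually works, and if carried out literally the scheme would fail.

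First, you claim that the resonant block is ``absorbed into the new constant part $C_+$ via a finite-dimensional base change whose norm is bounded by a power of $\eta^{-1}$.'' This is a mischaracterization of the HY12 mechanism. After removing the non-resonant part, the remaining resonant Fourier modes of the perturbation are supported on the line $(q,-p)\Z$ (this is the content of the small divisor lemma, Lemma~\ref{q-Lem}). These are \emph{not} killed by a constant change of basis: one first applies a $\theta$-dependent rotation $Q(\theta)=\exp\{-2\pi\langle k_*,\theta\rangle J_1\}$ to shift the non-zero resonant frequency $k_*$ to zero, and then one invokes quantitative Floquet theory (Lemma~\ref{F-Lem}) to reduce the resulting truncated system --- which is a genuine $\frac{1}{|q\alpha-p|}$-periodic ODE in one direction --- to a constant system $\dot x = C_+x$. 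The norms of both $Q$ and the Floquet conjugacy are \emph{not} powers of $\eta^{-1}$: they are of size $\exp(O(q_+h_+))$ and $\exp(O(q_+ q\,\epsilon^{1/4} h))$ respectively, controlled only because the analyticity loss $h-h_+$ is tuned against $q_+$ and because of the smallness inequality $e^{-\frac12 q_+ h}<\epsilon$. Without this $q_+$-dependence baked into the estimates, the inductive inequality $\epsilon_+\|B\|_{h_+}^{2L}\leq\epsilon^2 e^{-q_+ h_+}$ that drives the iteration cannot be obtained, and the constants do not telescope.

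Second, the role you attribute to Lemma~\ref{cen} (one-dimensional centralizer of a nonzero element of $\fso(3,\R)$) is not used here; it enters the argument only in Section~\ref{sect.n0}, in the nondegenerate-degree case. What is actually exploited in the present proof is the explicit spectral decomposition of $\ad_C$ on $\fso(3,\C)$, which lets one split $\cB_h$ into $\cB_h^{(nre)}\oplus\cB_h^{(re)}$ according to two resonance sets $\Lambda_1,\Lambda_2$ and write the resonant part as $fJ_1+\Re w\,J_2+\Im w\,J_3$ with $f$, $w$ supported on resonant modes; the one-dimensionality of $\fz(C)$ is irrelevant.

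Third, the convergence structure you propose --- a classical super-linear gain $\|F_{n+1}\|\leq\|F_n\|^{3/2}$ between resonant steps, and separate control of $\|B_n\|$ by $\|F_n\|^{-\sigma/L}$ --- is not the right bookkeeping here. The iteration is indexed by the denominators $q_n$ of $\alpha$ and splits at each step into a trivial step when $\epsilon_n\leq e^{-\frac12 q_{n+1}h_n}$ and a genuine KAM step otherwise; the inductive invariant is the combined quantity $\epsilon_n\bigl(\|B_n\|^\#_{h_n}\bigr)^{L}\leq\min\{(10L)^{-100L},(h_n)^{4(L+1)},e^{-\frac12 q_n h_n}\}$, not two separate estimates. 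Establishing exactly this coupled invariant, with $h_n$ decaying only by a fixed factor per KAM step, is what makes the scheme close even for Liouville $\alpha$; simply ``telescoping the strip losses against $q_{n+1}/q_n$'' does not by itself produce the cancellation needed.
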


To show that we can deduce Theorem~\ref{thm.loc.dis} from Theorem \ref{thm.loc.cont.1} directly, we need to use in addition the following local embedding theorem proved in \cite{YZ13}. It was proved for any Lie group $G$, and we will restrict on the case $\SO(3,\R)$.

\begin{thm}[\cite{YZ13}, Local embedding theorem]
\label{localemb}
Let $h>0$, $d\in\N^*$, $\mu\in \T^{d-1}$ with  $(\mu,1)$ rationally
independent,  $C\in \mathfrak{so}(3,\R)$, $H\in
C^\omega_{h}(\T^{d-1},\mathfrak{so}(3,\R))$. There exist
 $\epsilon=\epsilon(C,h,|\mu|)>0$,
$c=c(C,h,|\mu|)>0$ such that the quasi-periodic cocycle $(\mu,e^C
e^{H(\cdot)})$ can be analytically embedded into a quasi-periodic
linear system provided that $\|H\|_h<\epsilon$. More precisely,
 there
exists $F\in
C^\omega_{h/(1+|\mu|)}(\T^d,\mathfrak{so}(3,\R))$ such that $(\mu,e^C e^{H(\cdot)})$ is the Poincar\'{e} map of
\begin{eqnarray*}
\left\{ \begin{array}{l}\dot{x}=(C+F(\theta))x \\
\dot{\theta}=(\mu,1),
\end{array} \right.
\end{eqnarray*}
and the estimate holds
$$\|F\|_{h/(1+|\mu|)}\leq c \|H\|_h.$$
\end{thm}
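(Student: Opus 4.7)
My plan is to construct the embedding explicitly and then use an implicit function argument to enforce $\Z^d$-periodicity. Write the cocycle as $(\mu, e^{C} e^{H(\cdot)})$ with small $H$. The natural ansatz is the orbit-aligned coefficient $F(\theta) := e^{\theta_d C}\, H(\theta_{1:d-1} - \theta_d \mu)\, e^{-\theta_d C}$. A direct computation using the change of variable $x(t) = e^{tC} y(t)$ shows that along each orbit $(\theta' + t\mu, t)$ of the base flow on $\T^d$, the equation reduces to $\dot y = H(\theta')\, y$, so the time-$1$ Poincar\'e map equals $e^C e^{H(\theta')}$, exactly as demanded. Moreover, the appearance of the shift $\theta_{1:d-1} - \theta_d \mu$ with $\theta_d$ ranging over the unit interval already explains the factor $(1+|\mu|)^{-1}$ in the claimed loss of analyticity width.

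The obstruction is that this $F$ is not $\Z^d$-periodic: its boundary values $F(\cdot, 0) = H(\cdot)$ and $F(\cdot, 1) = e^C H(\cdot - \mu) e^{-C}$ do not match. To remedy this, I would set up the nonlinear operator $\cP\colon C^\omega_{h'}(\T^d, \fso(3,\R)) \to C^\omega_{h'}(\T^{d-1}, \SO(3,\R))$ sending a periodic $F$ to the Poincar\'e map of the linear system $\dot x = (C + F(\theta))x$, $\dot\theta = (\mu, 1)$. At $F = 0$ one has $\cP(0) = e^C$, and its Fr\'echet derivative is the explicit integral operator $D\cP(0)\delta F = e^{C}\int_0^1 e^{-tC}\, \delta F(\theta' + t\mu, t)\, e^{tC}\,dt$. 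Using Fourier expansion on $\T^d$ together with the rational independence of $(\mu, 1)$, one shows that $D\cP(0)$ admits a bounded right inverse with controlled loss of analyticity width. The analytic implicit function theorem on the Banach scale $\{C^\omega_{h'}\}$ then produces a unique small $F \in C^\omega_{h/(1+|\mu|)}(\T^d, \fso(3,\R))$ with $\cP(F) = e^C e^{H(\cdot)}$ and $\|F\|_{h/(1+|\mu|)} \leq c\,\|H\|_h$.

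The main obstacle is inverting $D\cP(0)$ in the analytic category with sharp control on the analyticity width. Potential small denominators stemming from nonzero eigenvalues of $\ad_C$ combined with resonances $k'\cdot\mu + k_d$ are manageable here because only a single inversion is required and no KAM iteration is needed, so a mild non-resonance argument suffices. This is the technical content of the construction in \cite{YZ13}, whose estimates I would invoke directly to close the argument.
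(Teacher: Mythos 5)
The paper does not prove this statement; it is quoted directly from \cite{YZ13}, so there is no internal argument to compare against and your sketch has to be judged on its own terms.

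Your overall architecture (an orbit-aligned ansatz that realizes the Poincar\'e map but is not $\Z^d$-periodic, followed by an implicit-function correction via $\cP$) is consistent in spirit with \cite{YZ13}, and your formula for $D\cP(0)$ is right. The genuine gap is in the one place where the argument has content. Your explanation for why the inversion of $D\cP(0)$ is harmless --- ``only a single inversion is required and no KAM iteration is needed, so a mild non-resonance argument suffices'' --- does not hold up. A single application of an unbounded operator is still unbounded, and the theorem assumes only rational independence of $(\mu,1)$, with no Diophantine condition, so any step that actually encounters the quantities $k'\cdot\mu+k_d-\lambda$ (with $\lambda$ ranging over the eigenvalues of $\mathrm{ad}_C/2\pi i$) as genuine denominators cannot close for Liouville $\mu$. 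What actually makes the inversion uniformly bounded is that $D\cP(0)$ maps $\T^d$-functions to $\T^{d-1}$-functions and is therefore drastically underdetermined: for each $k'\in\Z^{d-1}$ and each eigenvalue $\lambda$ one is free to choose the remaining Fourier index $k_d=k_d^*(k',\lambda)$, and picking the integer nearest to $-(k'\cdot\mu-\lambda)$ places the quantity $g(k'\cdot\mu+k_d^*-\lambda)$, where $g(a)=(e^{2\pi i a}-1)/(2\pi i a)$, in a range on which $|g|\geq 2/\pi$. There is no small divisor at all; the price paid is $|k_d^*|\lesssim|\mu|\,|k'|$, which is the real origin of the $(1+|\mu|)^{-1}$ width loss --- it is forced by the right inverse, not merely a bookkeeping feature of your first non-periodic ansatz. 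Your proposal never identifies this mechanism; instead it invokes ``rational independence'' (which plays no role in bounding the inverse) and then outsources the decisive estimate to the reference.

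There is a second, related point you should not leave implicit. Since each application of the right inverse takes a $\T^{d-1}$-function of width $h'$ to a $\T^d$-function of width $h'/(1+|\mu|)$, a naive Newton/contraction scheme phrased directly in terms of $F$ suffers unbounded cumulative width loss. The iteration closes only because the $\theta_d$-direction of $F$ is always evaluated at real time: for $F$ supported on the modes $(k',k_d^*(k'))$, the restriction $\theta'\mapsto F(\theta'+t\mu,t)$, $t\in\R$, recovers width $h'$ in $\theta'$, so the Poincar\'e map has width $h'$ and the fixed-point argument can be run entirely on $\T^{d-1}$-valued data, with the $(1+|\mu|)^{-1}$ loss incurred exactly once when the periodic $F$ is finally read off. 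As written, your proposal names the right operator and correctly predicts the width factor, but the two nontrivial steps --- boundedness of the right inverse without any arithmetic condition, and the absence of cumulative width loss --- are both deferred to \cite{YZ13} with an explanation that, as stated, would not survive a Liouville frequency. The proposal is a reasonable reading guide to \cite{YZ13} rather than a self-contained argument.
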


\begin{proof}[Proof of Theorem~\ref{thm.loc.dis}]
 Assume that $\a\in(0,1)$ without lose of generality. For $A\in C_h^\omega(\T,\SO(3,\R))$ with $||A-\exp(C)||_h$ small for some constant $C\in\fso(3,\R)$, by Theorem \ref{localemb}, there exists $F\in C^\omega_{h/(1+\a)}(\T^2,\mathfrak{so}(3,\R))$ such that $(\a, A)$ is the Poincar\'{e} map of the linear system
\begin{eqnarray}
\label{change.to.lin.s}
\left\{ \begin{array}{l}\dot{x}=(C+F(\theta))x \\
\dot{\theta}=(\a,1),
\end{array} \right.
\end{eqnarray}
and the estimate holds
$$\|F\|_{h/(1+\a)}\leq c ||A-\exp(C)||_h.$$
As long as $||A-\exp(C)||_h$ is small enough, by Theorem \ref{thm.loc.cont.1}, \eqref{change.to.lin.s} is $C^\omega$-almost-reducible, i.e.,  there exist $h_n>0$,  $B_n\in C_{h_n}^\omega(\T^2,\SO(3,\R))$, $n\in\N^*$, such that $B_n$ conjugates
(\ref{change.to.lin.s}) to
\be\label{loc.cont.1-1.pf} \begin{cases}
    \dot{x}=(C_n+F_n(\theta))x\\
    \dot{\theta}=\omega,
    \end{cases}\ee
for some $C_n\in\fso(3,\R), F_n\in \cB_{h_n}$, $n\in\N^*$, and the following estimate holds
\be\label{est.-thm.loc.cont.1}
\lim_{n\rightarrow\infty} \|F_n\|_{h_n} \|B_n\|_{h_n}^L=0.
\ee

Concerning the quantitative estimates, suppose that $\Phi(t,\theta)$, $\Phi_n(t,\theta)$ are the corresponding
basic solution of $\eqref{change.to.lin.s}$ and  $(\ref{loc.cont.1-1.pf})$, then we have
\be
\label{al-ref3}
\Phi_n(t,\theta)=e^{C_nt}\Big(I+\int_0^t e^{-C_ns}F_n(\theta+s(\a,1)) \ti \Phi(s,\theta)ds \Big).
\ee
Let $g_n(t)=||e^{-C t}\Phi_n(t,\theta)||_{h_n}$.
We deduce from \eqref{al-ref3} that
$$
g(t) \leq 1 + \int_0^t \|F_n\|_{h_n}g(s)ds.
$$
Gronwall's inequality therefore provides
\be
\label{es.g.t}
g(t)\leq e^{\|F_n\|_{h_n} t}.
\ee

Back to (\ref{al-ref3}), let $t=1$, $\theta=(\theta_1,0)$, we have
\be
\Phi_n(1,\theta_1,0)=e^{C_n}+\widetilde{F}_n(\theta_1),
\ee
where $\widetilde{F}_n\in C_{h_n}^\omega(\T,\fso(3,\R))$ is given by
$$\ti F_n(\theta_1)=e^{C_n}\int_0^1 e^{-C_ns}F_n(\theta_1+s\a,s) \ti \Phi(s,\theta_1,0)ds.$$
Then by \eqref{es.g.t} we deduce that
\be
\label{F.n.ti.pf}
\|\widetilde{F}_n\|_{h_n}\leq\int_0^1\|F_n\|_{h_n}g(s) ds\leq e^{\|F_n\|_{h_n}}-1\leq2\|F_n\|_{h_n},
\ee
where the last inequation is provided by smallness of $\|F_n\|_{h_n}$. 

Since $B_n$ conjugates \eqref{change.to.lin.s}  to \eqref{loc.cont.1-1.pf}, we have the relation of basic solutions
$$B_n(\theta_1+t\alpha,t)\Phi(t,\theta_1,0)B_n(\theta_1,0)^{-1}=\Phi_n(t,\theta_1,0).$$ 
Let $t=1$. As $(\a,A)$ is the Poincar\'e map of \eqref{change.to.lin.s}, we deduce that
$$A_n(\theta_1)=B_n(\theta_1+\alpha,0)A(\theta_1)B_n(\theta_1,0)^{-1}=e^{C_n}+\widetilde{F}_n(\theta_1),$$
where we use the fact that $B$ is $1$-periodic in both coordinates. Besides, \eqref{F.n.ti.pf} and \eqref{est.-thm.loc.cont.1} give that 
$$||A_n-e^{C_n}||_{h_n}||B_n||_{h_n}^L=0.$$
Hence we have completed the proof of Theorem~\ref{thm.loc.dis}.
\end{proof}

\subsection{Proof of Theorem~\ref{thm.loc.cont.1}}
In this subsection, we will introduce an alternative analytic space and norm to work with. We will also introduce an iteration scheme, by applying which we will be able to prove Theorem~\ref{thm.loc.cont.1}. The proof of the iteration scheme will be given in the next subsection.

\subsubsection{Spaces and Norms}
To make the future estimates simpler, we introduce an alternative analytic space and norm.  For $h>0$, we define $\cB_h(*)$ as the space of functions $F\in C^\omega(\T^2, *)$ ($*$ can be $\fso(3,\R)$, $\SO(3,\R)$, $\R$ or $\C$) satisfying
$\sum_{k\in\Z^2}||\hat F(k)||e^{2\pi|k|h}<\infty$, where $|k|:=|k_1|+|k_2|$ for $k=(k_1,k_2)$,  equipped  with the norm
$$||F||^\#_h:= \sum_{k\in\Z^2}||\hat F(k)||e^{2\pi|k|h}<\infty. $$
It follows immediately that for any $F,G\in \cB_h(\SO(3,\R))$, we have $FG\in \cB_h(\SO(3,\R))$, and
$$||FH||^\#_h\leq ||F||^\#_h||H||^\#_h. $$
Sometimes, for an analytic  matrix valued function, we need to consider the relations between its analytic norm and the analytic  norms  of its entry functions. As for $\fso(3,\R)$ case, we obviously have the conclusion
\be
\|\sum_{s=1}^3 f_s J_s\|^\#_h=\left(\sum_{s=1}^3 (\|f_s\|^\#_h)^2 \right)^{\frac{1}{2}},\quad \forall f_1,f_2,f_3\in\cB_h(\R),\,h>0.
\ee

For any $N>0$, we define the operators $\cT_N$,  $\cR_N$ on $\cB_h(*)$ as
$$(\cT_NF)(\theta):=\sum_{k\in\Z^d,|k|<N}\hat F(k)e^{2\pi i<k,\theta>},\quad
(\cR_NF)(\theta):=\sum_{k\in\Z^d,|k|\geq N}\hat F(k)e^{2\pi i<k,\theta>}. $$
Then $\cT_N+\cR_N=I$.  And for any $0<h'<h$, since $||\hat F(k)||\leq ||F||_he^{-2\pi|k|h}$, we compute that
\begin{align}
\label{norm.1}
||\cR_NF||_{h'}^\#&=\sum_{k\in\Z^d,|k|\geq N}||\hat F(k)||e^{2\pi |k|h'}\leq\frac{2||F||_h}{\min\{1,(h-h')^2\}}e^{-2\pi N(h-h')}.
\end{align}

The spaces $\cB_h(*)$ and $ C^\omega(\T^2, *)$ are closely related. For $h>0$, $F\in \cB_{h}(*)$, we have
\begin{align}
\sup_{x\in\T^2_h}||F(x)||\leq||F||^\#_h.
\end{align}
Hence $F\in C_h^\omega(\T^2,*)$. It follows that
$$\cB_{h}(*)\subseteq  C_h^\omega(\T^2,*).$$
And for any $h>h'>0$, $F\in C_h^\omega(\T^2,*)$, we have the estimate
\be
\label{norm.2}
\sum_{k\in\Z^2}||\hat F(k)|| e^{2\pi |k|h'}
\leq\sum_{k\in\Z^2}||F||_h e^{-2\pi |k|h}e^{2\pi|k|h'}
\leq \frac{2||F||_h}{\min\{1,(h-h')^2\}}.
\ee
Therefore $F\in \cB_{h'}(*)$. We deduce that $$ C_h^\omega(\T^2,*)\subseteq \cB_{h'}(*).$$

\subsubsection{Proof of local almost reducibility of linear systems}

We will prove Theorem~\ref{thm.loc.cont.1} via a nonstandard KAM iterations. The key iteration lemma is the following. 
\begin{prop}
\label{prop.scheme}
Let $0<h\leq1$, $C\in\fso(3,\R)$, $F\in \cB_h(\fso(3,\R))$, $\e=||F||_h^\#$, $p\leq q<q_+$ be positive integers.
Under the following assumptions
\be
\label{cond.1}
e^{-\frac{1}{2}q_+h}<  \e \leq \min\{(10L)^{-100L},\, h^{4(L+1)},\,e^{-\frac{1}{2}q h}\},
\ee
\be
\label{cond.2}
\frac{1}{2q_+}\leq |q\alpha-p|\leq \frac{1}{q_+},
\ee
\be
\label{cond.3}
\{k\in\Z^2~|\, \langle k,\,\omega \rangle|<\frac{1}{7q}, \, |k|<\frac{q_+}{6}\}\subseteq\{l(q,-p)~|~l\in\Z\},
\ee
one can construct  $B\in \cB_{h_+}(\SO(3,\R))$, $C_+\in\fso(3,\R)$ and $F_+\in \cB_{h_+}(\fso(3,\R))$, with $h_+= \frac{h}{6(L+2)} $, such that $B$ conjugates
$\begin{cases}
\dot{x}=(C+F(\theta))x\\
\dot{\theta}=\omega
\end{cases}
$
to
$~\begin{cases}
\dot{x}=(C_+ + F_+(\theta))x\\
\dot{\theta}=\omega
\end{cases}$
, and for $\e_+=||F_+||_{h_+}^\#$, the estimate holds
\begin{eqnarray}
\label{est.-prop.scheme}
\e_+\left(||B||^\#_{h_+}\right)^{2L}\leq \e^2 e^{-q_+h_+}.
\end{eqnarray}
\end{prop}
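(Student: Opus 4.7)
The plan is to construct $B$ as a composition $B = B_2 \circ B_1$ of two conjugations: a near-identity KAM transformation $B_1$ eliminating the non-resonant Fourier modes of $F$ below a truncation cutoff, and a rotation-type transformation $B_2$ absorbing the resonant modes aligned with $(q,-p)$ into the new constant $C_+$. This is the Hou-You flavor of non-standard KAM, where condition (\ref{cond.3}) plays the role usually taken by a quantitative Diophantine condition on $\omega$.

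For the non-resonant step I would truncate $F = \cT_N F + \cR_N F$ at level $N \approx q_+/6$, the tail being exponentially small by (\ref{norm.1}). Decompose $\cT_N F = F^{nr} + F^r$ according to whether the Fourier index $k$ lies in $\{l(q,-p) : l \in \Z\}$ or not. For non-resonant $k$, condition (\ref{cond.3}) gives $|\langle k,\omega\rangle| \geq 1/(7q)$, so the operator $2\pi i\langle k,\omega\rangle I - \ad_C$ on $\fso(3,\R)$ is invertible with norm bound $\lesssim q$ provided $\|C\|$ is kept small. Solving the cohomological equation $\partial_\omega Y = [C,Y] + F^{nr}$ in Fourier coordinates yields $Y$ with $\|Y\|^\#_{h_1} \lesssim q\e$ on an intermediate strip of radius $h_1 \in (h_+, h)$, and by (\ref{cond.1}) this is tiny; setting $B_1 := \exp(Y)$, a near-identity map, the system becomes $\dot x = (C + F^r + G)x$ with $\|G\|^\#_{h_1} \lesssim \e^2 + \e e^{-q_+ h/12}$.

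For the resonant step I would apply $B_2(\theta) = \exp(2\pi (q\theta_1 - p\theta_2) E_0)$ with a unit-norm $E_0 \in \fso(3,\R)$; the relation $\exp(2\pi E_0) = I$ guarantees that $B_2$ descends to $\T^2$. Its contribution to the connection is $B_2^{-1}\partial_\omega B_2 = 2\pi(q\alpha - p) E_0$, an element of $\fso(3,\R)$ of size $\lesssim 1/q_+$ by (\ref{cond.2}). Under conjugation by $B_2$ each resonant Fourier mode $\widehat{F^r}(l(q,-p)) e^{2\pi i l(q\theta_1 - p\theta_2)}$ decomposes along the $\ad_{E_0}$-eigenspaces of $\fso(3,\R)\otimes\C$ (eigenvalues $0, \pm i$), and the corresponding pieces get shifted in Fourier index by $0$ and $\pm 1$; those landing at index zero combine with the connection term to define $C_+$, which by Lemma~\ref{cen} applied to the centralizer of $E_0$ is indeed an element of $\fso(3,\R)$. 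The remaining shifted modes plus $B_2^{-1}(G + \cR_N F) B_2$ form $F_+$.

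The principal obstacle is verifying the quantitative estimate (\ref{est.-prop.scheme}). On $\T^2_{h_+}$ one has $\|B_2\|^\#_{h_+} \lesssim e^{2\pi(q+p)h_+}$, and by (\ref{cond.1}) the bound $qh \leq -2\ln\e$ makes $\|B\|^{2L}$ a negative fractional power of $\e$. The choice $h_+ = h/(6(L+2))$ together with the smallness buffers $\e \leq (10L)^{-100L}$ and $\e \leq h^{4(L+1)}$ are calibrated so that the quadratic $\e^2$ gain in $G$, the tail bound $\e e^{-q_+ h/12}$ from $\cR_N F$, and the $\lesssim 1/q_+$ residue from the connection term all beat $\|B\|^{2L}$ by the prescribed factor $e^{-q_+ h_+}$. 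The bulk of the work is careful bookkeeping of how each term scales in $\e$, $q$, $q_+$, $h$, $h_+$ and $L$, and checking that the Hou-You balance goes through under (\ref{cond.1})--(\ref{cond.3}).
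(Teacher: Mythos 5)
Your proposal has a genuine gap in the small-divisor analysis, and it traces back to overlooking the eigenvalue structure of $\ad_C$. You split $\cT_N F$ into resonant/non-resonant according to whether $k\in(q,-p)\Z$ and then invoke \eqref{cond.3} to get $|\langle k,\omega\rangle|\geq 1/(7q)$ off the resonant lattice, concluding that $2\pi i\langle k,\omega\rangle I-\ad_C$ is invertible with norm $\lesssim q$ ``provided $\|C\|$ is kept small.'' But on $\fso(3,\R)_\C$ with $C=2\pi\rho J_1$, the operator $\ad_C$ has eigenvalues $0,\pm 2\pi i\rho$, so on the $J_2\pm iJ_3$ eigenspaces the divisor is $2\pi i(\langle k,\omega\rangle\mp\rho)$, not $2\pi i\langle k,\omega\rangle$. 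Condition \eqref{cond.3} says nothing about $\langle k,\omega\rangle$ being bounded away from $\pm\rho$, and the caveat ``$\|C\|$ small'' is not something you get to assume: $C$ is given, and along the iteration $C_+$ is updated and generally does not stay small. This is exactly why the paper introduces two resonance sets, $\Lambda_1=\{|\langle k,\omega\rangle|\geq 2\e^{1/4}\}$ controlling the $J_1$-component and $\Lambda_2=\{|\langle k,\omega\rangle-\rho|\geq\e^{1/4}\}$ controlling the $J_2,J_3$-components, and solves only over $\cB_h^{(nre)}$ via Lemma~\ref{one-step-KAM}; the subsequent case split is driven by whether $\Lambda_2^c$ meets the truncation window. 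Your single lattice $(q,-p)\Z$ cannot capture the $\rho$-resonances, and the cohomological equation you propose to solve in the ``non-resonant'' step is, in general, not solvable with the bounds you claim.

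There is a second, smaller problem in the resonant step. After conjugating by $B_2(\theta)=\exp(2\pi(q\theta_1-p\theta_2)E_0)$, the Fourier index $l$ of each $\ad_{E_0}$-eigencomponent of $\widehat{F^r}(l(q,-p))$ is shifted by $0$ or $\pm 1$, so the piece landing at $l=0$ is absorbed into $C_+$, but the pieces shifted to $l\pm1$, together with every $|l|\geq 2$ mode, stay in $F_+$ at size $\sim\e$. A single such rotation does not conjugate the resonant block to a constant, and you cannot obtain the quadratic bound \eqref{est.-prop.scheme} this way. The paper's route is different: after the near-identity KAM step and (in Case~2) a rotation $Q=\exp(-2\pi\langle k_*,\theta\rangle J_1)$ that realigns the $\rho$-resonant frequency onto $(q,-p)\Z$, the truncated coefficient depends only on $q\theta_1-p\theta_2$ and the quantitative Floquet lemma (Lemma~\ref{F-Lem}) is invoked to conjugate the full truncated system to a genuine constant, with explicit control $\|B_1\|_{h/3}\leq e^{15q_+q\e^{1/4}h}$. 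That Floquet step is what produces a remainder consisting solely of the analytically small tail $\Ad(B_1)\cR_{q_+/6}F_1$ and hence the $\e^2 e^{-q_+h_+}$ bound after paying for $\|B\|^{2L}$.
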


We point out that \eqref{cond.3} requires the resonant sites to be in some special form. In fact this is satisfied as a consequence of following small divisor lemma given in \cite[Lemma 4.1]{HY12}.
\begin{lem}[\cite{HY12}]
\label{q-Lem}
Let $\a\in\R\backslash\Q$, $\omega=(\a,1)$, $\frac{p_n}{q_n}$ be the best approximation of $\a$, we have
\be
\{k\in\Z^2~|\, \langle k,\,\omega \rangle|<\frac{1}{7 q_n}, \, |k|<\frac{q_{n+1}}{6}\}\subseteq\{l(q_n,-p_n)~|~l\in\Z\}.
\ee
\end{lem}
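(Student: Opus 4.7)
My plan is to use the best-approximation property of continued-fraction convergents to rule out all resonant sites except integer multiples of $(q_n,-p_n)$. The two quantitative inputs I will need are: (i) for any integer $1\leq q<q_n$, $\|q\alpha\|_\Z\geq \beta_{n-1}$; and (ii) $\beta_{n-1}=1/(q_n+\alpha_nq_{n-1})>1/(2q_n)$, since $\alpha_n q_{n-1}<q_n$. Both follow immediately from the formulas recalled in Section~\ref{sect.alpha}.

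Given $k=(k_1,k_2)\in\Z^2$ satisfying the two inequalities in the statement, I would first dispose of $k_1=0$ trivially: then $|k_2|=|\langle k,\omega\rangle|<1/(7q_n)<1$ forces $k=(0,0)=0\cdot(q_n,-p_n)$. Otherwise, by sending $k\mapsto -k$ I may assume $k_1>0$, so $0<k_1<q_{n+1}/6$. I then perform Euclidean division $k_1=mq_n+r$ with $m\geq 0$ and $0\leq r<q_n$, and use $q_n\alpha=p_n+(-1)^n\beta_n$ to decompose
\[
k_1\alpha+k_2=(mp_n+k_2)+r\alpha+(-1)^nm\beta_n.
\]
Since $mp_n+k_2\in\Z$ can be absorbed into the distance to the nearest integer, this yields
\[
\|r\alpha\|_\Z\leq |k_1\alpha+k_2|+m\beta_n<\frac{1}{7q_n}+\frac{m}{q_{n+1}},
\]
where I also used $\beta_n<1/q_{n+1}$. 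Since $mq_n\leq k_1<q_{n+1}/6$, the last term is less than $1/(6q_n)$, so $\|r\alpha\|_\Z<1/(3q_n)$. If $r\neq 0$, inputs (i)–(ii) give the opposite inequality $\|r\alpha\|_\Z\geq\beta_{n-1}>1/(2q_n)$, a contradiction; hence $r=0$ and $k_1=mq_n$. The same bound then becomes $|mp_n+k_2|<1$, forcing $k_2=-mp_n$ because $mp_n+k_2\in\Z$, so $k=m(q_n,-p_n)$, as desired.

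No genuine obstacle is expected here: the entire argument is careful bookkeeping of explicit constants, arranged so that $1/(7q_n)+1/(6q_n)<1/(3q_n)$ stays strictly below $\beta_{n-1}>1/(2q_n)$. The particular constants $7$ and $6$ leave a comfortable gap and are tuned in \cite{HY12} to mesh with the smallness conditions entering Proposition~\ref{prop.scheme}; slight worsening would still suffice, but the chosen values give the cleanest statement.
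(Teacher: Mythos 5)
The paper does not prove this lemma; it is cited verbatim from \cite{HY12} (Lemma 4.1 there), so there is no in-text argument to compare against. Your proof is correct and is the standard self-contained argument for this kind of small-divisor statement. The ingredients check out: from Section~\ref{sect.alpha} one has $q_n\alpha-p_n=(-1)^n\beta_n$ with $\beta_n<1/q_{n+1}$, and $\beta_{n-1}=1/(q_n+\alpha_nq_{n-1})>1/(2q_n)$; the best-approximation property gives $\|r\alpha\|_\Z\geq\beta_{n-1}$ for $1\leq r<q_n$. Writing $k_1=mq_n+r$ and bounding $m\leq k_1/q_n<q_{n+1}/(6q_n)$ yields $m\beta_n<1/(6q_n)$, so $\|r\alpha\|_\Z<13/(42q_n)<1/(2q_n)<\beta_{n-1}$ forces $r=0$, and then $|mp_n+k_2|<13/42<1$ forces $k_2=-mp_n$; the preliminary reduction to $k_1>0$ and the trivial case $k_1=0$ are handled correctly. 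This is almost certainly the same argument as in \cite{HY12}, with the constants $7$ and $6$ chosen to leave a comfortable margin.
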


The proof of Proposition~\ref{prop.scheme} is rather technical and will be given in the next subsection. Now we use the proposition to prove Theorem~\ref{thm.loc.cont.1}.

\begin{proof}[Proof of Theorem~\ref{thm.loc.cont.1}]
We choose 
\be
\label{def.del.pf}
\delta=\delta(h,L)=\frac{h^2}{8}\min\{(10L)^{-100L}, (\frac{h}{2})^{4(L+1)}, e^{-\frac{h}{4}}\}.
\ee
For $F\in C_h^\omega(\T^2,\fso(3,\R))$ with $||F||_h<\delta$, by \eqref{norm.2}, we deduce that $F\in\cB_{\frac{h}{2}}(\fso(3,\R))$ satisfies the estimate
\be
\label{bd.F.well.pf}
||F||_{\frac{h}{2}}^\#\leq\frac{8}{h^2}||F||_h<\frac{8}{h^2}\delta.
\ee

To study almost reducibility of the linear system
\be
\label{lin.sys.pf.0}
\begin{cases}
    \dot{x}=(C+F(\theta))x\\
    \dot{\theta}=\omega,
    \end{cases}
\ee
we construct conjugation inductively. 

\smallskip

Let $B_0= I$, $h_0=\frac{h}{2}$, $C_0= C$, $F_0= F$. Then $B_0$ automatically conjugates \eqref{lin.sys.pf.0}
 to 
$$\begin{cases}
    \dot{x}=(C_{0}+F_{0}(\theta))x\\
    \dot{\theta}=\omega,
    \end{cases}$$
and the estimate holds for $\e_0=\|F_0\|_{h_0}^\#$,
\begin{eqnarray}
\e_0\left(||B_0||^\#_{h_0}\right)^L =\e_0\leq  \min\{(10L)^{-100L}, h_0^{4(L+1)}, e^{-\frac{1}{2}q_0h_0}\},
\end{eqnarray}
where we use \eqref{def.del.pf} and \eqref{bd.F.well.pf}.

To do induction, we assume that for some $n\in\N^*$, there exist $h_n>0$, $B_n\in \cB_{h_n}(\SO(3,\R))$, $C_n\in\fso(3,\R)$ and $F_n\in \cB_{h_n}(\fso(3,\R))$, such that $B_n\in \cB_{h_n}(\SO(3,\R))$ conjugates \eqref{lin.sys.pf.0} to 
\be
\label{sys-KAM-n}
\begin{cases}
    \dot{x}=(C_{n}+F_{n}(\theta))x\\
    \dot{\theta}=\omega,
    \end{cases}
\ee
and the estimate holds for $\e_n=\|F_{n}\|_{h_{n}}^\#$,
\begin{eqnarray}
\label{est-KAM-n}
\e_n \left(||B_n||^\#_{h_{n}}\right)^L\leq  \min\{(10L)^{-100L},\,(h_{n})^{4(L+1)},\,e^{-\frac{1}{2}q_{n}h_{n}}\}.
\end{eqnarray}

\smallskip

To construct conjugation at step $n+1$, we separate into two different cases, depending on the size of $\e_n$.

{\it Case 1.} If $\e_n\leq e^{-\frac{1}{2}q_{n+1}h_n}$, then we trivially define
$$
h_{n+1}=h_n,\quad B_{n+1}=B_n,\quad C_{n+1}=C_n,\quad F_{n+1}=F_n.
$$
Immediately,  $B_{n+1}$
conjugates \eqref{lin.sys.pf.0} to 
$$\begin{cases}
    \dot{x}=(C_{n+1}+F_{n+1}(\theta))x\\
    \dot{\theta}=\omega,
    \end{cases}$$
 and the estimate holds for $\e_{n+1}=\|F_{n+1}\|_{h_{n+1}}^\#$
\begin{eqnarray*}
\e_{n+1} \left(||B^{(n+1)}||^\#_{h_{n+1}}\right)^L
 \leq \min\{(10L)^{-100L},\,(h_{n+1})^{4(L+1)},\,e^{-\frac{1}{2}q_{n+1}h_{n+1}}\}.
\end{eqnarray*}

\smallskip

{\it Case 2.} If $ \e_n> e^{-\frac{1}{2}q_{n+1}h_n}$, then together with \eqref{est-KAM-n} we have
\begin{eqnarray}
\label{smallness-cond.-case.B}
e^{-\frac{1}{2}q_{n+1}h_n} <\e_n\leq\min\{(10L)^{-100L},\,(h_n)^{4(L+1)},\,e^{-\frac{1}{2}q_nh_n}\}.
\end{eqnarray}
By preceding estimate and Lemma \ref{q-Lem}, we are able to apply Proposition~\ref{prop.scheme}. This provides the existence of $B^{(n+1)} \in \cB_{h_{n+1}}(\SO(3,\R))$, $C_{n+1}\in\fso(3,\R)$ and $F_{n+1}\in \cB_{h_{n+1}}(\fso(3,\R))$, such that $B^{(n+1)}$ conjugates (\ref{sys-KAM-n}) to
\be
\label{sys-KAM-n+1}
\begin{cases}
    \dot{x}=(C_{n+1}+F_{n+1}(\theta))x\\
    \dot{\theta}=\omega,
    \end{cases}
\ee
and the estimate holds
\be
\label{es.indu.add.2}
\e_{n+1}\left(||B^{(n+1)}||^\#_{h_{n+1}}\right)^{2L}\leq \e_n^2 e^{-q_{n+1}h_{n+1}}.
\ee
In particular,
\be
\label{es.indu.add.3}
\e_{n+1}\leq\e_n^2.
\ee
Then $B_{n+1}:=B^{(n+1)} B_{n}$ conjugates  \eqref{lin.sys.pf.0} to (\ref{sys-KAM-n+1}). Moreover, by \eqref{est-KAM-n},\eqref{es.indu.add.2} and \eqref{es.indu.add.3}, we have the following estimate
\begin{align*}
\e_{n+1} \left(||B_{n+1}||^\#_{h_{n+1}}\right)^L\leq &
\e_{n+1} \left(||B^{(n+1)}||^\#_{h_{n+1}}\right)^L\left(||B_n||^\#_{h_{n+1}}\right)^L \nonumber\\
\leq & \left(\e_{n+1}\left(||B^{(n+1)}||^\#_{h_{n+1}}\right)^{2L}\right)^{\frac{1}{2}} \cdot \e_n\left(||B_n||^\#_{h_{n+1}}\right)^{L}\nonumber\\
\leq & \left( \min\{(10L)^{-100L},\, (h_{n})^{4(L+1)},\,e^{-\frac{1}{2}q_{n}h_{n}}\} \right)^2 \cdot e^{-\frac{1}{2}q_{n+1}h_{n+1}}\nonumber\\
\leq & \min\{(10L)^{-100L},\, (h_{n+1})^{4(L+1)},\,e^{-\frac{1}{2}q_{n+1}h_{n+1}}\}.
\end{align*}

Hence in both case, we are able to construct conjugacy $B_n\in\cB_{h_{n+1}}(\SO(3,\R))$ and satisfy estimates in the same form. 

\smallskip
By induction, we conclude that for any $n\in\N$, there exist $h_n$, $B_n\in \cB_{h_n}(\SO(3,\R))$, $C_{n}\in\fso(3,\R)$, $F_{n}\in \cB_{h_n}(\fso(3,\R))$, such that $B_n$ conjugates   \eqref{lin.sys.pf.0} to
$$
\begin{cases}
    \dot{x}=(C_{n}+F_{n}(\theta))x,\\
    \dot{\theta}=\omega,
    \end{cases}
$$
and the estimate holds
\begin{eqnarray}\label{conclude.n}
\|F_{n}\|_{h_{n}}^\# \left(||B_n||^\#_{h_{n}}\right)^L
 \leq \min\{(10L)^{-100L},\,(h_{n})^{4(L+1)},\,e^{-\frac{1}{2}q_{n}h_{n}}\}.
\end{eqnarray}
Moreover, \eqref{conclude.n} provides that
$$\lim_{n\to\infty}\|F_{n}\|_{h_{n}}^\# \left(||B_n||^\#_{h_{n}}\right)^L=0.$$
As shown in \eqref{norm.1}, $\cB_{h_n}(*)\subset C_h^\omega(\T^2,*)$ with $||\cdot||_h\leq||\cdot||_h^\#$, we deduce that $B_n\in C^\omega_{h_n}(\T^2,\SO(3,\R))$, $F_{n}\in C^\omega_{h_n}(\T^2,\fso(3,\R))$ and 
$$\lim_{n\to\infty}\|F_{n}\|_{h_{n}} \left(||B_n||_{h_{n}}\right)^L=0.$$
Hence we complete the proof of Theorem~\ref{thm.loc.cont.1}.
\end{proof}

\subsection{Proof of iteration scheme}
In this subsection, we construct the iteration scheme. We will firstly introduce a nice subspace of $\cB_h(\fso(3,\R))$ in the sense of conjugation, then give the proof of Proposition~\ref{prop.scheme}.

For simplicity of notation, we denote $\cB_h(\fso(3,\R))$ briefly by $\cB_h$. Without lose of generality, we assume that
$$ C=2\pi \rho J_1,\qquad \rho\in\R.$$

\subsubsection{A nice subspace of $\cB_h$}
For $F\in\cB_h$, to construct $B\in \cB_h(\SO(3,\R))$ so that for some $C_+\in\fso(3,\R)$ and a smaller $F_+\in\cB_h$, $B$ conjugating $\begin{cases}\dot{x}=(C+F(\theta))x\\\dot{\theta}=\omega\end{cases}$ to $\quad\begin{cases}\dot{x}=(C_++F_+(\theta))x\\\dot{\theta}=\omega\end{cases}$, it is sufficient to solve
\be\label{gener-co-hom-eq}
\partial_{\omega }B=B(C+F)-(C_+ + F_+)B.
\ee
If  $B$ is close to the identity, namely $B=e^Y$ with $Y\in \cB_h$ small, then (\ref{gener-co-hom-eq}) provides a linearized equation
\be\label{lin-co-hom-eq}
\partial_{\omega }Y-[C,Y]=F+D,
\ee
for some constant $D\in \fso(3,\R)$. However, when solving (\ref{lin-co-hom-eq}), one may encounter the small divisor problem, i.e., the liner operator
\be \label{lin-op}
\cB_h\rightarrow\cB_h,\quad Y\mapsto \partial_{\omega }Y-[C,Y],
\ee
may be not invertible or invertible but with an inverse operator of too large norm.

As studied in \cite{HY12, HSY19}, a key observation is that there exists a subspace $\cB_h^{(nre)}$ of $\cB_h$, satisfying the property that for any $Y\in \cB_h^{(nre)}$, one has
\be
\label{prop.B.nre.1}
\partial_{\omega}Y,\, [C,Y]\in\cB_h^{(nre)},
\ee
and the estimate holds
\be
\label{prop.B.nre.2}
||\partial_{\omega} Y-[C,Y]||_h^\#\geq\eta||Y||^\#_h,
\ee
for a uniform constant $\eta>0$. In other word, the restricted linear operator \eqref{lin-op} on $\cB_h^{(nre)}$ behaves well. Therefore, we are able to solve \eqref{lin-co-hom-eq} in $\cB_h^{(nre)}$. To formulate the result precisely, we decompose the space
$\cB_h$ into a direct sum $\cB_h^{(nre)}\oplus\cB_h^{(re)}$. 

\begin{lem}[\cite{HY12, HSY19}]
\label{one-step-KAM}
Let $h>0$, $\eta>0$, $\omega\in\R^2$, $C\in\fso(3,\R)$. For any $F\in \cB_h$ satisfying $||F||^\#_h\leq 10^{-8} \eta^2$, there exist $Y\in\cB_h$, $H\in\cB_h^{(re)}$ such that
 $e^Y$ conjugate
$\begin{cases}
\dot{x}=(C+F(\theta))x\\
\dot{\theta}=\omega
\end{cases}$
to \quad  $\begin{cases}
\dot{x}=(C + H(\theta))x\\
\dot{\theta}=\omega
\end{cases}$,
satisfying the estimates
\be
||Y||^\#_h \leq \frac{10}{\eta}||F||^\#_h,\qquad||H||^\#_h \leq 2 ||F||^\#_h .
\ee
\end{lem}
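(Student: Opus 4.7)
The plan is to turn the conjugation condition $\partial_\omega B = B(C+F) - (C+H)B$ for $B=e^Y$ into a nonlinear fixed-point problem on the nonresonant subspace $\cB_h^{(nre)}$ and solve it by Banach's contraction principle, with invertibility supplied by the small-divisor estimate \eqref{prop.B.nre.2}.

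First I would rewrite the conjugation identity as
\begin{equation*}
H = e^{\ad Y}(C+F) - C - (\partial_\omega e^Y)e^{-Y}
= F - (\partial_\omega Y - [C,Y]) + R(Y),
\end{equation*}
where $R(Y)$ collects all terms that are at least quadratic in $Y$: the commutator $[Y,F]$, the tails $\sum_{k\geq 2}\tfrac{1}{k!}(\ad Y)^k(C+F)$, and the derivative-type contributions $-\tfrac{1}{2}[Y,\partial_\omega Y]-\sum_{k\geq 2}\tfrac{1}{(k+1)!}(\ad Y)^k(\partial_\omega Y)$. Then I would split $F = F^{(nre)} + F^{(re)}$ and seek $Y\in\cB_h^{(nre)}$ so that $H \in \cB_h^{(re)}$. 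Since \eqref{prop.B.nre.1} guarantees that $LY := \partial_\omega Y - [C,Y]$ lies in $\cB_h^{(nre)}$ whenever $Y$ does, the requirement $\Pi^{(nre)}(H) = 0$ becomes
\begin{equation*}
LY = F^{(nre)} + \Pi^{(nre)}(R(Y)),
\end{equation*}
and by \eqref{prop.B.nre.2} the operator $L$ is invertible on $\cB_h^{(nre)}$ with $\|L^{-1}\|\leq \eta^{-1}$, reducing everything to the fixed-point equation $Y = T(Y):= L^{-1}(F^{(nre)} + \Pi^{(nre)}R(Y))$.

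The next step would be to verify that $T$ is a $\tfrac{1}{2}$-contraction of the ball $V:=\{Y\in\cB_h^{(nre)}:\|Y\|_h^\#\leq 10\eta^{-1}\|F\|_h^\#\}$ into itself. The hypothesis $\|F\|_h^\#\leq 10^{-8}\eta^2$ forces $\|Y\|_h^\#\leq 10^{-7}\eta$ on $V$, so every series defining $R(Y)$ converges and the quadratic-and-higher contributions are $o(\|F\|_h^\#)$; combined with $\|L^{-1}\|\leq \eta^{-1}$, the contraction property follows. Banach's theorem then delivers a unique fixed point $Y\in V$, and $H := F - LY + R(Y)$ lies in $\cB_h^{(re)}$ by construction. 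The norm estimate $\|Y\|_h^\# \leq 10\eta^{-1}\|F\|_h^\#$ is immediate from $Y\in V$, and
\begin{equation*}
\|H\|_h^\# \leq \|F^{(re)}\|_h^\# + \|(I-\Pi^{(nre)})R(Y)\|_h^\# \leq \|F\|_h^\# + \|R(Y)\|_h^\# \leq 2\|F\|_h^\#,
\end{equation*}
using the quadratic smallness of $R$.

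The main obstacle I anticipate is closing the estimates for the derivative-loss terms $[Y,\partial_\omega Y]$ and $(\ad Y)^k(\partial_\omega Y)$ inside $R(Y)$, which involve $\partial_\omega Y$ in a setting where no shrinking of the strip width $h$ is available and $\|C\|$ is not \textit{a priori} bounded. The natural way forward is to substitute $\partial_\omega Y = LY + [C,Y]$ and exploit the structure of $\cB_h^{(nre)}$ built in \cite{HY12, HSY19}: within that subspace $[C,Y]$ is controlled by $\|Y\|_h^\#$ through the same small-divisor structure underlying \eqref{prop.B.nre.2}, so that $\|\partial_\omega Y\|_h^\#$ can be reabsorbed into the fixed-point estimate. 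This geometric input from the construction of $\cB_h^{(nre)}$ is the essential ingredient that a naive KAM loop would lack, and everything else is routine bookkeeping.
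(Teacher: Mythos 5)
The paper itself does not prove this lemma; it is cited from \cite{HY12, HSY19}, so I can only assess your argument on its own merits. Your overall framework --- reduce to a fixed-point equation for $Y$ on $\cB_h^{(nre)}$, invert the linear operator $L$ using \eqref{prop.B.nre.2}, and close by Banach contraction --- is the right one and is indeed the strategy of those references. However, your treatment of the quadratic remainder $R(Y)$ contains a genuine gap, and your proposed repair does not work.

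The problematic terms in $R(Y)$ are $(\ad Y)^k(C)$ and $(\ad Y)^{k-1}(\partial_\omega Y)$ for $k\ge 2$. Your fix is to substitute $\partial_\omega Y = LY + [C,Y]$ and then assert that within $\cB_h^{(nre)}$ ``$[C,Y]$ is controlled by $\|Y\|_h^\#$ through the same small-divisor structure underlying \eqref{prop.B.nre.2}.'' This assertion is false: with $C = 2\pi\rho J_1$ and $Y = fJ_1 + \Re w\,J_2 + \Im w\,J_3\in\cB_h^{(nre)}$ one computes $[C,Y] = 2\pi\rho(\Re w\,J_3 - \Im w\,J_2)$, so $\|[C,Y]\|_h^\#$ is of size $|\rho|\,\|w\|_h^\#$ and there is no a priori bound on $\|C\|$ in the lemma. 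The nonresonance condition in $\Lambda_2$ controls the \emph{difference} $\langle k,\omega\rangle-\rho$, which bounds $LY$ from below in terms of $Y$; it gives no bound on $[C,Y]$ (or on $\partial_\omega Y$) alone. After your substitution the contribution $\tfrac12[Y,[Y,C]]-\tfrac12[Y,\partial_\omega Y]=\tfrac12[Y,[Y,C]]-\tfrac12[Y,LY]-\tfrac12[Y,[C,Y]] = [Y,[Y,C]]-\tfrac12[Y,LY]$ still carries an uncontrolled $[Y,[Y,C]]$ piece, so nothing is gained.

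What actually closes the estimate is an exact algebraic cancellation that you have within reach but do not exploit. Writing $g(z)=(e^z-1)/z$ and grouping the $C$-terms and the $\partial_\omega Y$-terms at each order in $\ad Y$, one finds
\begin{equation*}
(e^{\ad Y}-I)(C) - g(\ad Y)(\partial_\omega Y)
= \sum_{k\ge 0}\frac{1}{(k+1)!}(\ad Y)^k\bigl([Y,C]-\partial_\omega Y\bigr)
= -\,g(\ad Y)\bigl(\partial_\omega Y + [C,Y]\bigr),
\end{equation*}
so that, with your sign convention, $H = F - g(\ad Y)(\partial_\omega Y + [C,Y]) + (e^{\ad Y}-I)(F)$, and hence $R(Y) = -(g(\ad Y)-I)(\widetilde L Y) + (e^{\ad Y}-I)(F)$ with $\widetilde L Y = \partial_\omega Y + [C,Y]$. (The paper's stated conjugation equation and its definition of $L$ are mutually inconsistent by a sign; choose the consistent pair so that $\widetilde L$ is precisely the cohomological operator satisfying \eqref{prop.B.nre.1}--\eqref{prop.B.nre.2}.) The crucial point is that $\partial_\omega Y$ and $[C,Y]$ never appear separately in $R(Y)$, only in the combination $\widetilde L Y$. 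The fixed-point equation $\widetilde L Y = \Pi^{(nre)}\bigl(F + R(Y)\bigr)$ then directly yields a bound $\|\widetilde L Y\|_h^\#\lesssim\|F\|_h^\#$, so $\|R(Y)\|_h^\#\lesssim\|Y\|_h^\#(\|F\|_h^\#+\|\widetilde LY\|_h^\#)$ is quadratically small with no input on $\|C\|$ or $\|\partial_\omega Y\|$ whatsoever. Incorporating this identity, the contraction argument and the final estimates go through as you outline.
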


\begin{rem}
Preceding lemma was firstly given in \cite{HY12} and there are some minor improvements in later articles such as \cite{HSY19}.
Here we use the version in \cite{HSY19}.
\end{rem}

Given  $\e>0$. We define
\begin{equation*}
\Lambda_1:=\{k\in\Z^2\,|\, |\langle k,\, \omega\rangle|\geq 2 \e^{\frac{1}{4}}\},\quad
\Lambda_2:=\{k\in\Z^2\,|\, |\langle k,\, \omega\rangle-\rho|\geq \e^{\frac{1}{4}}\}.
\end{equation*}
Then it follows directly that
\be\label{Lambda-relation}
\Lambda_2^c-k_*:= \{k-k_*\,|\,k\in \Lambda_2^c\,\}\subseteq \Lambda_1^c,\quad \forall k_*\in \Lambda_2^c.
\ee
Furthermore, we have the  following simple fact.

\begin{lem}\label{small-div-Lem}
Under the assumptions of Proposition~\ref{prop.scheme}, the estimate holds
\be \label{est.1-small-div-Lem}
100(L+2)^2q \e^{\frac{1}{4}}\ll 1.
\ee
Consequently, we have
\be \label{est.2-small-div-Lem}
\Lambda_1^c \cap\{k\in\Z^2~|~|k|<\frac{q_+}{6}\}\subseteq \{l(q,-p)~|~l\in\Z\}.
\ee
\end{lem}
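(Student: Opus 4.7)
\medskip
\noindent\textit{Proof plan.} The conclusion \eqref{est.2-small-div-Lem} is an immediate consequence of \eqref{est.1-small-div-Lem} together with the resonance hypothesis \eqref{cond.3} of Proposition~\ref{prop.scheme}: indeed, since $100(L+2)^2\geq 900$, the bound \eqref{est.1-small-div-Lem} gives $14q\e^{1/4}\leq 1$, i.e. $2\e^{1/4}\leq 1/(7q)$. Hence any $k\in\Lambda_1^c$ with $|k|<q_+/6$ satisfies $|\langle k,\omega\rangle|<2\e^{1/4}\leq 1/(7q)$, so that \eqref{cond.3} places $k$ in $\{l(q,-p)\,|\,l\in\Z\}$. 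All the work therefore lies in proving \eqref{est.1-small-div-Lem}.

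\medskip
\noindent The plan is to prove \eqref{est.1-small-div-Lem} by an interpolation argument that simultaneously exploits all three upper bounds on $\e$ in \eqref{cond.1}. Write $\e^{1/4}=\e^{a}\,\e^{b}\,\e^{c}$ with positive exponents $a,b,c$ to be chosen so that $a+b+c=1/4$. Applying, factor by factor, the bounds $\e\leq e^{-qh/2}$, $\e\leq h^{4(L+1)}$ and $\e\leq (10L)^{-100L}$, I obtain
\[
q\e^{1/4}\;\leq\;q e^{-qha/2}\cdot h^{4(L+1)b}\cdot (10L)^{-100Lc}
\;\leq\;\frac{2}{eha}\,h^{4(L+1)b}(10L)^{-100Lc},
\]
where the last step uses the elementary maximum $\max_{q>0}qe^{-qha/2}=2/(eha)$. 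The decisive choice is $b=1/\bigl(4(L+1)\bigr)$, which exactly cancels the factor $h^{-1}$ arising from the $q$-optimization; this is possible since it leaves the budget $a+c=L/\bigl(4(L+1)\bigr)>0$ for the other two exponents. Taking for instance $a=c=L/\bigl(8(L+1)\bigr)$ then yields
\[
q\e^{1/4}\;\leq\;\frac{16(L+1)}{eL}\,(10L)^{-100L^{2}/(8(L+1))}\;\leq\;\frac{32}{e}\,(10L)^{-100L/16},
\]
where the last inequality uses $L^{2}/(L+1)\geq L/2$ for $L\geq 1$. Multiplying by $100(L+2)^{2}$, the polynomial prefactor is swamped by the super-exponentially decaying factor $(10L)^{-100L/16}$, and \eqref{est.1-small-div-Lem} follows (concretely this already gives a bound of order $10^{-3}$ when $L=1$, and a much smaller bound for $L\geq 2$).

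\medskip
\noindent The main conceptual obstacle is recognizing that none of the three bounds in \eqref{cond.1} suffices in isolation: using only $\e\leq e^{-qh/2}$ leaves a factor of order $1/h$ that blows up for thin strips, using only $\e\leq h^{4(L+1)}$ leaves no absolute smallness, and using only $\e\leq (10L)^{-100L}$ again produces a factor $1/h$. The role of $b=1/(4(L+1))$ is forced: it is the unique exponent that simultaneously absorbs the $h^{-1}$ from the calculus optimization and uses no more of the $h^{4(L+1)}$-budget than necessary, leaving the absolute bound $(10L)^{-100L}$ free to dominate the polynomial prefactor $100(L+2)^{2}$.
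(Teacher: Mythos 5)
Your proof is correct and follows essentially the same route as the paper: an interpolation of the three upper bounds on $\e$ in \eqref{cond.1}, using $\e\leq e^{-qh/2}$ together with the elementary maximum of $qe^{-cq}$ to absorb $q$ at the cost of a factor $1/h$, using $\e\leq h^{4(L+1)}$ to cancel that $1/h$, and letting the leftover budget (through $\e\leq(10L)^{-100L}$) swamp the polynomial prefactor $100(L+2)^2$, after which \eqref{est.2-small-div-Lem} follows from \eqref{cond.3}. Your explicit exponent bookkeeping via $a+b+c=\tfrac14$ with the forced choice $b=1/(4(L+1))$ is a cleaner and more transparent rendering of the same argument that the paper compresses into a single chain of inequalities.
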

\begin{proof}
By (\ref{cond.1}), we get
\begin{eqnarray*}
100(L+2)^2q \e^{\frac{1}{4}}&\leq &100 (L+2)^2 q \e^{\frac{1}{8}} \e^{\frac{1}{8}}\leq 100 (L+2)^2 q e^{-\frac{qh}{16}} \e^{\frac{1}{16}}\\
&\leq &\frac{1600
(L+2)^2}{e h}\e^{\frac{1}{8}}\leq 10^5 L^2\e^{\frac{1}{8}}\ll 1,
\end{eqnarray*}
then \eqref{est.2-small-div-Lem} follows from \eqref{cond.3} and  \eqref{est.1-small-div-Lem}.
\end{proof}

Given $h>0$, now we define $\cB_h^{(nre)}$, $\cB_h^{(re)}$ explicitly and verify the properties \eqref{prop.B.nre.1} and \eqref{prop.B.nre.2}. Let
\begin{align*}
\cB_h^{(nre)}:= \{E\in \cB_h\,|\, &E=fJ_1+ \Re wJ_2+ \Im wJ_3,\, \text{ for }f\in\cB_h(\R)\text{ in the form }\sum_{k\in \Lambda_1}
\hat{f}(k)e^{2\pi i \langle k,\theta\rangle},\nonumber\\
&w\in\cB_h(\C) \text{ in the form }\sum_{k\in \Lambda_2}
\hat{w}(k)e^{2\pi i \langle k,\theta\rangle}\},
\end{align*}
\begin{align}
\label{def.B.h.re}
\cB_h^{(re)}:= \{H\in \cB_h\,|\, &H=fJ_1+ \Re wJ_2+ \Im wJ_3,\,  \text{ for }f\in\cB_h(\R)\text{ in the form }\sum_{k\in \Lambda_1^c}
\hat{f}_1(k)e^{2\pi i \langle k,\theta\rangle},\nonumber\\
& w\in\cB_h(\C) \text{ in the form }\sum_{k\in \Lambda_2^c}
\hat{w}(k)e^{2\pi i \langle k,\theta\rangle}\}.
\end{align}
It follows the direct sum splitting
\begin{equation*}
 \cB_h=\cB_h^{(nre)}\oplus\cB_h^{(re)}. 
\end{equation*}
By definition, for any $E\in \cB_h^{(nre)}$, there exist $f=\sum_{k\in \Lambda_1}
\hat{f}(k)e^{2\pi i \langle k,\theta\rangle}\in \cB_h(\R)$,  $w=\sum_{k\in \Lambda_2}
\hat{w}(k)e^{2\pi i \langle k,\theta\rangle}\in \cB_h
(\C)$, such that $E=fJ_1+ \Re wJ_2+ \Im wJ_3$. We can express $E$ explicitly 
\begin{align*}
 E=&\left(\sum_{k\in \Lambda_1}\hat{f}(k)e^{2\pi i \langle k,\theta\rangle}\right)J_1\\
&+\frac{1}{2}\left(\sum_{k\in \Lambda_2}\hat{w}(k)e^{2\pi i \langle k,\theta\rangle}
+\sum_{k\in \Lambda_2}\overline{\hat{w}(k)}e^{-2\pi i \langle k,\theta\rangle}\right)J_2\\
&-\frac{i}{2}  \left(\sum_{k\in \Lambda_2}\hat{w}(k)e^{2\pi i \langle k,\theta\rangle}
-\sum_{k\in \Lambda_2}\overline{\hat{w}(k)}e^{-2\pi i \langle k,\theta\rangle}\right)J_3.
\end{align*}
Then we compute that
\begin{align*}
\partial_{\omega} E=&2\pi i \left(\sum_{k\in \Lambda_1}\langle k,\, \omega\rangle\hat{f}(k)e^{2\pi i \langle k,\theta\rangle}\right)J_1\\
&+\pi i \left(\sum_{k\in \Lambda_2}\langle k,\, \omega\rangle\hat{w}(k)e^{2\pi i \langle k,\theta\rangle}
-\sum_{k\in \Lambda_2}\langle k,\, \omega\rangle\overline{\hat{w}(k)}e^{-2\pi i \langle k,\theta\rangle}\right)J_2\\
&+\pi  \left(\sum_{k\in \Lambda_2}\langle k,\, \omega\rangle\hat{w}(k)e^{2\pi i \langle k,\theta\rangle}
+\sum_{k\in \Lambda_2}\langle k,\, \omega\rangle\overline{\hat{w}(k)}e^{-2\pi i \langle k,\theta\rangle}\right)J_3,
\end{align*}
 and
\begin{align*}
[C,E]=&\pi i \rho   \left(\sum_{k\in \Lambda_2}\hat{w}(k)e^{2\pi i \langle k,\theta\rangle}
-\sum_{k\in \Lambda_2}\overline{\hat{w}(k)}e^{-2\pi i \langle k,\theta\rangle}\right)J_2\\
&+\pi \rho \left(\sum_{k\in \Lambda_2}\hat{w}(k)e^{2\pi i \langle k,\theta\rangle}
+\sum_{k\in \Lambda_2}\overline{\hat{w}(k)}e^{-2\pi i \langle k,\theta\rangle}\right)J_3,
\end{align*}
where we recall that $C=2\pi\rho J_1$, $\rho\in\R$. It follows that
$$
 \partial_{\omega} E\in \cB_h^{(nre)},\quad  [C,E]\in \cB_h^{(nre)}.
$$
Moreover, by direct computation, we have
\be\label{non-res-div}
\|\partial_{\omega} E-[C,E]\|^\#_h\geq \pi \epsilon^{\frac{1}{4}} \|E\|^\#_h.
\ee
Hence the two characterizing conditions are satisfied therefore the space $\cB_h^{(nre)}$ is well-defined.

\subsubsection{Proof of Proposition~\ref{prop.scheme}}
Based on the nice space $\cB^{(nre)}$ defined in the preceding section, we can now prove Proposition~\ref{prop.scheme}. We will also make use of Floquet theory.

\begin{proof}[Proof of Proposition~\ref{prop.scheme}]
We start with the linear system
\be
\label{start.lin.sys.0}
\begin{cases}
\dot{x}=(C+F(\theta))x\\
\dot{\theta}=\omega,
\end{cases}
\ee
satisfying the assumptions \eqref{cond.1}, \eqref{cond.2}, \eqref{cond.3}. Motivated by the quantitative estimate \eqref{non-res-div} of $\cB^{(nre)}$, we work with $\eta=\pi\e^{\frac{1}{4}}.$
Then \eqref{cond.1} provides that
$$||F||_h^\#=\e<10^{-8}\eta,$$
which enables us to apply Lemma \ref{one-step-KAM}. It shows the existence of $Y\in \cB_h$, $H\in \cB_h^{(re)}$, such that $e^Y$ conjugates (\ref{start.lin.sys.0}) to
\be
\label{sys-0-pf}
\begin{cases}
\dot{x}=(C+H(\theta))x\\
\dot{\theta}=\omega,
\end{cases}
\ee
and the estimates hold
\be
\label{es.Y.H.pf}
||Y||^\#_h\leq \frac{10}{\eta}\e< \e^{\frac{1}{2}},\quad ||H||^\#_h\leq 2\e,
\ee
where \eqref{cond.1} is used for the first inequation. By construction of $\cB_h^{(re)}$ in  \eqref{def.B.h.re}, there exist 
\be\label{G-f-w}
  f=\sum_{k\in \Lambda_1^c}
\hat{f}(k)e^{2\pi i \langle k,\theta\rangle}\in \cB_h(\R),
 \quad w=\sum_{k\in \Lambda_2^c}
\hat{w}(k)e^{2\pi i \langle k,\theta\rangle}\in\cB_h(\C),
\ee
such that
$$H=fJ_1+\Re w J_2+ \Im w J_3.$$

\medskip
To study further conjugation of \eqref{sys-0-pf}, we separate into two cases depending on $\Lambda_2^c$.

{\it Case 1}. If $\Lambda_2^c\cap\{k\in\Z^2||k|<\frac{q_+}{6}\}=\emptyset$, then $\mathcal{T}_{\frac{q_+}{6}}
H=\mathcal{T}_{\frac{q_+}{6}}f J_1$. By Lemma~\ref{small-div-Lem},
$$
\Lambda_1^c \cap\{k\in\Z^2~|~|k|<\frac{q_+}{6}\}\subseteq \{l(q,-p)~|~l\in\Z\}.
$$
Correspondingly, we define
\begin{align}\label{B-form-case1}
B_1(\theta):= &\exp\left(\sum_{k\in\Lambda_1^c\backslash \{0\},~|k|<\frac{q_+}{6}}\frac{\hat{f}(k)e^{2\pi i\langle k,\theta \rangle}}{2\pi i\langle k,\,\omega\rangle}J_1\right)\nonumber\\=&
\exp\left(\sum_{l(q,-p)\in\Lambda_1^c\backslash \{0\},|l(q,-p)|<\frac{q_+}{6}}\frac{\hat{f}(k)e^{2\pi i \langle l(q,-p),\theta \rangle}}{2\pi i l (q\alpha-p)}J_1 \right).
\end{align}
Immediately, $B_1$ conjugates (\ref{sys-0-pf}) to
\begin{equation}
\label{sys.1.end}
\begin{cases}
\dot{x}=(C_++ F_+)x\\
\dot{\theta}=\omega,
\end{cases}
\end{equation}
where $C_+= C+\hat{H}(0)$, $F_+= \Ad(B_1)\mathcal{R}_{\frac{q_+}{6}} H$.
By the assumptions \eqref{cond.1} and \eqref{cond.2}, for $h_+:= \frac{h}{6(L+2)}$, the construction of $B_1$ \eqref{B-form-case1} provides the bound
\begin{eqnarray}\label{B-est.-case1}
 \|B_1\|^\#_{h_+} \leq e^{\frac{q_+\epsilon}{\pi}} \ll  e^{\frac{1}{2(L+1)} q_+ h_+}.
\end{eqnarray}

Let $B=B_1e^Y$. Then $B$ conjugates \eqref{start.lin.sys.0} to \eqref{sys.1.end}. For $\epsilon_+=\|F_+\|^\#_{h_+}$, by \eqref{B-est.-case1}, \eqref{es.Y.H.pf}, \eqref{cond.1}, we compute that
\begin{align}\label{F+-est.-case1}
\epsilon_+\left(\|B\|^\#_{h_+}\right)^{2L}
\leq  &\|\mathcal{R}_{\frac{q_+}{6}} H\|^\#_{h_+}\cdot \left(\|B_1\|^\#_{h_+}\right)^{2(L+1)}\cdot \left(\|e^Y\|^\#_{h_+}\right)^{2L}\nonumber\\
\leq & 2\epsilon
e^{-\frac{2 \pi q_+\cdot (6L+11) h_+ }{6}} \cdot  e^{ q_+ h_+}\cdot e^{2L\e^{\frac{1}{2}}}\nonumber\\
\ll&  \e^{2} e^{- q_+ h_+}.
\end{align}
Hence \eqref{est.-prop.scheme} holds, which completes the proof in Case 1.

\bigskip

{\it Case 2}. If $\Lambda_2^c\cap\{k\in\Z^2||k|<\frac{q_+}{6}\}\neq\emptyset$, then there exists $k_*\in\Lambda_2^c$,  such that
\begin{equation*}
 |k_*|=\min_{k\in\Lambda_2^c}|k|,\quad 0\leq |k_*|<\frac{q_+}{6}.
\end{equation*}
We deal with $k_*$ through a rotation step as follows.
\begin{lem}\label{rot-Lem}
Let $Q=\exp\{-2\pi \langle k_*,\, \theta\rangle J_1\}$, $C_1= 2\pi(\rho-\langle k_*,\theta \rangle )J_1$,
\be
\label{def.F.1.lem}
F_1=fJ_1 + \Re \{w e^{-2\pi \langle k_*,\,\theta\rangle} \}  J_2+ \Im \{w e^{-2\pi \langle k_*,\,\theta\rangle} \} J_3.
\ee
Then $Q$ conjugates (\ref{sys-0-pf}) to the system
\be\label{sys-1-pf-case2}
\begin{cases}
\dot{x}=(C_1+F_1(\theta))x\\
\dot{\theta}=\omega,
\end{cases}
\ee
and the estimates hold
\be\label{est.-rot-Lem}
\|C_1\|\leq 2\pi \epsilon^{\frac{1}{4}},\qquad \|F_1\|^\#_{\frac{h}{3}}\leq 2\epsilon.
\ee
Moreover, $\mathcal{T}_{\frac{q_+}{6}}F_1$ is in the form
\be\label{res-form-rot-Lem}
\mathcal{T}_{\frac{q_+}{6}}F_1=\sum_{k\in(q,-p)\Z, |k|<\frac{q_+}{6}} \hat{F_1}(k)e^{2\pi i\langle k,\theta\rangle}.
\ee
\end{lem}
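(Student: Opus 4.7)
The lemma is essentially a direct calculation on $\fso(3,\R)$ using the commutation relations among $J_1, J_2, J_3$, combined with the Fourier-support information provided by \eqref{Lambda-relation} and Lemma~\ref{small-div-Lem}. I would organize the proof in four steps.

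\textbf{Step 1 (Computation of the constant part $C_1$).} Since $Q(\theta)=\exp(-2\pi\langle k_*,\theta\rangle J_1)$ commutes with every multiple of $J_1$, one has $QCQ^{-1}=C=2\pi\rho J_1$ and $(\partial_\omega Q)Q^{-1}$ is a scalar multiple of $J_1$, computed by direct differentiation. Plugging into the conjugation identity $A_+=QAQ^{-1}-(\partial_\omega Q)Q^{-1}$ (with $A=C+H$), the $J_1$-portion of the new constant becomes $C_1=2\pi(\rho-\langle k_*,\omega\rangle)J_1$. The bound $\|C_1\|\le 2\pi\epsilon^{1/4}$ is then immediate from $k_*\in\Lambda_2^c$, i.e.\ $|\langle k_*,\omega\rangle-\rho|<\epsilon^{1/4}$.

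\textbf{Step 2 (Computation of $F_1$).} The $f J_1$ piece of $H$ is invariant under $\Ad Q$ since $[J_1,Q]=0$. The adjoint action of $Q$ on $J_2,J_3$ is a planar rotation in the $(J_2,J_3)$-plane by angle $2\pi\langle k_*,\theta\rangle$, which follows from the identities $[J_1,J_2]=J_3$ and $[J_1,J_3]=-J_2$ via the exponential series. Applying this rotation to $\Re w\, J_2+\Im w\, J_3$ and rewriting $\Re w\cos(2\pi\langle k_*,\theta\rangle)+\Im w\sin(2\pi\langle k_*,\theta\rangle)=\Re\{we^{-2\pi i\langle k_*,\theta\rangle}\}$, together with its imaginary analogue, yields \eqref{def.F.1.lem}.

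\textbf{Step 3 (Norm estimate).} Using the isometry $\|f_1J_1+f_2J_2+f_3J_3\|_{h'}^\#=(\sum_s(\|f_s\|_{h'}^\#)^2)^{1/2}$ stated earlier in the paper, it suffices to bound each scalar component. The $J_1$-component equals $f$ with $\|f\|_h^\#\le\|H\|_h^\#\le 2\epsilon$. For the other two, multiplication by $e^{\pm 2\pi i\langle k_*,\theta\rangle}$ shifts Fourier modes by $k_*$, producing at most a factor $e^{2\pi|k_*|h/3}$ on the strip of width $h/3$; the condition $|k_*|<q_+/6$ together with the smallness hypothesis \eqref{cond.1} absorbs this factor and preserves the bound $\|F_1\|_{h/3}^\#\le 2\epsilon$.

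\textbf{Step 4 (Fourier support after truncation).} This is the delicate combinatorial point. The $f$ piece of $F_1$ has Fourier support in $\Lambda_1^c$; truncating at $|k|<q_+/6$ and applying Lemma~\ref{small-div-Lem} places these modes in $(q,-p)\Z$. For the other two components, writing $\Re$ and $\Im$ as $\frac{1}{2}(u\pm\tilde u)$ in the analytic sense shows that the Fourier support of $\Re\{we^{-2\pi i\langle k_*,\theta\rangle}\}$ and $\Im\{we^{-2\pi i\langle k_*,\theta\rangle}\}$ lies in $(\Lambda_2^c-k_*)\cup(k_*-\Lambda_2^c)$. The first set is in $\Lambda_1^c$ by \eqref{Lambda-relation}, and the second is the image of the first under $k\mapsto -k$; since $\Lambda_1^c$ is symmetric under $k\mapsto -k$ (the condition $|\langle k,\omega\rangle|<2\epsilon^{1/4}$ is even in $k$), both lie in $\Lambda_1^c$. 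Applying Lemma~\ref{small-div-Lem} once more to the truncation at $|k|<q_+/6$ yields \eqref{res-form-rot-Lem}.

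\textbf{Main obstacle.} The principal technical point is Step 4: tracking the Fourier support of $\Re\{we^{-2\pi i\langle k_*,\theta\rangle}\}$ forces one to handle both the shift by $+k_*$ and the shift by $-k_*$ arising from the analytic extension of complex conjugation, and to argue that both shifted sets are resonant. Step 3 also requires calibration of the strip losses so that the factor $e^{2\pi|k_*|h/3}$ from the Fourier shift is dominated by the hypotheses on $\epsilon$ and $q_+$; the specific choice of $h/3$ in the statement is dictated by this balance.
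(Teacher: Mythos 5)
Your proposal follows the same overall structure as the paper's proof: direct conjugation computation for $C_1$ (Step~1), rotation of $J_2,J_3$ under $\Ad(Q)$ giving \eqref{def.F.1.lem} (Step~2), and tracking Fourier supports to deduce \eqref{res-form-rot-Lem} (Step~4). In Step~4 you are in fact more explicit than the paper, which tracks only the support of $w\,e^{-2\pi i\langle k_*,\theta\rangle}$ and leaves the reflection caused by taking $\Re$ and $\Im$ implicit; your observation that the reflected set $k_*-\Lambda_2^c=-(\Lambda_2^c-k_*)$ is also covered, via the evenness of $\Lambda_1^c$, is a sound way to close this small gap (one can equally appeal to the symmetry of $(q,-p)\Z$). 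Steps~1, 2 and 4 are therefore fine.

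Step~3, however, contains a genuine error in the mechanism. You claim that the factor $e^{2\pi|k_*|h/3}$ coming from the Fourier shift is absorbed by ``$|k_*|<q_+/6$ together with the smallness hypothesis \eqref{cond.1}.'' This is not so: \eqref{cond.1} gives a \emph{lower} bound $q_+h>2\ln(1/\epsilon)$, not an upper bound, so $e^{2\pi|k_*|h/3}$ can be at least $\epsilon^{-2\pi/9}$ and has no uniform bound in terms of the stated hypotheses. What actually makes the estimate work is the \emph{minimality} of $|k_*|$: since $|k_*|=\min_{k\in\Lambda_2^c}|k|$, every Fourier mode $k$ in the support of $w$ satisfies $|k|\geq|k_*|$, hence $|k-k_*|\leq|k|+|k_*|\leq 2|k|$, and so
\[
\sum_{k\in\Lambda_2^c}|\hat w(k)|\,e^{2\pi|k-k_*|h/3}\;\leq\;\sum_{k\in\Lambda_2^c}|\hat w(k)|\,e^{4\pi|k|h/3}=\|w\|^{\#}_{2h/3}\leq\|w\|^{\#}_h.
\]
Equivalently, $\|w\|^\#_{h/3}\leq e^{-4\pi|k_*|h/3}\|w\|^\#_h$ on the support of $w$, so the Banach-algebra factor $e^{2\pi|k_*|h/3}$ is \emph{more} than compensated. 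This is exactly why the loss of analyticity strip from $h$ to $h/3$ (a factor strictly greater than $2$) is chosen. Smallness of $\epsilon$ and the crude bound $|k_*|<q_+/6$ play no role here, and citing them is misleading; your ``main obstacle'' paragraph repeats the same incorrect calibration. Replace the absorption argument in Step~3 by the minimality argument above and the proof is correct.
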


\begin{proof}
Direct computation shows that  $Q$ conjugates (\ref{sys-0-pf}) to $\begin{cases}
\dot{x}=(C_1+\Ad(Q) H(\theta))x\\
\dot{\theta}=\omega,
\end{cases}$. In addition, we compute that
\begin{align}
\label{F-form-pf-rot-Lem}
\Ad(Q)H=&\Ad(Q) \cdot (fJ_1+\Re w J_2+ \Im w J_3)\nonumber\\
=& fJ_1+ \Re w Ad(Q)\cdot J_2+ \Im w Ad(Q) \cdot J_3\nonumber\\
=& fJ_1+ \{\Re w \cos 2\pi \langle k_*,\,\theta\rangle + \Im w\sin 2\pi \langle k_*,\,\theta\rangle\}J_2 \nonumber\\
&+\{\Im w\cos 2\pi \langle k_*,\,\theta\rangle-\Re w \sin 2\pi \langle k_*,\,\theta\rangle\}J_3 \nonumber\\
=& fJ_1 + \Re \{w e^{-2\pi \langle k_*,\,\theta\rangle} \}  J_2+ \Im \{w e^{-2\pi \langle k_*,\,\theta\rangle} \} J_3.
\end{align}
Hence we deduce \eqref{def.F.1.lem} for $F=\Ad(Q)H$.

Recall \eqref{G-f-w}, we can rewrite
\be\label{w-form-pf-rot-Lem}
w e^{-2\pi \langle k_*,\,\theta\rangle}=\sum_{k\in\Lambda_2^c} \hat{w}(k) e^{2\pi i \langle k-k_*,\,\theta\rangle}
=\sum_{k\in\Lambda_2^c-k_*} \hat{w}(k+k_*) e^{2\pi i \langle k,\,\theta\rangle}.
\ee
As $k_*\in\Lambda_2^c$, by (\ref{Lambda-relation}) and (\ref{est.2-small-div-Lem}), we have
$$
(\Lambda_2^c-k_*)\cap\{k\in\Z^2~|~|k|<\frac{q_+}{6}\}\subseteq \Lambda_1^c\cap\{k\in\Z^2~|~|k|<\frac{q_+}{6}\}\subseteq\{l(q,-p)~|~l\in\Z\}.
$$
Therefore we get the truncated form of $F$ \eqref{res-form-rot-Lem}.
\end{proof}

To do further conjugation of \eqref{sys-1-pf-case2}, we firstly work with the  truncated system
\be\label{trunc-sys-1-pf-case2}
\begin{cases}
\dot{x}=(C_1+\mathcal{T}_{\frac{q_+}{6}}F_1)x\\
\dot{\theta}=\omega.
\end{cases}
\ee
By the precise from of $\mathcal{T}_{\frac{q_+}{6}}F_1$ given in \eqref{res-form-rot-Lem},  we can apply Lemma \ref{F-Lem}. By (\ref{est.-rot-Lem}),
\be\label{est.-trunc-sys-1-pf-case2}
\|C_1+\mathcal{T}_{\frac{q_+}{6}}F_1\|^\#_{\frac{h}{3}}\leq 10 \epsilon^{\frac{1}{4}}.
\ee
Applying Lemma \ref{F-Lem} to the system \eqref{trunc-sys-1-pf-case2} on the domain of $|\Im \theta|\leq \frac{h}{3}$,  there exists  $B_1\in C^\omega_{\frac{h}{3}}(\T^2,\SO(3,\R))$  which conjugates (\ref{trunc-sys-1-pf-case2}) to
 $\begin{cases}
\dot{x}=C_+x\\
\dot{\theta}=\omega
\end{cases}$ for some $C_+\in \fso(3,\R)$.  By (\ref{est.-trunc-sys-1-pf-case2}), Lemma \ref{F-Lem} and Lemma \ref{small-div-Lem}, we have the estimate
$$
\|B_1\|_{\frac{h}{3}}\leq e^{15 q_+ q \epsilon^{\frac{1}{4}}h}\leq e^{ \frac{1}{(L+1)}q_+ h_+}.
$$
Then by the assumptions \eqref{norm.2}, \eqref{cond.1}, we deduce the estimate for $h_+=\frac{h}{6(L+2)}$, 
\begin{eqnarray}\label{B-est.1-pf-using-F-Lem}
\|B_1\|^\#_{h_+}\leq  \frac{72(L+2)^2}{(2L+3)^2 h^2}
e^{ \frac{1}{(L+1)}q_+ h_+}\leq 200 \e^{\frac{1}{L+1}}e^{ \frac{1}{2(L+1)}q_+ h_+}
\leq e^{ \frac{1}{2(L+1)}q_+ h_+}.
\end{eqnarray}

Consequently,  $B_1$ conjugates \eqref{sys-1-pf-case2} to 
\be
\label{sys.end.case.2}
\begin{cases}
\dot{x}=(C_++F_+(\theta))x\\
\dot{\theta}=\omega,
\end{cases}
\ee
where $F_+= \Ad(B_1)\mathcal{R}_{\frac{q_+}{6}} F_1$. To sum up, let
\[ B= B_1 Q e^{Y}=B_1\exp\{-2\pi \langle k_*,\, \theta\rangle J\}e^{Y}.\]
Then $B$ conjugates  \eqref{start.lin.sys.0} to \eqref{sys.end.case.2}.

By \eqref{cond.1} and (\ref{B-est.1-pf-using-F-Lem}), recalling $|k_*|\leq \frac{1}{6}q_+$, for $\e_+=\|F_+\|^\#_{h_+}$, we compute that
\begin{align*}
\e_+\left(\|B\|^\#_{h_+}\right)^{2L} \leq & \|\mathcal{R}_{\frac{q_+}{6}}F_1\|^\#_{h_+} \e_+ \left(\|B_1\|^\#_{h_+} \right)^{2L+2}  \left(\|Q\|^\#_{h_+}\right)^{2L}  \left(\|e^{Y}\|^\#_{h_+}\right)^{2L}\nonumber\\
\leq & 2\e e^{-2\pi \cdot \frac{q_+}{6}\cdot 2(L+\frac{3}{2})h_+}\cdot  e^{ q_+h_+}\cdot \left(e^{2\pi\cdot \frac{1}{6}q_+h_+}\right)^{2L} \cdot 2^{2L}\nonumber\\
\ll&  \e^2 e^{- q_+ h_+}.
\end{align*}
Hence (\ref{est.-prop.scheme}) holds, which completes the proof in Case 2. Therefore we have completed the proof of Proposition~\ref{prop.scheme}.

\end{proof}

\section{Local results II. close to $\exp(Dx+C)$ with $D\neq0$}
\label{sect.n0}

When restricted in $3$ dimension, since the dynamical degree $D\in\fso(3,\R)$, we could always conjugate it to $2\pi dJ_1$ for some $d\in\R$. By Theorem~\ref{sect4.0.2}, $\exp D=I$. It forces $d\in\Z$.

\medskip

In this section, we study the normal form in the neighborhood of $(\a,\exp(Dx+C))$ for $D,C\in\fso(3,\R)$, where $D=2\pi dJ_1$, $d\in\Z$, $C\in\fz(D)$. By Lemma~\ref{cen}, $C$ must be in the form $cJ_1$ for some $c\in\R$. We will prove the following theorem.
\begin{thm}
\label{thm.4.1}
Let $h>0$, $\gamma>0,\sigma>0$,  $d\in\Z$. There exists $\e>0$ such that for $\a\in \DC(\gamma,\sigma)$, $A\in C_h^\omega(\T,\SO(3,\R))$, if $||A-\exp(2\pi(dx+c)J_1)||_h\leq\e$ for some $c\in\R$ and $(\a,A)$ has dynamical degree $dJ_1$, then $(\a,A)$ is $C^\omega$-conjugate to $(\a,\exp(2\pi(dx+c')J_1))$ for some $c'\in\R$.
\end{thm}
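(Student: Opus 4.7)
I would run a quadratic KAM iteration converging directly to the exact normal form. Write $A(x)=A_0(x)\exp F(x)$ with $A_0(x)=\exp(2\pi(dx+c)J_1)$ and $F\in C_h^\omega(\T,\fso(3,\R))$ small; at each step I seek a conjugacy $B=\exp Y$ with $Y\in C^\omega(\T,\fso(3,\R))$ such that $B(\cdot+\a)A(\cdot)B(\cdot)^{-1}=A_0'\exp F^+$ where $A_0'(x)=\exp(2\pi(dx+c')J_1)$ and $\|F^+\|$ is of order $\|F\|^2$ on a slightly smaller strip. Since $A_0$ takes values in the abelian one-parameter subgroup generated by $J_1$, the first-order linearized equation reads
\[\Ad(A_0(x)^{-1})\,Y(x+\a)-Y(x)=2\pi(c'-c)J_1-F(x).\]

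Decompose $Y=y_1J_1+\Re(w)J_2+\Im(w)J_3$ and $F$ analogously. Since $\Ad(A_0(x)^{-1})$ fixes $J_1$ and acts on the complexified $(J_2,J_3)$-plane by multiplication by $e^{-2\pi i(dx+c)}$, the equation decouples into a scalar cohomological equation
\[y_1(x+\a)-y_1(x)=2\pi(c'-c)-F_1(x)\]
and a twisted equation
\[e^{-2\pi i(dx+c)}w(x+\a)-w(x)=-F_w(x).\]
For the scalar part I choose $2\pi(c'-c)=\hat F_1(0)$ to kill the zero Fourier mode, then invert by Fourier series using $\a\in\DC(\gamma,\sigma)$. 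For the twisted equation, Fourier transform produces a shift-by-$d$ recurrence
\[e^{-2\pi ic}e^{2\pi i(k+d)\a}\hat w(k+d)-\hat w(k)=-\hat F_w(k)\]
on $\Z$, which decomposes into one-dimensional recurrences along the $d$-orbits $\{k_0+nd:n\in\Z\}$. Quantitative solvability of each such recurrence, with analytic bounds depending on $\gamma,\sigma$, is obtained by combining the Diophantine condition on $\a$ with the assumption that the dynamical degree equals $dJ_1$: being a conjugation invariant, the dynamical degree pins the shift $d$ in the target normal form to be exactly the one we started with, excluding the cohomological obstruction that would otherwise prevent solving the twisted recurrence.

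Once the linearized step is in place, standard Newton estimates yield $c_{n+1}=c_n+O(\|F_n\|)$, a conjugacy $B_n=\exp Y_n$ with $\|Y_n\|\leq C\|F_n\|$ up to the usual polynomial loss of analyticity, and $\|F_{n+1}\|\leq C\|F_n\|^2$ on a slightly narrower strip. The sequence $(c_n)$ is Cauchy with limit $c'$, and the product $B_n\cdots B_0$ converges analytically on a strip of positive width, giving the required conjugacy of $(\a,A)$ to $(\a,\exp(2\pi(dx+c')J_1))$. The main obstacle is the twisted cohomological equation on $w$: the shift by $d$ in Fourier space produces a non-standard small-divisor structure, and one must check that the dynamical degree hypothesis, propagated step by step under conjugation, continues to trivialize the relevant cohomological obstruction at every iteration. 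This is exactly the point where the assumption on the dynamical degree enters essentially, in contrast to the scalar $J_1$-component where only $\a\in\DC(\gamma,\sigma)$ is needed.
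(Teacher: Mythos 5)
Your overall architecture (quadratic KAM, decomposition with respect to the adjoint action of the constant normal form, split into a scalar $J_1$-component and a twisted $(J_2,J_3)$-component) matches the paper's proof. The decoupling of the linearized equation into a classical cohomological equation for the $J_1$-part and a twisted equation for the complexified $(J_2,J_3)$-part is exactly what the paper does (their basis $\{w_1,w_2,w_3\}$ with $l_3=0$ is your $(w,\bar w,J_1)$). But there is a genuine gap at the twisted step.

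You correctly observe that the twisted equation
\[
e^{-2\pi i(dx+c)}\,w(x+\a)-w(x)=-F_w(x)
\]
has a nontrivial finite-dimensional obstruction (the shift-by-$d$ recurrence in Fourier space over-determines the bounded solution). What you do not supply is the mechanism by which the dynamical degree hypothesis actually controls this obstruction. It does \emph{not} make the obstruction vanish: as in Krikorian's Proposition~8.1 (the paper's Lemma~\ref{ln0}), one can always solve the twisted equation up to a residual trigonometric polynomial $P_{\a,d,c}(F_w)$ of degree $|d|$, and the resulting estimate is only \emph{linear} in $\|F_w\|$, namely $\|P_{\a,d,c}(F_w)\|\lesssim \|F_w\|/h$. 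So after conjugating you are left with a new error $\varphi_1 + F_1$, where $\varphi_1=P_{\a,d,c}(F_w)$ lies entirely in the resonant subspace and is only $O(\|F\|)$, while $F_1$ is the genuine quadratic remainder. Naively, quadratic convergence fails.

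The missing ingredient is the length estimate (the paper's Lemma~\ref{lem.length}, taken from Karaliolios): if $(\a,\exp(2\pi(dx+c)J_1)\exp\varphi)$ has dynamical degree $2\pi d J_1$, then
\[
\|\Pr\varphi\|_{L^2}\ \le\ \text{cst}\cdot\|(I-\Pr)\partial\varphi\|_{L^2},
\]
where $\Pr$ is the projection onto the resonant (obstruction) modes. Applying this to $\varphi_1+F_1$, and using that $\varphi_1=\Pr\varphi_1$ so that $(I-\Pr)(\varphi_1+F_1)=(I-\Pr)F_1$, one gets $\|\varphi_1+\Pr F_1\|\lesssim\|F_1\|$, i.e.\ the obstruction is \emph{a posteriori} quadratically small even though it was constructed with only a linear bound. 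This is where the dynamical degree enters and is the heart of the proof; without it the iteration does not close. Your proposal flags this as ``the main obstacle'' and asserts that the dynamical degree ``trivializes the relevant cohomological obstruction,'' but that is precisely the statement that needs a proof, and it is not a formal consequence of conjugacy-invariance of the degree — it requires the quantitative $L^2$ estimate comparing resonant and non-resonant parts.

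A secondary remark: you write that quantitative solvability of the twisted recurrence requires both the Diophantine condition on $\a$ and the degree hypothesis. In fact the Diophantine condition is only used for the scalar $J_1$-component (Lemma~\ref{l=0}); the twisted component has no small divisors (the estimate in Lemma~\ref{ln0} holds for all irrational $\a$), and what it needs instead is the length estimate above. Separating these two roles would make the argument cleaner.
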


\subsection{Cohomological equation}
Let $\a\in\DC(\gamma,\sigma)$. $A$ can be written in the form
$$A(x)=\exp(2\pi(dx+c)J_1)\exp \ph(x),$$
where $\ph\in C^\omega_h(\T,\fso(3,\R))$ satisfies $||\ph||_h\leq\e$. We try to find the simplest function $\ph'\in C^\omega_{h'}(\T,\fso(3,\R))$ and a $\psi\in C^\omega_{h'}(\T,\fso(3,\R))$ for some $0<h'<h$ such that
\be
\exp \psi(x+\a)\exp(2\pi(dx+c)J_1)\exp \ph(x)\exp(- \psi(x))=\exp(2\pi(dx+c)J_1)\exp \ph(x).
\ee
If $\ph$ is small enough and if we seek a small $\psi$, the linearized equation becomes
\be
\label{co.U}
\Ad(\exp(-2\pi(dx+c)J_1))\psi(x+\a)+\ph(x)-\psi(x)=\ph'(x).
\ee

Let $\{w_1,w_2,w_3\}$ be a basis of $\fso(3,\C)$
 given by the adjoint action of $\exp(2\pi(dx+c)J_1)$, i.e.,
$$\exp(-2\pi(dx+c)J_1)w_k\exp(2\pi(dx+c)J_1)=e^{2\pi i(l_kx+\la_k)}w_k,\quad k=1,2,3,$$
for some $l_k,\la_k\in\C$. As $\exp (2\pi dJ_1)=I$ we have $l_k\in\Z$. We express $\ph$ with respect to the basis $\ph(x)=\sum_{k=1}^3\ph_k(x)w_k.$ Then \eqref{co.U} becomes
\be
e^{2\pi i(l_kx+\la_k)}\psi_k(x+\a)+\ph_k(x)-\psi_k(x)=0,\quad k=1,2,3.
\ee

By Lemma~\ref{cen}, there exits only one $k\in\{1,2,3\}$ such that $l_k=0$. We assume that $l_3=0$. Then Lemma~\ref{cen} gives that  $w_3=J_3$. Therefore $\la_3=0$. In this case, we have the following classical lemma:
\begin{lem}
\label{l=0}
Let $\a\in\DC(\gamma,\sigma)$, $h>0$, $\ph\in C_h^\omega(\T,\C)$. For $0<h'<h$, there exists $\psi\in C^\omega_{h'}(\T,\C)$, such that
$$-\ph(x)+\hat\ph(0)=\psi(x+\a)-\psi(x),$$
and
$$||\psi||_{h'}\leq \cst.||\ph||_h,$$
where the constant depends only on $\gamma$.
\end{lem}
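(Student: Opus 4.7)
The plan is to solve the cohomological equation $\psi(x+\alpha)-\psi(x)=-\varphi(x)+\hat\varphi(0)$ in Fourier space, which is the standard approach for such Diophantine small-divisor problems.

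First, I would expand $\varphi\in C^\omega_h(\T,\C)$ as a Fourier series $\varphi(x)=\sum_{k\in\Z}\hat\varphi(k)e^{2\pi ikx}$, with the standard exponential decay estimate $|\hat\varphi(k)|\leq\|\varphi\|_h e^{-2\pi|k|h}$ coming from analyticity on the strip of width $h$. I would then seek $\psi$ of the form $\psi(x)=\sum_{k\neq 0}\hat\psi(k)e^{2\pi ikx}$ (the zero mode of $\psi$ is free, so I set it to zero). Matching Fourier coefficients in the equation gives, for each $k\neq 0$,
\begin{equation*}
\hat\psi(k)=\frac{-\hat\varphi(k)}{e^{2\pi ik\alpha}-1}.
\end{equation*}
Note that the right-hand side of the cohomological equation has zero mean by construction, so no obstruction appears at $k=0$.

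Next I would bound the small divisors. Since $\alpha\in\DC(\gamma,\sigma)$, for any $k\neq 0$ there exists $p\in\Z$ with $|k\alpha-p|\geq\gamma/|k|^\sigma$, and hence
\begin{equation*}
|e^{2\pi ik\alpha}-1|=2|\sin(\pi k\alpha)|\geq 4\|k\alpha\|_\Z\geq \frac{4\gamma}{|k|^\sigma}.
\end{equation*}
Combining this with the Fourier decay of $\hat\varphi$, I get
\begin{equation*}
|\hat\psi(k)|\leq\frac{|k|^\sigma}{4\gamma}\|\varphi\|_h e^{-2\pi|k|h}.
\end{equation*}

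Finally I would verify the analyticity and size of $\psi$ on the narrower strip $\T_{h'}$. Summing the Fourier series,
\begin{equation*}
\|\psi\|_{h'}\leq\sum_{k\neq 0}|\hat\psi(k)|e^{2\pi|k|h'}\leq\frac{\|\varphi\|_h}{4\gamma}\sum_{k\neq 0}|k|^\sigma e^{-2\pi|k|(h-h')},
\end{equation*}
and the remaining series is a convergent geometric-type sum in $|k|$, giving a finite constant depending on $\gamma,\sigma,h-h'$ (the statement in the lemma writes only $\gamma$, but the dependence on $\sigma$ and the loss $h-h'$ is implicit in the constant). There is no real obstacle here: the whole argument is the textbook Diophantine cohomological-equation estimate, and the only care needed is making sure the small-divisor lower bound is used correctly so the factor $1/\gamma$ appears linearly. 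Uniqueness up to an additive constant follows because a cocycle with zero coboundary data and zero mean must vanish in the chosen gauge.
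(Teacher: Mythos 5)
Your proof is correct and is the standard Fourier-mode / small-divisor argument; the paper itself does not give a proof of this lemma (it is labelled ``classical''), so yours is exactly the textbook derivation the paper implicitly invokes. Your remark that the constant necessarily also depends on $\sigma$ and on the loss $h-h'$ (not just on $\gamma$, as the lemma's phrasing suggests) is accurate — the paper's statement is merely emphasizing uniformity over the Diophantine class $\DC(\gamma,\sigma)$ rather than genuine independence from the other parameters.
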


For $k\in\{1,2\}$, $l_k\neq0$. Similar to Proposition 8.1 in \cite{K01}, we have the following lemma.
\begin{lem}
\label{ln0}
Let $\a\in\R\backslash\Q, h>0, \ph\in C_h^\omega(\T,\C), l\in\Z\backslash\{0\}, \la\in\R$.
For $0<h'<h$, there exists $\psi\in C^\omega_{h'}(\T,\C)$ such that
\be
e^{2\pi i(lx+\la)}\psi(x+\a)-\psi(x)=-\ph(x)+P_{\a,l,\la}(\ph)(x),
\ee
where
$$P_{\a,l,\la}(\ph)(x)=
\begin{cases}
\sum_{m=0}^{-l-1}e^{2\pi imx}P_{\a,l,\la}(\ph)(m),\quad\text{if }l<0;\\
\sum_{m=-1+1}^{0}e^{2\pi imx}P_{\a,l,\la}(\ph)(m),\quad\text{if }l>0,\\
\end{cases}$$
\begin{eqnarray*}P_{\a,l,\la}(\ph)(m)&=&\sum_{j=0}^\infty\hat\ph(m-jl)e^{2\pi i\{[(j+1)m-\frac{j(j-1)}{2}l]\a+(j+1)\la\}} \\
&+&\sum_{j=1}^\infty\hat\ph(m+jl)e^{-2\pi i\{[jm+\frac{j(j+1)}{2}l]\a+j\la\}},\end{eqnarray*} and the following estimates hold:
$$||P_{\a,l,\la}(\ph)||_{h'}\leq ||\varphi||_h(1+\frac{2e^{-2\pi lh}}{1-e^{-2\pi lh}})\sum_{j=1}^{|l|}e^{-2\pi k(h-h')}\leq\cst.\frac{||\ph||_h}{h},$$
$$||\psi||_{h'}\leq\frac{||\varphi||_h}{1-e^{-2\pi h}}(1+\frac{1}{1-e^{-2\pi(h-h')}})\leq\cst.\frac{||\ph||_h}{h(h-h')}. $$
\end{lem}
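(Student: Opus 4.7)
The plan is to pass to Fourier coefficients and exploit the fact that, in contrast to Lemma~\ref{l=0}, multiplication by $e^{2\pi i(lx+\la)}$ shifts Fourier modes by the nonzero integer $l$. Writing the equation coefficient-wise gives
\begin{equation*}
\hat\psi(m)-e^{2\pi i((m-l)\a+\la)}\hat\psi(m-l)=\hat\ph(m)-\hat P(m),\qquad m\in\Z,
\end{equation*}
so it decouples into $|l|$ independent one-step recursions along the orbits $\{m_0+jl\}_{j\in\Z}$ of the shift-by-$l$ map, one for each residue class $m_0\in\{-l+1,\ldots,0\}$ (when $l>0$, or $\{0,\ldots,-l-1\}$ when $l<0$), matching the prescribed support of $P_{\a,l,\la}(\ph)$.

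On each orbit I would factor out the accumulated phase $\Phi(j):=\sum_{i=1}^{j}\bigl((m_0+(i-1)l)\a+\la\bigr)$, which turns the recursion into an exact telescoping sum for $\hat\psi(m_0+jl)e^{-2\pi i\Phi(j)}$. Because $\ph$ is analytic on the strip of width $h$, both tail sums $\sum_{k\geq 1}\hat\ph(m_0\pm kl)\,e^{\mp 2\pi i\Phi(\pm k)}$ converge absolutely at geometric rate $e^{-2\pi|l|h}$, so the recursion can be solved by iterating forward from $-\infty$ and backward from $+\infty$ independently. The two procedures \emph{a priori} produce different values at the base point $m_0$ of each orbit; their difference is an explicit double series in $\hat\ph$, and demanding the two values to agree forces $\hat P(m_0)$ to equal precisely the double series displayed in the statement. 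Once $P$ is chosen this way the compatibility obstruction vanishes on every orbit, and the recursion admits a unique solution $\hat\psi$ decaying at both ends, given by the telescoping partial sums minus the compatibility corrections.

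For the norm estimates, plugging $|\hat\ph(k)|\leq\|\ph\|_h e^{-2\pi|k|h}$ into the explicit series for $P$ and summing the geometric series of ratio $e^{-2\pi|l|h}$ yields the stated bound $\|P_{\a,l,\la}(\ph)\|_{h'}\leq \cst\cdot\|\ph\|_h/h$, since there are at most $|l|$ nonzero Fourier modes and the factor $1/(1-e^{-2\pi|l|h})$ is controlled by $\cst/(|l|h)$. For $\psi$, weighting the Fourier coefficients by $e^{2\pi|m|h'}$ and summing along each orbit introduces one more geometric sum of ratio $e^{-2\pi|l|(h-h')}$, which supplies the extra factor $1/(h-h')$ and produces $\|\psi\|_{h'}\leq \cst\cdot\|\ph\|_h/(h(h-h'))$. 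The key structural point, and the reason no Diophantine condition on $\a$ is needed here, is that $l\neq 0$ makes the phase shift per step genuinely nontrivial, so no small divisors arise and the inverse of the linear operator is bounded uniformly in $\a$. The main source of technical friction will be purely bookkeeping: verifying that the signs, indices, and quadratic-in-$j$ phase exponents $(j+1)m-\frac{j(j-1)}{2}l$ and $jm+\frac{j(j+1)}{2}l$ in $P_{\a,l,\la}(\ph)(m)$ match the accumulated phases $\Phi(\pm k)$ exactly, so that the double series really cancels the forward--backward mismatch on each orbit; once this algebra is pinned down, the convergence and norm bounds are straightforward from the exponential decay of Fourier coefficients of analytic functions.
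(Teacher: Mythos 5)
Your plan — pass to Fourier coefficients, observe that $l\neq0$ makes multiplication by $e^{2\pi i(lx+\lambda)}$ a shift of Fourier modes, so the equation decouples into $|l|$ independent one-step recursions along the arithmetic progressions $m_0+l\Z$, telescope out the unit-modulus accumulated phase, and define $\hat P(m_0)$ to be the unique value cancelling the forward/backward mismatch so that the two-sided solution $\hat\psi$ decays — is correct and is precisely the mechanism behind Proposition~8.1 of \cite{K01}, which the paper cites in place of a proof. There are no small divisors because each step of the recursion has a unit-modulus multiplier, and the norm bounds then follow from $|\hat\varphi(k)|\leq\|\varphi\|_h e^{-2\pi|k|h}$ and geometric summation exactly as you indicate. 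One remark on the bookkeeping you defer: if you carry it out you will find that the phase exponents printed in the statement do not match the derivation (a direct check with $l=1$, $\lambda=0$, $\varphi=e^{2\pi ix}$ gives $\hat P(0)=1$, whereas the displayed formula yields $e^{-2\pi i\alpha}$); together with the typos ``$m=-1+1$'' and the stray ``$k$'', the printed expression for $P_{\alpha,l,\lambda}$ appears to be garbled, but this does not affect the estimates, which only use $|\hat P(m_0)|\leq\sum_{j\in\Z}|\hat\varphi(m_0+jl)|$.
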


For $\ph\in C_h^\omega(\T,\fso(3,\R))$, we decompose it with respect to the basis $\{w_1,w_2,w_3\}$ to be $\ph=\sum_{k=1}^3\ph_kw_k$. Let $\a\in\R\backslash\Q, D\in\fso(3,\R)\backslash\{0\}$, $C\in\fz(D)$. We define
$$\cP_0\ph=\hat\ph_3(0)w_3,\quad \cP_{\a,d,c}\ph=P_{\a,l_1,\la_1}\ph_1w_1+P_{\a,l_2,\la_2}\ph_2w_2.$$

Combining Lemma~\ref{l=0} and \ref{ln0} in each direction, and using preceding lemma, we deduce the following result.
\begin{cor}
\label{cor.conj}
Let $\e>0$ small, $\a\in \DC(\gamma,\sigma)$, $h>0$, $\ph\in C_h^\omega(\T,\fso(3,\R))$ with $||\ph||_h\leq\e$, $d\in\Z, c\in\R$. Then there exists $\psi, F\in C_{h'}^\omega(\T,\C)$, $h'<h$, such that
\begin{eqnarray*}&&\exp(\psi(x+\a))\exp(2\pi(dx+c)J_1)\exp(\ph(x))\exp(-\psi(x)) \\
&=&\exp(2\pi(dx+c)J_1+\cP_0\ph)\exp(\cP_{\a,d,c}\ph(x)+F(x)),
\end{eqnarray*}
where $F=O_2(\ph,\psi,\cP_{\a,d,c}\ph)$ and the following estimates hold:
$$||P_{\a,d,c}(\ph)||_h\leq\cst. \frac{||\ph||_h}{h},\quad
||\psi||_{h'}\leq\cst.\frac{||\ph||_h}{h(h-h')}.$$
\end{cor}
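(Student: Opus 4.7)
The strategy is to solve a block--diagonalized linearized cohomological equation in the eigenbasis of $\Ad(\exp(-2\pi(dx+c)J_1))$, assemble $\psi$ from the pieces, and then pass to the nonlinear identity by Baker--Campbell--Hausdorff (BCH).

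I would first decompose $\varphi(x)=\sum_{k=1}^3 \varphi_k(x)w_k$. For the unique index with $l_k=0$ (by convention $k=3$), Lemma~\ref{l=0} yields $\psi_3\in C_{h'}^\omega(\T,\C)$ solving
$$\psi_3(x+\a)-\psi_3(x)=-\varphi_3(x)+\hat\varphi_3(0),$$
whose residue is precisely $\cP_0\varphi=\hat\varphi_3(0)w_3$. For $k\in\{1,2\}$ with $l_k\neq 0$, Lemma~\ref{ln0} produces $\psi_k\in C_{h'}^\omega(\T,\C)$ solving
$$e^{2\pi i(l_kx+\la_k)}\psi_k(x+\a)-\psi_k(x)=-\varphi_k(x)+P_{\a,l_k,\la_k}(\varphi_k),$$
whose summed residue is $\cP_{\a,d,c}\varphi$. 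Setting $\psi:=\sum_k\psi_k w_k$, the two quantitative bounds in the corollary follow at once from the estimates in Lemmas~\ref{l=0} and~\ref{ln0}.

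Next I would assemble the nonlinear identity. Pushing $\psi(x+\a)$ across the large factor gives
$$\exp(\psi(x+\a))\exp(2\pi(dx+c)J_1)=\exp(2\pi(dx+c)J_1)\exp(\widetilde\psi(x)),$$
where $\widetilde\psi(x):=\sum_k e^{2\pi i(l_kx+\la_k)}\psi_k(x+\a)w_k$ is the adjoint-transported $\psi$. A single BCH application collapses the three remaining small exponentials:
$$\exp(\widetilde\psi)\exp(\varphi)\exp(-\psi)=\exp(\widetilde\psi+\varphi-\psi+F_1),\qquad F_1=O_2(\varphi,\psi).$$
By the very choice of the $\psi_k$, the linear part $\widetilde\psi+\varphi-\psi$ is exactly $\cP_0\varphi+\cP_{\a,d,c}\varphi$.

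Finally, since $\cP_0\varphi=\hat\varphi_3(0)w_3$ lies in the centralizer of $J_1$ by Lemma~\ref{cen}, the matrices $2\pi(dx+c)J_1$ and $\cP_0\varphi$ commute, so a last BCH factorization gives
$$\exp(2\pi(dx+c)J_1)\exp(\cP_0\varphi+\cP_{\a,d,c}\varphi+F_1)=\exp(2\pi(dx+c)J_1+\cP_0\varphi)\exp(\cP_{\a,d,c}\varphi+F),$$
with $F=F_1+O_2(\cP_0\varphi,\cP_{\a,d,c}\varphi+F_1)=O_2(\varphi,\psi,\cP_{\a,d,c}\varphi)$, which is the asserted identity. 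The only real obstacle is accounting: one must verify that every BCH remainder is genuinely at least bilinear in the small inputs (so that it fits the $O_2$ form), which is automatic here precisely because the linear terms have been eliminated by solving the cohomological equations exactly; the concomitant loss $h\to h'$ of analytic radius is absorbed into the $\mathrm{cst}/(h(h-h'))$ factor coming from Lemma~\ref{ln0}.
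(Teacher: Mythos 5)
Your proposal is correct and follows essentially the same route the paper sketches: decompose $\varphi$ in the adjoint eigenbasis, solve Lemma~\ref{l=0} on the $l=0$ component and Lemma~\ref{ln0} on the $l\neq0$ components, reassemble $\psi=\sum_k\psi_k w_k$, then use BCH together with the fact that $\cP_0\varphi\in\fz(J_1)$ to produce the nonlinear identity with a quadratic remainder. The paper gives exactly this outline in one sentence ("Combining Lemma~\ref{l=0} and~\ref{ln0} in each direction, and using preceding lemma…"), so your version merely fills in the bookkeeping.
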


\subsection{Proof of Theorem~\ref{thm.4.1}}
Let $D=2\pi dJ_1$ with $d\in\Z$,  $\{w_1,w_2,w_3\}$ be the basis given by adjoint action of $d$, namely
$$\exp(-Dx)w_k\exp(Dx)=e^{2\pi il_kx}w_k,$$
for some $l_k\in\Z$, $\ph\in C_h^\omega(\T,\fso(3,\R))$. We assume that $l_3=0$. Then we decompose $\ph$ with respect to $\{w_1,w_2,w_3\}$, namely $\ph=\sum_{k=1}^3\ph_kw_k$. We define
$\Pr\ph=\sum_{k=1}^2T_{l_k}\ph_kw_k,$ where
$$ T_{l}\psi=\begin{cases}
\sum_{m=0}^{-l-1}e^{2\pi imx}\hat\psi(m),\quad\text{if }l<0;\\
\sum_{m=-l+1}^{0}e^{2\pi imx}\hat\psi(m),\quad\text{if }l>0.
\end{cases}$$
We cite \cite[Lemma 6.1]{Ka15} and reformulate it as following. It is proved through the estimates on length.
\begin{lem}
\label{lem.length}
 There exists a positive constant $\cst$ such that for $c\in\R$, $\ph\in C_h^\omega(\T,\fso(3,\R))$ small enough, if $(\a,\exp(2\pi(dx+c)J_1)\exp\ph)$ is of dynamical degree $2\pi dJ_1$, then it satisfies
\be
||\Pr\ph||_{L^2}\leq c||(I-\Pr)\partial\ph||_{L^2}.
\ee
\end{lem}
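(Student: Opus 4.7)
The lemma is a \emph{length-type estimate}: it converts the dynamical-degree hypothesis into a quantitative Fourier-side relation between the resonant modes captured by $\Pr$ and the derivative of the non-resonant part. My plan is to pass through the defining $L^2$-limit
\[
\frac{1}{n}A_n^{-1}\partial A_n \xrightarrow{L^2} 2\pi dJ_1
\]
and match it, at each Fourier frequency, against the explicit expansion of $A(x) = \exp(2\pi(dx+c)J_1)\exp\ph(x)$ in the complex basis $\{w_1, w_2, w_3=J_1\}$ which diagonalizes $\Ad R$ for $R(x)=\exp(2\pi(dx+c)J_1)$. Expanding, one gets $A^{-1}\partial A = (2\pi d+\partial\ph_3)J_1 + (\partial\ph_1 - 2\pi id\ph_1)w_1 + (\partial\ph_2 + 2\pi id\ph_2)w_2 + O(\ph^2)$, and iterating via $A_n^{-1}\partial A_n = \sum_{j=0}^{n-1}\Ad(A_j^{-1})[(A^{-1}\partial A)(\cdot+j\a)]$ produces, to leading order in $\ph$, $\Ad(R_j^{-1})w_k$ times the phase $e^{\mp i\Theta_j(x)}$ with $\Theta_j(x)=2\pi(jdx+\tfrac{j(j-1)}{2}d\a+jc)$, together with a telescoping "rotational correction" $\Ad(A_j^{-1})J_1 - J_1$ in the $w_1, w_2$ directions.

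Fourier-expanding $\ph_k(x)=\sum_m \hat\ph_k(m)e^{2\pi imx}$ and reindexing $m\mapsto m+jl_k$, the $w_k$-component of the combined first-order expression collapses (the $j$-dependent terms in $\zeta_k(x+j\a)e^{\mp i\Theta_j(x)}$ and in the rotational correction combine algebraically) into
\[
\tfrac{2\pi i(m'+d(n-2))}{n}\sum_{j=0}^{n-1}\hat\ph_k(m'+jl_k)e^{i\Xi_j(m')},
\]
summed over the output Fourier variable $m'$, where the phase $\Xi_j(m')$ is precisely the one defining the obstruction operator $P_{\a,l_k,\la_k}$ from Lemma~\ref{ln0}. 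The hypothesis that the $L^2$-limit has no $w_k$-component forces, for each $m'$, the $j$-sum to tend to zero as $n\to\infty$, and passing to the limit (and absorbing $O(\|\ph\|^2)$ nonlinearity using smallness of $\ph$) yields the pointwise identity $P_{\a,l_k,\la_k}(\ph_k)(m')=O(\|\ph\|^2)$ for every $m'$ and both $k=1,2$.

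The final step is a Fourier-analytic rearrangement. For each resonant frequency $m$ in $\mathrm{supp}(T_{l_k})$ (so $|m|\leq|l_k|-1$), the relation $P_{\a,l_k,\la_k}(\ph_k)(m)=0$ expresses $\hat\ph_k(m)$ as an infinite sum over non-resonant Fourier modes $\hat\ph_k(m+jl_k)$ with $j\neq 0$. Writing $\hat\ph_k(m+jl_k)=\widehat{\partial\ph_k}(m+jl_k)/(2\pi i(m+jl_k))$ (all denominators are nonzero because $|m+jl_k|\geq|l_k|$ for $j\neq 0$) and applying Cauchy--Schwarz with the weights $(m+jl_k)^{-2}$, whose sum is a constant $C(l_k)$ controlled by $\sum j^{-2}$, I obtain $|\hat\ph_k(m)|^2 \leq C(l_k)\sum_{j\neq 0}|\widehat{\partial\ph_k}(m+jl_k)|^2$. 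Summing over the at most $|l_k|$ resonant frequencies for each $k\in\{1,2\}$ and taking square roots yields the desired inequality $\|\Pr\ph\|_{L^2}\leq c\|(I-\Pr)\partial\ph\|_{L^2}$.

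The main obstacle is the second step: rigorously extracting the vanishing of the obstruction $P_{\a,l_k,\la_k}(\ph_k)$ from the $L^2$-limit requires controlling the $O(\|\ph\|^2)$ corrections to $A_n^{-1}\partial A_n$ \emph{uniformly in $n$}, since naively the quadratic terms grow like $n^2$. The resolution is that after division by $n$ and $L^2$-averaging, phase-mixing along the irrational orbit forces these contributions to be $o(n)$, so they are absorbed by smallness of $\ph$; this is exactly the content of the length estimates of Krikorian \cite{K01} and Karaliolios \cite{Ka15} that the statement is quoting.
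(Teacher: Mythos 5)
The paper does not actually prove this lemma: it is quoted from \cite[Lemma 6.1]{Ka15}, with the remark that it ``is proved through the estimates on length'', i.e.\ through the expansion of the scalar functional $\tfrac1n\int_{\T}\|\partial A_n\|$, whose nonlinearity is what makes the quadratic (resonant) contributions of $\ph$ visible in the limit. Your proposal is therefore necessarily a different route, and as written it has a gap at its central step that I do not see how to repair.

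The gap is in the premise of your second step. The hypothesis ``dynamical degree equal to $2\pi dJ_1$'' means that the $\mathcal{A}_\alpha$-invariant $L^2$-limit $D(x)$ of $\tfrac1nA_n^{-1}\partial A_n$ lies a.e.\ in the \emph{conjugacy class} of $2\pi dJ_1$ (equivalently $\|D(x)\|=2\pi|d|$ a.e., plus quantization); it does \emph{not} say that the limit is the constant $2\pi dJ_1$, and in particular it does not say that the $w_1,w_2$-components of the limit vanish. $D$ is in general a genuinely non-constant invariant section, so your deduction that the reindexed $j$-sums tend to zero for each output frequency $m'$ does not follow. At first order the norm constraint only controls the component of the perturbation of $D$ along $J_1$, which is exactly why \cite{K01,Ka15} pass to the scalar quantity $\|\cdot\|$. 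A second, related problem is your handling of the error terms: the corrections $\Ad(A_j^{-1})-\Ad(R_j^{-1})$ applied to the order-one vector $2\pi dJ_1$ are of size $O(j\|\ph\|)$ and genuinely produce, after the $1/n$ normalization, resonant quadratic terms of size $O(n\|\ph\|^2)$ that do \emph{not} phase-mix away; if, as you claim in your last paragraph, every $\ph$-dependent contribution vanished in the limit, then the degree hypothesis would hold for every small $\ph$ and the lemma would be false (take $\ph=\Pr\ph\neq0$, for which the right-hand side of the claimed inequality is zero). It is precisely these surviving quadratic terms that carry the degree information. Two smaller issues: the series defining $P_{\a,l,\la}$ in Lemma~\ref{ln0} is an absolutely convergent \emph{unnormalized} series arising from the cohomological equation, not a Ces\`aro limit, so even identifying your limit with $P_{\a,l_k,\la_k}(\ph_k)(m')$ requires an argument; and with only $P_{\a,l_k,\la_k}(\ph_k)=O(\|\ph\|^2)$ your final Cauchy--Schwarz step leaves an error $O(\|\ph\|^2)$ not controlled by $\|(I-\Pr)\partial\ph\|_{L^2}$ (e.g.\ because of $\hat\ph_3(0)$). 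The last step itself --- from exact vanishing of the obstructions to the $L^2$ inequality via Cauchy--Schwarz along the progressions $m+jl_k$ --- is fine, but it rests on inputs that have not been established.
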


\begin{proof}[Proof of Theorem~\ref{thm.4.1}]
By our assumption, we can write $A(x)=\exp(2\pi(dx+c)J_1)\exp\ph(x)$ for some $\ph\in C_h^\omega(\T,\fso(3,\R))$ with $||\ph||_h\leq\e$. By Corollary~\ref{cor.conj}, there exists $c_1\in\R,\psi_1, F_1\in C_{h_1}^\omega(\T,\fso(3,\R))$, $h_1=\frac{h}{2}+\frac{h}{4}$, such that for $\ph_1=\cP_{\a,d,c}\ph,$
\begin{eqnarray*}&&\exp(\psi_1(x+\a))\exp(2\pi(dx+c)J_1)\exp(\ph(x))\exp(-\psi_1(x))\\
&=&\exp(2\pi(dx+c_1)J_1)\exp(\ph_1(x)+F_1(x)),
\end{eqnarray*}
with estimates
$$||\ph_1||_{h_1}\leq \cst.\frac{||\ph||_h}{h},\quad ||F_1||_{h_1}\leq\cst.\frac{||\ph||_h^2}{h^2(h-h_1)^2}.$$
Then Lemma~\ref{lem.length} gives that
\be
\label{eq.ph.F}
||\Pr(\ph_1+F_1)||_{L^2}\leq c||(I-\Pr)\partial(\ph_1+F_1)||_{L^2}.
\ee
As $\ph_1=\cP_{\a,d,c}\ph$, we have $\ph_1=\Pr\ph_1$. Then \eqref{eq.ph.F} gives that
$$||\ph_1+\Pr F_1||_{L^2}\leq c||(I-\Pr)\partial F_1||_{L^2}\leq c||F_1||_h\leq\cst.\frac{||\ph||_h^2}{h^2(h-h_1)^2}.$$
Hence we deduce that
$$||\ph_1+F_1||_h\leq ||\ph_1+\Pr F_1||_h+||(I-\Pr)F_1||_h\leq c||\ph_1+\Pr F_1||_{L^2}+\cst.||F_1||_h\leq\cst.\frac{||\ph||_h^2}{h^2(h-h_1)^2}.$$

\medskip

Let $\ph^1=\ph_1+F_1, \ph^0=\ph, h_0=h$. Inductively, we can find $h_j=\frac{h}{2}+\frac{h}{2^{j+1}}$, $c_j\in\R$, $\ph^j,\psi_j\in C_{h_n}^\omega(\T,\fso(3,\R))$, $j=1,\cdots,n$ such that
\begin{eqnarray*}&&\exp(\psi_j(x+\a))\exp(2\pi(dx+c_{j-1})J_1)\exp(\ph^{j-1}(x))\exp(-\psi_j(x))\\
&=&\exp(2\pi(dx+c_j)J_1)\exp(\ph^j(x)),
\end{eqnarray*}
with estimates
$$||\ph^j||_{h_j}\leq\cst.\frac{||\ph^{j-1}||_{h_{j-1}}^2}{h_{j-1}^2(h_{j-1}-h_j)^2},\quad j=1,\cdots,n.$$
Now we deal with $\ph^n$. By Corollary~\ref{cor.conj}, there exists $c_{n+1}\in\R$, $ \psi_{n+1}, F_{n+1}\in C_{h_{n+1}}^\omega(\T,\fso(3,\R))$, $h_{n+1}=\frac{h}{2}+\frac{h}{2^{n+1}}<h_n$, such that for $\ph_{n+1}=\cP_{\a,d,c_n}\ph^n,$
\begin{eqnarray}
\label{eq.Y.j}
 && \exp(\psi_{n+1}(x+\a))\exp(2\pi(dx+c_n)J_1)\exp(\ph^n(x))\exp(-\psi_{n+1}(x))\\
\nonumber &=&\exp(2\pi(dx+c_{n+1})J_1)\exp(\ph_{n+1}(x)+F_{n+1}(x)),
\end{eqnarray}
with estimates
$$||\ph_{n+1}||_{h_{n+1}}\leq \cst.\frac{||\ph_n||_{h_n}}{h_n},\quad ||F_{n+1}||_{h_{n+1}}\leq\cst.\frac{||\ph^n||_{h_n}^2}{h_n^2(h_n-h_{n+1})^2}.$$
Then Lemma~\ref{lem.length} gives that
\be
\label{eq.ph.F.n}
||\Pr(\ph_{n+1}+F_{n+1})||_{L^2}\leq C||(I-\Pr)\partial(\ph_{n+1}+F_{n+1})||_{L^2}.
\ee
As $\ph_{n+1}=\cP_{\a,d,c}\ph_n$, we have $\ph_{n+1}=\Pr\ph_{n+1}$. Then \eqref{eq.ph.F.n} gives that
$$||\ph_{n+1}+\Pr F_{n+1}||_{L^2}\leq C||(I-\Pr)\partial F_{n+1}||_{L^2}\leq C||F_{n+1}||_h\leq\cst.\frac{||\ph^n||_{h_n}^2}{h_n^2(h_n-h_{n+1})^2}.$$
Hence we deduce that
\begin{eqnarray*}||\ph_{n+1}+F_{n+1}||_h&\leq& ||\ph_{n+1}+\Pr F_{n+1}||_h+||(I-\Pr)F_{n+1}||_h \\
&\leq& C||\ph_{n+1}+\Pr F_{n+1}||_{L^2}+\cst.||F_{n+1}||_{h_{n+1}}\leq\cst.\frac{||\ph||_{h_n}^2}{h_n^2(h_n-h_{n+1})^2}.\end{eqnarray*}
Let $\ph^{n+1}=\ph_{n+1}+F_{n+1}$. Then we have
\be
\label{eq.e.n}
||\ph^{n+1}||_{h_{n+1}}\leq\cst.\frac{||\ph^n||_{h_n}^2}{h_n^2(h_n-h_{n+1})^2}=\cst.2^{2n}||\ph^n||^2_{h_n}.
\ee
Let $\e_n=||\ph^n||_{h_n}$. We have the following result for the convergence of the sequence.
\begin{lem}
Let the positive real sequence $\{\e_j\}$ satisfy
\be
\label{e.j}
\e_{j+1}\leq c4^j\e_j^2, j\in\N
\ee
 for some constant $c>0$. If $\e_0$ small enough, then there exists $\delta>0$ such that
$$\e_j\leq e^{-2^{j}\delta},\quad j\in\N.$$
\end{lem}
\begin{proof}
Let $v_j=\ln\e_j$, $\eta_j=\ln(c4^j)=c+j\ln4$. Then \eqref{e.j} gives
$$v_{j+1}\leq 2v_j+\eta_j.$$
Inductively we get
$$v_{j+1}\leq 2^{j+1}(\frac{\eta_j}{2^{j+1}}+\cdots+\frac{\eta_1}{2^2}+\frac{\eta_0}{2}+v_0).$$
Notice that $\sum_{j=0}^\infty\frac{\eta_j}{2^{j+1}}$ converges. Denote the limit by $\eta_\infty$. Choose $v_0<-\eta_\infty$, then
$$v_{j}<2^j(v_0+\eta_\infty)<0,\quad j\in\N.$$
Let $\delta=|v_0+\eta_\infty|>0$. Therefore, $\e_j=e^{v_j}\leq e^{-2^{j}\delta}$,which completes the proof.
\end{proof}

We choose $\e$ be small enough. Then we could inductively define $\ph^{n}\in\cB_{h_n}(\e)$ satisfying
$$||\ph^n||_{h_{n}}\leq e^{-2^{n}\delta},\quad n\in\Z.$$Therefore,
\be
\label{es.Y.j}
||\psi_{n+1}||_{h_{n+1}}\leq\cst.\frac{||\ph^n||_{h_n}}{h_n(h_n-h_{n+1})}
\leq\cst.2^ne^{-2^{n}\delta},\quad n\in\Z.
\ee

Let $\ti \psi_{n}\in C_{h_{n}}^\omega(\T,\fso(3,\R))  $ such that $\exp(\ti \psi_{n})=\exp(\psi_{1})\cdots\exp(\psi_{n})$. Then \eqref{es.Y.j} gives that $\ti\psi_{n}$ converges in $C_{\frac{h}{2}}^\omega(\T,\fso(3,\R))$. We denote the limit by $\ti \psi$. As $||c_{n+1}-c_n||=||\cP_0\ph^n||\leq||\ph^n||_{h_n}$, $c_n$ converges. We denote the limite by $\ti c$. By \eqref{eq.Y.j},
$$\exp(-\ti \psi(x+\a))\exp(2\pi(dx+c)J_1)\exp(\ph(x))\exp(-\ti \psi(x))=\exp(2\pi(dx+\ti c)J_1),$$
i.e., $(\a,A)$ is $C_{\frac{h}{2}}^\omega$-conjugate to $(\a,\exp(2\pi(dx+\ti c)J_1))$.
\end{proof}

\section{Proof of main results}
\label{sect.main}

\subsection{Proof of Theorem \ref{thm.ac.1}} In this subsection, we establish the equivalence of acceleration and degree.  We firstly study the accelerations along renormalization, which can be seen as generalization of Corollary 14 of \cite{A15}.
\begin{lem}
\label{w.n}
Let $\a\in\R\backslash\Q$, $G$ be a subgroup of $\GL(s,\R)$, $s\in\N^*$, $A\in C_h^\omega(\T,G)$, $(\a_n,\ti A^{(n)})$ be a representative of $n^{th}$ renormalization, $n\in\N$. Then
$$
L^i(\a_n,\ti A^{(n)}_\e)=\frac{1}{\beta_{n-1}}L^i(\a,A_{\beta_{n-1}\e}),\quad i=1,2,\cdots,s.
$$
Consequently, we have
$$w^i(\a,A)=w^i(\a_n,\ti A^{(n)})
,\quad i=1,2,\cdots,s.$$
\end{lem}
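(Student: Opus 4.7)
The plan is to derive an exact product formula for the iterates of $\tilde A^{(n)}_\e$ in terms of iterates of the base cocycle $A_{\beta_{n-1}\e}$, and then apply the subadditive ergodic theorem; the acceleration identity will follow by a change of variables. Unwinding the definition of $\cR^n(\Phi_{\a,A})$ from Section~\ref{sect.Z2} and using $\beta_n/\beta_{n-1}=\a_n$ together with $(-1)^n q_n\a\equiv\beta_n \pmod 1$, the pre-normalized commuting pair of $\cR^n(\Phi_{\a,A})$ reads
$$
\bm(1,\, A_{-(-1)^n q_{n-1}}(\beta_{n-1}\cdot))\\(\a_n,\, A_{(-1)^n q_n}(\beta_{n-1}\cdot))\em.
$$
Lemma~\ref{Normalization} supplies a normalizing map $B\in C^\omega(\R,\SO(s,\R))$; the second entry then becomes $\tilde A^{(n)}(x)=B(x+\a_n)A_{(-1)^n q_n}(\beta_{n-1}x)B(x)^{-1}$, and the forced triviality of the first entry imposes the quasi-periodicity
$$
B(x+1)=B(x)\,A_{-(-1)^n q_{n-1}}(\beta_{n-1}x)^{-1}.
$$
Because $\SO(s,\R)$ is compact, $B$ is bounded on $\R$, and its holomorphic extension is bounded on any compact subset of a thin enough strip.

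I would then iterate $\tilde A^{(n)}_\e$ and telescope the internal $B$-factors. The cocycle identity $A_a(y+b\a)A_b(y)=A_{a+b}(y)$ collapses the central product into $A_{m(-1)^n q_n}(\beta_{n-1}x+i\beta_{n-1}\e)$. Writing $x+m\a_n=\{x+m\a_n\}+k_m$ with $k_m\in\Z$ and applying the quasi-periodicity $k_m$ times to replace $B(x+m\a_n+i\e)$ by $B(\{x+m\a_n\}+i\e)\cdot A_{-(-1)^n q_{n-1}k_m}(\cdots)^{-1}$, a second use of the cocycle identity (the integer residue $(-1)^n(mp_n+k_m p_{n-1})$ being absorbed by $1$-periodicity of $A$) merges this auxiliary factor with the central block, giving
$$
(\tilde A^{(n)}_\e)_m(x)=B(\{x+m\a_n\}+i\e)\cdot A_{N_m}(\beta_{n-1}x+i\beta_{n-1}\e)\cdot B(x+i\e)^{-1},
$$
where $N_m:=(-1)^n(mq_n+k_m q_{n-1})$. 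Both boundary $B$'s sit in a compact part of the strip, so they contribute only $O(1)$ to $\log\|\Lambda^i(\tilde A^{(n)}_\e)_m\|$. Since $k_m/m\to\a_n$, one has $|N_m|/m\to q_n+\a_n q_{n-1}=\beta_{n-1}^{-1}$. For $n$ even, Kingman's subadditive ergodic theorem directly yields $\lim_{m\to\infty}\tfrac{1}{m}\log\|\Lambda^i(\tilde A^{(n)}_\e)_m(x)\|=\beta_{n-1}^{-1}L^i(\a,A_{\beta_{n-1}\e})$. For $n$ odd, $N_m<0$; the analytic extension of $A$ still satisfies $A^TA=I$, so $A_{\beta_{n-1}\e}(\cdot)\in\SO(s,\C)$, the singular values pair as $\sigma_i\sigma_{s-i+1}=1$, whence $\|\Lambda^iM^{-1}\|=\|\Lambda^{s-i}M\|$ and $L^{s-i}=L^i$; combined with $A_{-N}(y)=A_N(y-N\a)^{-1}$ this recovers the same limit.

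The acceleration identity follows by substitution:
$$
w^i(\a_n,\tilde A^{(n)})=\lim_{\e\to 0^+}\frac{1}{2\pi\beta_{n-1}\e}\bigl(L^i(\a,A_{\beta_{n-1}\e})-L^i(\a,A)\bigr)=w^i(\a,A)
$$
upon setting $\e'=\beta_{n-1}\e$. The main obstacle will be the bookkeeping in the product-formula step: carefully tracking the signs $(-1)^n$ through the continued-fraction identities, verifying that the integer residue $(-1)^n(mp_n+k_m p_{n-1})$ disappears thanks to the $1$-periodicity of $A$, and confirming that both boundary $B$-factors remain inside a compact piece of the strip. Once the exact product formula is in hand, passage to the Lyapunov limit and deduction of the acceleration identity are routine.
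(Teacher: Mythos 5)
Your proof is correct and follows essentially the same approach as the paper's: both rest on the conjugation identity $(\ti A^{(n)})_m(x)=N(x+m\a_n+l)\,A_{N_m}(\beta_{n-1}x)\,N(x)^{-1}$ obtained from the renormalized commuting pair, boundedness of the normalizer $N$ on a compact piece of a strip, integration of logarithms, and the asymptotic $|N_m|/m\to\beta_{n-1}^{-1}$. Two small remarks: the paper gets the needed bound on $N$ from analyticity on a compact set $[0,2]\times[-\e_0,\e_0]$ (valid for any $G\subseteq\GL(s,\R)$, whereas your appeal to compactness of $\SO(s,\R)$ is neither sufficient on its own nor applicable to general $G$); and your $\SO(s,\R)$-specific handling of odd $n$ via $L^{s-i}=L^i$ is exactly what the paper implicitly uses when it equates $\int\ln\|(\Lambda^iA)_{(-1)^nt}\|$ with $\int\ln\|(\Lambda^iA)_t\|$, a step that fails for general $G$ but matches the lemma's actual use for $\SO(s,\R)$ (and, in the proof of Theorem~\ref{thm.ac.1}, only along even $n$).
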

\begin{proof}
Let $N\in C^\omega_h(\R,\GL(s,\R))$ be a normalizing map of $n$th renormalization of $(\a,A)$, i.e.,
$$N(x+1)A_{(-1)^{n-1}q_{n-1}}(x_0+\beta_{n-1}x)N(x)^{-1}=I,$$
$$N(x+\a_n)A_{(-1)^nq_n}(x_0+\beta_{n-1}x)N(x)^{-1}=\ti A^{(n)}.$$
Then for  any $k,l\in\Z$.
\be
\label{eq.N}
A_{k(-1)^nq_n+l(-1)^{n-1}q_{n-1}}(x_0+\beta_{n-1}x)=N(x+k\a_n+l)^{-1}\ti A^{(n)}_kN(x),
\ee
 As there exists a canonical ring endomorphism from $\End(\R^s)$ to $\End(\Lambda^k\R^s)$,
we have for $P,Q\in\GL(s,\R)$,
$$\Lambda^i(PQ)=\Lambda^iP\cdot\Lambda^iQ.$$
Hence by \eqref{eq.N}, we have
$$(\Lambda^iA)_{k(-1)^nq_n+l(-1)^{n-1}q_{n-1}}(x_0+\beta_{n-1}x)=(\Lambda^iN(x+k\a_n+l))^{-1}(\Lambda^i\ti A^{(n)})_k\Lambda^iN(x).$$

Let $\e_0>0$ be such that $\ti A^{(n)}\in C^\omega_{\e_0}(\T,G)$ and $N$ admits an analytic extension to a open neighborhood of $\R$ containing $V=[0,2]\times[-\e_0,\e_0]$. Let $\Lambda_0=\sup_{z\in V}||\Lambda^iN(z)||^2.$ For $k\in\Z$, let $l=l(k)$ be the unique integer such that $0\leq k\a'+l<1$ and $t=t(k)=k(-1)^nq + l(-1)^{n-1}q_{n-1}$, then we have
$$\Lambda_0^{-1}\leq\frac{\|(\Lambda^iA)_t(y+\beta_{n-1}\e i)\|}{\|(\Lambda^i\ti A^{(n)})_k(x+\e i)\|}\leq \Lambda_0,$$
where $x,y\in\C/\Z$ are related by $y=x_0+\beta_{n-1}x$ and we assume that $|\Im(x+\e i)|<\e_0.$
It follows that
\begin{align*}
&|\int_{\T}\ln||(\Lambda^i\ti A^{(n)})_k(x+\e i)||dx-\int_{\T}\ln||(\Lambda^iA)_{(-1)^nt}(x+\beta_{n-1}\e i)||dx|\\
=&|\int_{\T}\ln||(\Lambda^i\ti A^{(n)})_k(x+\e i)||dx-\int_{\T}\ln||(\Lambda^iA)_{t}(x+\beta_{n-1}\e i)||dx|\leq\ln \Lambda_0.
\end{align*}
Notice that when $k$ is large, $t$ satisfies
$$(-1)^n\frac{t}{k}=q_n-\frac{l}{k}q_{n-1}=q_n+\a_nq_{n-1}+o(1)=\frac{1}{\beta_{n-1}}+o(1).$$
It follows that for large $k$,
$$\frac{1}{k}\int_{\T}\ln||(\Lambda^i\ti A^{(n)})_k(x+\e i)||dx=\frac{1+o(1)}{\beta_{n-1}}\frac{1}{(-1)^n}\int_{\T}\ln||(\Lambda^iA)_{(-1)^nt}(x+\beta_{n-1}\e i)||dx.$$
Taking the limit, we get
$$
L^i(\a_n,\ti A^{(n)}_\e)=\frac{1}{\beta_{n-1}}L^i(\a,A_{\beta_{n-1}\e}),\quad i=1,2,\cdots,s,
$$
which immediately implies $w^i(\a,A)=w^i(\a_n,\ti A^{(n)})$.
\end{proof}

With preceding lemma, now we can prove Theorem \ref{thm.ac.1} by renormalization.
\begin{proof}[Proof of Theorem \ref{thm.ac.1}]
Let $\ti A^{(n)}_t=\ti A^{(n)}(\cdot+it)$, $t\in\R$. We rewrite the convergence of renormalization from Theorem~\ref{sect4.0.2} to be
\be
\label{cv.t}
\ti A^{(n_k)}_t\xrightarrow{C^\omega}\exp(C+D(x+it))\quad \text{ on }\R,\quad t\in[-h,h],
\ee
for some subsequence of even integers $\{n_k\}$ and $C\in\fz(D)$. We assume that $L^i(\a,A)$ is affine on $(0,h_0)$ for some $h_0>0$. By Lemma \ref{w.n}, $L^i(\a_n,\ti A^{(n)})$ is also affine on $(0,h_0)$ and
$$\omega^i(\a_n,\ti A^{(n)})=\omega^i(\a,A)=:\omega^i,\quad i=1,\cdots,s,\quad n\in\N.$$
By compactness of $\SO(s,\R)$, $L_i(\a_n,\ti A^{(n)})=0$. We deduce that for $t\in(0,h_0)$,
$$L_i(\a_n,\ti A^{(n)}_t)=2\pi t\omega_i,\quad i=1,\cdots,s,\quad n\in\N.$$
By continuity of Lyapunov exponent \cite{AJS14}, \eqref{cv.t} gives that
\be
\label{w.d}
L_i(\a,\exp(C+D(x+it)))=\lim_{n\to\infty}L_i(\a_n,\ti A^{(n)}_t)=2\pi t\omega_i,\quad i=1,\cdots,s,\quad t\in(0,h_0).
\ee

\smallskip

Meanwhile, we recall the definition of Lyapunov exponents through singular value. As $C,D\in\fso(s,\R)$ commute with each other, $C$ will not contribute in $L_i(\a,\exp(C+D(x+it)))$. Moreover, compactness shows that there exist $Q\in\SU(s)$ and integers $d_1\geq\cdots\geq d_s$ such that $D=Q\cdot2\pi i\diag(d_1,\cdots,d_s)\cdot Q^{-1}$. We then compute explicitly that
$$L_i(\a,\exp(C+D(x+it)))=2\pi td_i,\quad i=1,\cdots,s.$$
Comparing with \eqref{w.d}, we deduce that
$$(\omega_1,\cdots,\omega_s)=(d_1,\cdots,d_s),$$
which completes the proof of Theorem \ref{thm.ac.1}.
\end{proof}

\subsection{Proof of Theorem \ref{thm.ac.2} (1) $\omega_1=0$}
\label{sect.5.2}
Now we start to prove Theorem \ref{thm.ac.2}. Since the case when $\omega_1$ zero or not has different dynamical nature, we separate the proof of Theorem \ref{thm.ac.2} into two subsections. We deal with $\omega_1=0$ here. To study almost reducibility along renormalization, we need the following lemma. 

\begin{lem}
\label{cor.a.r}
Let $h>0$, $\a\in\R\backslash\Q$,  $A\in C_h^\omega(\T,\SO(3,\R))$, $n\in\N$. If a representative of $n$th deep renormalization $(\a_n,\ti A^{(n)})$ satisfies $||\ti A^{(n)}-A_0||_h<\delta(h)$ for some constant $A_0\in\SO(3,\R)$, then $(\a,A)$ is $C^\omega$-almost reducible and accumulated by reducible cocycles.
\end{lem}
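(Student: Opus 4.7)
My strategy is to apply Theorem~\ref{thm.loc.dis} at the renormalization level and transfer the quantitative almost-reducibility back through the fixed finite renormalization depth $n$ to $(\a, A)$ itself.

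Since $\|\tilde A^{(n)} - A_0\|_h < \delta(h, L)$ for the constant $A_0 \in \SO(3,\R)$, I would apply Theorem~\ref{thm.loc.dis} to $(\a_n, \tilde A^{(n)})$ with a parameter $L \geq 3$. This yields strips $\tilde h_m > 0$, maps $\tilde B_m \in C^\omega_{\tilde h_m}(\T, \SO(3,\R))$, and constants $\tilde C_m \in \fso(3,\R)$ such that, setting $\tilde A^{(n)}_m := \tilde B_m(\cdot + \a_n) \tilde A^{(n)} \tilde B_m^{-1}$,
\[
\lim_{m \to \infty} \|\tilde A^{(n)}_m - \exp(\tilde C_m)\|_{\tilde h_m} \|\tilde B_m\|_{\tilde h_m}^L = 0.
\]
Next, I would transfer this back to $(\a, A)$ via Lemma~\ref{lem.ren.conj}. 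Since $n$ is fixed, the scaling $M_{\beta_{n-1}}$, the base change $Q_n$, and the normalizing map $B^\star$ of $\cR^n(\Phi_{\a, A})$ are constants of the problem; thus a conjugation $\tilde B_m$ at the renormalization level descends to a conjugation $\hat B_m \in C^\omega_{\hat h_m}(\T, \SO(3,\R))$ of $(\a, A)$ with $\|\hat B_m\|_{\hat h_m} \leq K \|\tilde B_m\|_{\tilde h_m}$ and $\hat h_m \geq c\, \tilde h_m$, where $K, c > 0$ depend only on $n$, $\a$, $h$, $B^\star$. Setting $\hat A_m := \hat B_m(\cdot + \a) A \hat B_m^{-1}$, its closeness to the constant $\exp(\hat C_m)$ is inherited from the corresponding estimate on $\tilde A^{(n)}_m$.

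Choosing $L$ large enough to absorb the multiplicative factor $K$ and still leave an exponent at least $2$ yields
\[
\lim_{m \to \infty} \|\hat A_m - \exp(\hat C_m)\|_{\hat h_m} \|\hat B_m\|_{\hat h_m}^2 = 0.
\]
Because $\exp(\hat C_m)$ takes values in the compact group $\SO(3,\R)$, one can extract a subsequence along which $\hat C_m \to C^\star$, so the constant cocycle $(\a, \exp C^\star)$ lies in the closure of the analytic conjugacy class of $(\a, A)$, giving $C^\omega$-almost-reducibility. For accumulation by reducible cocycles, I would set $A^\star_m := \hat B_m^{-1}(\cdot + \a) \exp(\hat C_m) \hat B_m$, which is reducible by construction, and estimate
\[
\|A - A^\star_m\|_{\hat h_m} \leq \|\hat B_m\|_{\hat h_m}^{2} \|\hat A_m - \exp(\hat C_m)\|_{\hat h_m} \longrightarrow 0,
\]
so $A^\star_m \to A$ in the analytic topology on a shrinking but positive strip.

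The main obstacle is the bookkeeping in the descent step: I must verify that a conjugation at the renormalization level descends to one of the original cocycle with quantitatively controlled analytic norms. This is a fixed-$n$, quantitative refinement of Lemma~\ref{lem.ren.conj}, obtained by composing explicitly the base change $Q_n$, the rescaling $M_{\beta_{n-1}}$, and the normalizing map $B^\star$ as defined in Section~\ref{sect.Z2}. The polynomial loss these operations introduce in the conjugacy norm is precisely what the strong $L$-power estimate of Theorem~\ref{thm.loc.dis} (as opposed to bare convergence $\|F_n\|_{h_n} \to 0$) was designed to absorb.
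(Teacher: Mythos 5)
You follow the paper's architecture: apply Theorem~\ref{thm.loc.dis} to $(\a_n,\ti A^{(n)})$, then pull the conjugations back through the fixed depth-$n$ renormalization. The paper does exactly this (with $L=4$), so your route is not different — but your treatment of the descent step, which you yourself flag as ``the main obstacle,'' leaves two genuine gaps.

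First, the na\"ive pull-back of $\ti B_m$ does not produce a $1$-periodic map, hence does not directly give a conjugation of $(\a,A)$. You write $\hat B_m$ as if composing $\ti B_m(\beta_{n-1}^{-1}\cdot)$ with the fixed normalizing map $B^\star(\beta_{n-1}^{-1}\cdot)$ suffices, but $\ti B_m(\beta_{n-1}^{-1}\cdot)$ is $\beta_{n-1}$-periodic, not $1$-periodic, and its product with $B^\star(\beta_{n-1}^{-1}\cdot)$ is not $1$-periodic either. The paper handles this by working with commuting pairs: after undoing $Q_n$, $M_{\beta_{n-1}}$, $B^\star$, and $\ti B_m$ on the \emph{comparison} (reducible) pair, the result $\Phi_3$ is not normalized, and a fresh, $j$-dependent normalizing map $\ti B$ is produced by the quantitative normalizing lemma (Lemma~\ref{norm.quant}); only $\ti B_j B_j(\beta_{n-1}^{-1}\cdot)N(\beta_{n-1}^{-1}\cdot)$ is a legitimate conjugation of $(\a,A)$. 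Since $\ti B$ is $O(\bar\e_j)$-close to the identity its norm is harmless, but its existence is precisely why the descent is not a one-line application of Lemma~\ref{lem.ren.conj}.

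Second, you misattribute what the exponent $L$ is for. You say $L$ is chosen to ``absorb the multiplicative factor $K$,'' but $K$ (coming from $q_n$, $\beta_{n-1}^{-1}$, $\|B^\star\|$) is a fixed constant and needs no power of $\|\ti B_m\|$ to absorb it. What does cost a nonconstant factor is that undoing the $j$-dependent conjugation $\ti B_m$ on the reducible pair degrades the closeness estimate by $\|\ti B_m\|^2$ (this is the bound analogous to \eqref{dist.conj}), and the final conjugacy estimate $\|\breve B_j^{-1}(\cdot+\a)A\breve B_j-C_j\|$ costs another $\|\ti B_m\|^2$. That is why the paper uses $L=4$: $\|\ti B_m\|^4\,\|\ti A_j-C_j\|\to 0$ is exactly what is needed. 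Your proposed $L\geq 3$ would not suffice if taken literally, and ``choose $L$ large enough'' without this accounting leaves the quantitative heart of the lemma unjustified. Both gaps are fillable — and are filled in the paper via the $d_{h,I}$ distance on commuting pairs, \eqref{dist.conj}, and Lemma~\ref{norm.quant} — but as written the plan does not close them.
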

\begin{proof}
We apply Theorem ~\ref{thm.loc.dis} for $h>0$ and $L=4$, denoting $\delta(h,4)$ briefly by $\delta(h)$. when $||\ti A^{(n)}-A_0||_h<\delta(h)$, Theorem ~\ref{thm.loc.dis} gives that $(\a_n,\ti A^{(n)})$ is $C^\omega$-almost reducible. More precisely, for $j\in\N$, there exist $h_j>0$, $B_j\in C_{h_j}^\omega(\T,\SO(3,\R))$, $C_j\in\SO(3,\R)$,
 such that for $\ti A_j= B_j(\cdot+\a_n)\ti A^{(n)}(\cdot)B_j(\cdot)^{-1}$, $\e_j=\|B_j\|_{h_j}^4\cdot||\ti A_j-C_j||_{h_j}$, we have
\be
\label{bj}
\e_j\rightarrow 0,\qquad \hbox{as $j\rightarrow \infty$.}
\ee
We will work with large $j$ so that $\e_j$ small.

Let $(\a_n,\ti A_j)^m=(m\a_n,\ti A_{j,m}), m\in\N$. As
\begin{align*}
||\ti A_{j,m+1}(x)-C_j^{m+1}||_{h_j}&=||\ti A_j(x+m\a)(\ti A_{j,m}(x)-C_j^m)+(\ti A_j(x+m\a)-C_j)C_j^m||_{h_j}\\
&\leq(1+\e_j)||\ti A_{j,m}(x)-C_j^m||_{h_j}+\e_j,
\end{align*}
inductively we deduce that $||\ti A_{j,m}(x)-C_j^m||\leq(1+\e_j)^m-1$. We choose $j$ large enough so that $\e_j<<\frac{1}{q_n}$. Then we have
\be
\label{dist.q}
||\ti A_{j,q_n}-C_j^{q_n}||_h\leq q_n\e_j.
\ee

Now we define the distance for commuting pairs. We start with a notation of maximal norm. Let $I\subset\R$ be an interval, $h>0$, $f:I\times i[-h,h]\to\SO(3,\R)$ be a continuous map. We define
$$||f||_{h,I}=\sup_{x\in I\times i[-h,h]}||f(x)||.$$
Let $\Lambda^\omega_{h,I}$ be the set of all $\Phi\in\Lambda^\omega$ with $A^\Phi_{1,0}, A^\Phi_{0,1}$ analytically defined on a set containing $I\times i[-h,h]$. For $\Phi\in\Lambda^\omega_{h,I}$, we denote by $||\Phi||_{h,I}$ the quantities
$$||\Phi||_{h,I}:=\sup(||A^\Phi_{1,0}||_{h,I},||A^\Phi_{0,1}||_{h,I}),$$
 and then define a distance on $\Lambda^\omega_{h,I}$ as
$$d_{h,I}(\Phi_1,\Phi_2):=\sup(||A_{1,0}^{\Phi_1}-A_{1,0}^{\Phi_2}||_{h,I}, ||A_{0,1}^{\Phi_1}-A_{0,1}^{\Phi_2}||_{h,I}),\quad \Phi_1,\Phi_2\in\Lambda^\omega_{h,I}.$$
If $\Phi,\Phi'\in\Lambda_{h,I}^\omega$ are $C^\omega$-conjugated via $B\in C^{\omega}(\R,\SO(s,\R))$ analytically defined on a set containing $I\times i(-h,h)$, then
$$||\Phi'||_{h,I}\leq||B^{-1}||_{h,I}\max(||B||_{h,I+\gamma^\Phi_{1,0}},||B||_{I+\gamma^\Phi_{0,1}})||\Phi||_{h,I}.$$
Let $\Phi_1,\Phi_2\in\Lambda_{h,I}^\omega$, $B\in C^{\omega}(\R,\SO(3,\R))$ analytically defined on a set containing $I\times i(-h,h)$, $\Phi_i'=\Conj_B(\Phi_i)$, $i=1,2$, then
\be
\label{dist.conj}
d_{h,I}(\Phi_1',\Phi_2')\leq||B^{-1}||_{h,I}\max(||B||_{h,I+\gamma^\Phi_{1,0}},||B||_{h,I+\gamma^\Phi_{0,1}})d_{h,I}(\Phi_1,\Phi_2).
\ee

Let $\Phi_1=\Conj_{B_j}\bm1,I\\\a_n,\ti A^{(n)}\em$, $\Phi_2=\bm1,I\\\a_n,C_j\em$. Then $d_{h_j,I}^\omega(\Phi_1,\Phi_2)\leq\e_j$ for any interval $I\subset\R$. It is easy to see that
\be
d_{h_j,\beta_{n-1}I}^\omega(M_{\beta_{n-1}^{-1}}(\Phi_1),M_{\beta_{n-1}^{-1}}(\Phi_2))=\beta_{n-1}^{-1}d_{h_j,I}^\omega(\Phi_1,\Phi_2).
\ee
By \eqref{dist.q},
\be
\label{dist.1}
d_{h_j,\beta_{n-1}I}
^\omega(N_{Q_n^{-1}}\circ M_{\beta_{n-1}^{-1}}(\Phi_1),N_{Q_n^{-1}}\circ M_{\beta_{n-1}^{-1}}(\Phi_2))\leq q_n\beta_{n-1}^{-1}\e_j.
\ee

Assume that $N\in C^\omega(\R,\SO(3,\R))$ is the normalizing map such that
$$\bm1,I\\\a_n,\ti A^{(n)}\em=\Conj_N\circ M_{\beta_{n-1}}\circ N_{Q_n}\bm1,I\\\a,A\em.$$
Let $\Phi_3:=\Conj_{N^{-1}(\beta_{n-1}^{-1}\cdot)}\circ\Conj_{B_j^{-1}(\beta_{n-1}^{-1}\cdot)}\circ N_{Q_n^{-1}}\circ M_{\beta_{n-1}^{-1}}(\Phi_2)$, $I=\beta_{n-1}^{-1}[-\frac{1}{2},1]$. Then by \eqref{dist.conj} and \eqref{dist.1},
\begin{align}
\label{dist.2}
d_{h_j,[-\frac{1}{2},1]}
^\omega(\bm1,I\\\a,A\em,\Phi_3)
\leq ||N|_{[0,2\beta_{n-1}^{-1}]}||_{h_j}^2q_n\beta_{n-1}^{-1}||B_j||_{h_j}^2\e_j=:\bar\e_j,
\end{align}
By (\ref{bj}), 
$\bar\e_j\to0$ as $j\to\infty$. We work with $j$ large so that $\bar\e_j$ small enough.

\medskip
To continue analysis, we recall the quantitative normalizing lemma, based on normalizing lemma in \cite{AK06}, with a detailed proof in \cite{P23}.
\begin{lem}
\label{norm.quant}
For any $\Phi\in\Lambda_{0}^\omega$, $\Phi$ is $C^\omega$-conjugated to a normalized action. Moreover, there exist an $\e>0$ and a constant $\cst$ such that for any $\Phi\in\Lambda_{0}^\omega$ satisfying
$||A_{1,0}^\Phi-I||_{h,{[-\frac{1}{2},1]}}\leq\e$, for some $h>0$, we can construct a normalizing map $B\in C^\omega(\R,\SO(3,\R))$ such that 
$$B(\cdot+1)A_{1,0}^\Phi(\cdot)B(\cdot)^{-1}=I,\quad ||B-I||_{h,[0,2]}\leq\cst.||A_{1,0}^\Phi-I||_{h,[-\frac{1}{2},1]}.$$
\end{lem}

\medskip
Notice that $\Phi_3$ is not necessarily normal. Since \eqref{dist.2} gives that
$$||A^{\Phi_3}_{1,0}-I||_{h_j}\leq\bar\e_j,$$
we can construct from Lemma~\ref{norm.quant} a conjugacy $\ti B\in C^\omega_{h_j}(\R,\SO(3,\R))$ such that $$\ti B(\cdot+1)A^{\Phi_3}_{1,0}\ti B(\cdot)^{-1}=I$$ and satisfying
$$||\ti B-I||_{h_j,[0,2]}\leq \cst.\bar \e_j.$$
 As we can express $\ti B^{-1}(x)$ by entries of $\ti B(x)$, we get
$$||\ti B^{-1}-I||_{h_j,[0,2]}\leq \cst.\bar \e_j.$$
Then
\begin{align*}
&||\ti B(x+\a)A^{\Phi_3}_{0,1}(x)\ti B(x)^{-1}-A^{\Phi_3}_{0,1}||_{h_j,[0,1]}\\=&||\ti B(x+\a)A^{\Phi_3}_{0,1}(\ti B(x)^{-1}-I)+(\ti B(x+\a)-I)A_{0,1}^{\Phi_3}(x)||_{h_j,[0,1]}\leq \cst.\bar\e_j,
\end{align*}
as $\bar\e_j$ is small enough. Together with \eqref{dist.2}, we deduce that
\be
\label{dist.3}
d_{h_j,[0,1]}(\bm1,I\\\a,A\em,\Conj_{\ti B}\Phi_3)
\leq \cst.\bar\e_j.
\ee
This means that there exist $\breve{A}_j\in C^\omega_{h_j}(\T,\SO(s,\R)),  \breve{B}_j\in C^\omega_{h_j}(\T,\SO(s,\R))$, where $\breve{B}_j(\cdot)=\ti B_j(\cdot)B_j(\beta_{n-1}^{-1}\cdot)N(\beta_{n-1}^{-1}\cdot)$ satisfies
\be
\label{B.j}
||\breve{B}_j||_{h_j}\leq \cst.||B_j||_{h_j},
\ee
such that
\be
\label{A.A.j}
||A-\breve{A}_j||_{h_j}\leq \cst.\bar\e_j
\ee
and
\be
\label{bar.bar.A}
(\a,\breve{A}_j)=(0, \breve{B}_j)\circ(\a,C_j)\circ(0, \breve{B}_j)^{-1}.
\ee
Thus $A$ is approximated by reducible cocycles.
\medskip
By \eqref{B.j}, \eqref{A.A.j}, \eqref{bar.bar.A} and the convexity inequality, we have
$$||\breve{ B}_j(x+\a)^{-1}A(x)\breve{ B}_j(x)-C_j||_{h_j}\leq\cst.||B_j||_{h_j}^2\bar\e_j.$$
By  (\ref{bj}),  
$||B_j||_{h_j}^2\bar\e_j=||B_j||_{h_j}^4\e_j\to0$ as $j\to\infty$. Hence $(\a,A)$ is $C^\omega$-almost reducible.
\end{proof}

\begin{proof}[Proof of Theorem \ref{thm.ac.2} (1)] We prove the equivalence of $\omega_1=0$ and almost reducibility for analytic $\SO(3,\R)$-cocycles. 

If $(\a,A)$ is  $C^\omega$-almost-reducible, i.e., there exists $B_n\in C^\omega(\T,\SO(3,\R))$, $A_0\in\SO(3,\R)$ such that
$$B_n(x+\a)^{-1}A(x)B_n(x)\to A_0.$$
As acceleration is invariant by conjugation and upper semi-continuous in $C^\omega$-space, we deduce that
$\omega_1=0.$

Conversely, if $\omega_1=0$, by Theorem \ref{thm.ac.1}, the dynamical degree of $(\a,A)$  is zero. Then Theorem \ref{sect4.0.2} provides the convergence of renormalization to constant.
In other word, for $\delta(h)>0$, there exists $n_0\in\N$, such that $\ti A^{(n)}$ is $\delta(h)$ close to constant. 
Lemma~\ref{cor.a.r} therefore shows that $(\a,A)$ is $C^\omega$-almost reducible.

\end{proof}

\subsection{Proof of Theorem \ref{thm.ac.2} (2) $\omega_1\neq0$}

To prove the main theorem, we will find a conjugation representative when $\omega_1\neq0$. As conjugation is preserved along renormalization, we firstly compute how the normal form change along renormalization.

\begin{lem}
\label{ren.nor.form}
Let $\a\in\R\backslash\Q$, $D\in\fso(3,\R)$ satisfying $\exp D=I$, $C\in\fz(D)$. Then the cocycle $(\a,\exp(Dx+C))$ has representative of renormalization $(\{\frac{1}{\a}\},\exp(-Dx-D(\frac{1}{2\a}+\frac{1}{2})-\frac{C}{\a}))$.
\end{lem}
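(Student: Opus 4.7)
The plan is to unfold the definition directly: compute $\cR(\Phi_{\a,A}) = M_{\beta_0}\circ Q_1(\Phi_{\a,A}) = M_\a\circ U(\a)(\Phi_{\a,A})$ explicitly and then normalize, exploiting the fact that every matrix arising in the construction lies in the one-dimensional abelian subalgebra $\fz(D)$ (by Lemma \ref{cen}, since $C\in\fz(D)$), so the entire calculation reduces to additive manipulations in the exponent.

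The first step is to establish the closed-form iterate
\begin{equation*}
A_n(x) = \exp\!\bigl(nDx + \tfrac{n(n-1)}{2}\a D + nC\bigr), \qquad n\in\Z.
\end{equation*}
For $n\geq 1$ this follows by summing the exponents $Dx+k\a D+C$ over $k=0,\ldots,n-1$ (commutative since $C\in\fz(D)$); for $n\leq -1$ the identity extends by the group inverse in $\Omega^\omega$. Applying the base change $U(\a)=\bm 0 & 1 \\ 1 & -a_1 \em$, which sends $\binom{1}{0}\mapsto\binom{0}{1}$ and $\binom{0}{1}\mapsto\binom{1}{-a_1}$, and then rescaling by $M_\a$ yields the commuting pair (lying in $\Lambda_0^\omega$ but not yet normalized)
\begin{equation*}
\bm (1,\exp(D\a x + C)) \\ (\a_1,\exp(-a_1 D\a x + \tfrac{a_1(a_1+1)}{2}\a D - a_1 C)) \em.
\end{equation*}

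To normalize, I look for $N(x)=\exp h(x)$ with $h$ a polynomial valued in $\fz(D)$, satisfying $N(x+1)\exp(D\a x+C)N(x)^{-1}=I$, equivalently $h(x+1)-h(x) = -D\a x - C$. A quadratic ansatz gives $h(x) = -\tfrac{D\a}{2}x^2 + (\tfrac{D\a}{2}-C)x$. Because $h$, $D$, $C$ all lie in the abelian $\fz(D)$, conjugating the second generator by this $N$ is a purely additive computation. Collecting the coefficient of $x$ collapses via $a_1+\a_1 = 1/\a$ to exactly $-D$, and collecting the constant term --- again using $a_1+\a_1 = 1/\a$ in the simplification --- gives
\begin{equation*}
\tfrac{\a D}{2}\bigl(a_1(a_1+1) - \a_1^2 + \a_1\bigr) - (a_1+\a_1)C = D\bigl(\tfrac{1}{2\a}+\tfrac{1}{2}-\a_1\bigr) - \tfrac{C}{\a}.
\end{equation*}

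The final step is to reconcile this with the claimed constant $-D(\tfrac{1}{2\a}+\tfrac{1}{2}) - \tfrac{C}{\a}$. The two differ by $-D(a_1+1)$, an integer multiple of $D$; since $\exp D = I$ by hypothesis and all the constants commute within $\fz(D)$, we have $\exp(-(a_1+1)D)=I$ and therefore the two expressions for the cocycle coincide as elements of $C^\omega(\R,\SO(3,\R))$. The main (mild) obstacle is careful scalar bookkeeping through the quadratic normalization; once one commits to $h$ being a polynomial valued in the one-dimensional space $\fz(D)$, every manipulation reduces to elementary algebra in $\R$, and the final collapse using $\exp D=I$ is what makes the stated form come out cleanly.
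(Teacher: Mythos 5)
Your proof is correct and follows essentially the same route as the paper: compute the one-step renormalized commuting pair explicitly, normalize via an exponential of a quadratic polynomial $h$ valued in the abelian centralizer $\fz(D)$ (your $N=\exp h$ is exactly the paper's $J(x)=\exp(-\tfrac{\a}{2}D(x^2-x)-Cx)$), and collapse all the algebra additively. You also caught a genuine arithmetic slip: the raw computation yields the constant $\tfrac{D}{2}(a_1-\a_1+1)-\tfrac{C}{\a}$, which differs from the lemma's stated $-D(\tfrac{1}{2\a}+\tfrac{1}{2})-\tfrac{C}{\a}$ by $D(a_1+1)$ (the paper's own display \eqref{eq.ti.D.def} has the same discrepancy); your observation that $\exp D=I$ and $a_1\in\Z$ make the two expressions coincide as $\SO(3,\R)$-valued maps is exactly the right resolution.
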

\begin{proof}
Let $a=a_1=\frac{1}{\a}-\{\frac{1}{\a}\}, \Upsilon(x)=\exp(Dx+C).$ We start with the commuting pair
 \be
\label{com.pa}
M_\a\circ\bm0&1\\1&-a\em\bm(1,I)\\(\a,H)\em=\bm(1, \Upsilon(\alpha x))\\(\frac{1}{\a},I)\circ(1,\Upsilon(\a x))^{-a}\em.
\ee
We define $J(x)=\exp(-\frac{\a}{2}D(x^2-x)-Cx)$. Then
\be
\label{eq.conj.J.D}
\Upsilon(\alpha x)=J(x+1)^{-1}J(x),
\quad
\Conj_{J} (1,\Upsilon(\alpha\cdot))=(1,I).
\ee
 Hence $J$ is a normalizing map of the one-step renormalization \eqref{com.pa}.

Now, inversing \eqref{eq.conj.J.D} and computing its $a$ power, we get
$$
\Conj_{J}(1,\Upsilon(\a\cdot))^{-a}=(1,I)^{-a}=(-a,I).
$$
Then we compute 
\begin{align}
\label{eq.ti.D}
\Conj_{J}((\frac{1}{\a},I)\circ(1,\Upsilon(\a \cdot))^{-a})
=&\Conj_{J}(\frac{1}{\a},I)\circ\Conj_{J}(1,\Upsilon(\a \cdot))^{-a}\nonumber\\
=&(0,J)\circ (\frac{1}{\a},I)\circ(0,J)^{-1}\circ(-a,I)\nonumber\\
=&(\{\frac{1}{\a}\},J(x+\{\frac{1}{\a}\})J(x-a)^{-1}).
\end{align}
Let
\begin{align}
\label{eq.ti.D.def}
\ti \Upsilon(x)=&J(x+\{\frac{1}{\a}\})J(x-a)^{-1}=\exp(-D(x+\frac{1}{2}+\frac{1}{2\a})-\frac{C}{\a}).
\end{align}
We then get a representative $(\{\frac{1}{\a}\},\ti \Upsilon)$ of one-step renormalization of $(\a,\Upsilon)$,
$$\bm(1,I)\\(\{\frac{1}{\a}\},\ti \Upsilon(x))\em=\Conj_{J(\a,d,c)}\circ M_\a\circ\bm0&-1\\-1&a\em\bm(1,I)\\(\a,\Upsilon)\em.$$
\end{proof}

\begin{proof}[Proof of Theorem \ref{thm.ac.2} (2)] 
 When $\omega_1\neq0$, by Theorem~\ref{thm.ac.1}, we only need to prove that if dynamical degree $D$ is nonzero, then the cocycle $(\a,A)$ is $C^\omega$-conjugate to $(\a,\exp(Dx+C))$ for some $C\in\fz(D)$.

By Theorem \ref{sect4.0.2}, we have convergence of renormalization, namely for a sequence of $C_n\in\fz(D)$, $\ph_n\in C_h^\omega(\T,\fso(3,\R))$,
$$\ti A^{(n)}(x)=\exp(\ph_n(x))\exp((-1)^nDx+C_n),$$
\be
\label{cvC2}
\lim_{n\to\infty}||\ph_n||_h=0.
\ee
As $\a\in \RDC$, there exist $\kappa>0,\tau>0$ such that $\a_n\in \DC(\kappa,\tau)$ for infinitely many $n\in\N$. For such $n$ large enough, by \eqref{cvC2}, Theorem~\ref{thm.4.1} gives that $(\a_n,\ti A^{(n)})$ is $C^\omega$-conjugate to $(\a_n,\exp((-1)^nDx+C'))$ for some $C'\in\fz(D)$. 

By Lemma~\ref{ren.nor.form}, the form $(\a,\exp(\pm Dx+\cdot))$ with $\cdot\in\fz(D)$ is preserved by renormalization. Hence there exists $C\in\fz(D)$ such that $(\a,\exp(Dx+C))$ has a representative of $n$th renormalization as $(\a_n,\exp((-1)^nDx+C'))$. 

In other word, $(\a,A)$ has a representative of $n$th renormalization $(\a_n,\ti A^{(n)})$ conjugate to $(\a_n,\exp((-1)^nDx+C'))$, a representative of $n$th renormalization of $(\a,\exp(Dx+C))$. Then Lemma~\ref{lem.ren.conj} gives that $(\a,A)$ is $C^\omega$-conjugate to $(\a,\exp(Dx+C))$. Hence we complete the proof of Theorem \ref{thm.ac.2}.
\end{proof}

\appendix
\section{Floquet Theory}
We introduce the following quantitative  Floquet Theory. In \cite{HY12}, we have introduced a quantitative  Floquet Theory for
quasi-periodic linear system on $\SL(2,\R)$. Here we consider the case of $\SO(3,\R)$, but the conclusion and the proof also
hold for general compact matrix group.
\begin{lem}
\label{F-Lem}
Let $\alpha\in\R\backslash \Q$,  suppose that
$F=\sum_{l\in\Z}\hat F(lq,-lp)e^{2\pi il(q\theta_1-p\theta_2)}\in \cB_h$.
Then there exists $B\in C^\omega_h(\T^2,\SO(3,\R))$ which
 conjugates the system
$$\begin{cases}
\dot{x}=F(\theta)x\\
\dot{\theta}=\omega=(\alpha,1)
\end{cases}$$ to the constant system $$\begin{cases}
\dot{x}=Cx\\
\dot{\theta}=\omega=(\alpha,1)
\end{cases}$$  with estimates
\be\label{est.-F-lem}
||B||_h\leq \exp \left\{\frac{(|q|+|p|) h}{|\tau|}||F||^\#_h\right\},
\qquad \tau:= q\alpha-p.
\ee
\end{lem}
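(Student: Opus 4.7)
The proof rests on the observation that the Fourier-support hypothesis forces $F$ to factor as $F(\theta) = \tilde F(q\theta_1 - p\theta_2)$, where $\tilde F(u) := \sum_{l\in\Z} \hat F(l(q,-p)) e^{2\pi i l u}$ is a $1$-periodic analytic $\fso(3,\R)$-valued function of a single variable. I would therefore seek $B$ of the same form, $B(\theta) = \tilde B(u)$ with $u := q\theta_1 - p\theta_2$. Since $\partial_\omega u = q\alpha - p = \tau$, the conjugation equation $\partial_\omega B = BF - CB$ collapses to the one-dimensional periodic linear ODE
$$\tau\, \tilde B'(u) = \tilde B(u) \tilde F(u) - C\, \tilde B(u).$$

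This ODE is solved by classical Floquet theory. Let $R(u)$ denote the fundamental solution of the untwisted equation $\tau R'(u) = R(u)\tilde F(u)$ with $R(0) = I$; it is $\SO(3,\R)$-valued on $\R$ and extends holomorphically to the strip on which $\tilde F$ is analytic. The $1$-periodicity of $\tilde F$ forces the left-monodromy relation $R(u+1) = N R(u)$ with $N := R(1) \in \SO(3,\R)$. Because $\exp: \fso(3,\R) \to \SO(3,\R)$ is surjective, I can pick $L \in \fso(3,\R)$ with $N = e^{L}$ and set $C := \tau L$ and $\tilde B(u) := e^{-Lu} R(u)$. A direct computation confirms that $\tilde B$ satisfies both the target ODE and the identity $\tilde B(u+1) = \tilde B(u)$ (using that $e^{-L}N = I$). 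The integrality of $p, q$ together with the $1$-periodicity of $\tilde B$ then ensures that $B(\theta) := \tilde B(q\theta_1 - p\theta_2)$ descends to a well-defined analytic map $\T^2 \to \SO(3,\R)$.

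For the quantitative estimate, observe that if $\theta \in \T^2_h$ then $|\Im u| \leq (|q|+|p|) h =: H$, and matching Fourier expansions gives $\|\tilde F\|_H^\# = \|F\|_h^\#$. I bound $\|R(u)\|$ on this strip by a Gronwall argument applied to the ODE: starting from the real axis, where $R(x) \in \SO(3,\R)$ has norm one, and integrating vertically to $u = x+iy$ yields
$$\|R(x+iy)\| \leq \exp\!\left(|y|\,\|\tilde F\|_H^\#/|\tau|\right).$$
Combining with $\|e^{-Lu}\| \leq e^{|y|\|L\|}$ gives the announced bound once $\|L\|$ is absorbed into the factor $\|\tilde F\|_H^\#/|\tau|$.

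The only delicate point is this quantitative control of the Floquet exponent $L$. A crude choice of logarithm only guarantees $\|L\|\leq\pi$, which would contribute a spurious $e^{\pi H}$ factor. To match the clean form of the stated estimate, one exploits the explicit bound $\|N - I\| \leq \|\tilde F\|_H^\#/|\tau|$, obtained by integrating the ODE over $[0,1]$ on the real axis where $\|R\|=1$, to conclude that $\|L\|$ itself is controlled by $\|\tilde F\|_H^\#/|\tau|$ in the small-data regime, while in the large-data regime the bound $\|L\|\leq\pi$ is absorbed by the dominant term $\|\tilde F\|_H^\#/|\tau|\geq \pi$. The Floquet reduction itself is entirely classical; this bookkeeping with the exponent $L$ is the sole nontrivial step.
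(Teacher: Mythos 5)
Your proof is correct and is essentially the paper's own argument: both reduce to one-variable Floquet theory by observing that $F$ depends only on $u=q\theta_1-p\theta_2$, define $C$ via the logarithm of the monodromy, set $B = (\text{fundamental solution})\cdot e^{-Cu/\tau}$, and estimate by Gronwall off the real axis. The paper works with the physical time variable $t$ (period $1/|\tau|$) rather than your scaled variable $u$ (period $1$), and it states the bound $\|C\|\le \|G\|_{\tilde h}$ without the bookkeeping you spell out for the Floquet exponent, but these are cosmetic differences, not a different route.
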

\begin{proof}
Let $x=q\theta_1-p\theta_2, \ti h=(|q|+|p|)h$, $G(\phi)=\sum_{l\in\Z}\hat F(lq,-lp)e^{2\pi il \phi}$. Then we have $F(\theta_1,\theta_2)=G(q\theta_1-p\theta_2)$, and
\be\label{G-F-norm}
||G||_{\ti h}\leq\sum_{l\in\Z}||\hat F(lq,-lp)||e^{2\pi |l|\ti h}=||F||^\#_h.
\ee

Let $\Phi(t)$ be its basic matrix solution of the $\frac{1}{|\tau|}$-periodic ODE
\be
\label{ode}
\frac{dx}{dt}=G(\tau t)x
\ee
with $\Phi(0)=I$. Let $C:= |\tau|\log \Phi(\frac{1}{|\tau|})\in\fso(3,\R)$, i.e., $C$ satisfies
\be\label{C-form}
\Phi(\frac{1}{|\tau|})=\exp(\frac{1}{|\tau|}\cdot C),\qquad \|C\|\leq ||G||_{\ti h}\leq ||F||^\#_h.
\ee
We consider the complexification of (\ref{ode}), and consider the case of $t+is$ with some fixed $t\in \R$ and $|s|<\frac{\ti h}{|\tau|}$,
\be
\label{ode-img}
\frac{dx}{d(is)}=G(t+\tau s i)x.
\ee
Then, by Grownwall inequality and (\ref{G-F-norm}), as long as $|s|<\frac{\ti h}{|\tau|}$,
\be
\|\Phi(t+is)\|\leq \|\Phi(t)\|e^{|s|||G||_{\ti h}}\leq \|\Phi(t)\|e^{|s|||F||^\#_h}\leq e^{|s|||F||^\#_h},
\ee
($\|\Phi(t)\|=1$ for $t\in\R$ and $\Phi(t)$ is then $\SO(3,\R)-$valued). Then,
\be\label{Phi-est.}
\sup_{|\Im t|<\frac{\ti h}{|\tau|}}||\Phi(t)||\leq \exp\left\{\frac{||F||^\#_h \ti h}{|\tau|}\right\}
\leq \exp \left\{\frac{ (|q|+|p|) h}{|\tau|}||F||^\#_h\right\}.
\ee
And, for fixed $t\in \R$ and $|s|<\frac{\ti h}{|\tau|}$,  by (\ref{C-form})
\be
\|e^{(t+is)C}\|\leq \|e^{tC}\|\|e^{isC}\|\leq |\exp \{|s|\|C\|\}\leq \|e^{|s|||F||^\#_h}
\ee
($C\in \fso(3,\R)$ and $t\in \R$ means that $e^{tC}\in \SO(3,\R)$), which leas to
\be\label{exp-C-est.}
\sup_{|\Im t|<\frac{\ti h}{|\tau|}} \|e^{tC}\|\leq \exp \left\{\frac{||F||^\#_h \ti h}{|\tau|}\right\}\leq \exp \left\{\frac{(|q|+|p|) h}{|\tau|}||F||^\#_h\right\}.
\ee

Let $B_1(t)=\Phi(t)\exp(-Ct)$. By the definition of $C$, $B_1(t)=B_1(t+\frac{1}{|\tau|})$, and $B_1$ conjugate
\eqref{ode} to $\frac{dx}{dt}=C x$. It means
\be
\label{floquet-conj.}
B_1'(t)=B_1(t)G(\tau t)-CB_1(t).
\ee

Define $B(\theta_1,\theta_2):= B_1(\frac{1}{\tau}(q\theta_1-p\theta_2))\in C_h^\omega(\T^2,\fso(3,\R))$.
By (\ref{floquet-conj.}), it satisfies
\begin{eqnarray}
\partial_\omega B&=&\alpha \frac{\partial}{\partial \theta_1} B(\theta_1,\theta_2)+\frac{\partial}{\partial \theta_2} B(\theta_1,\theta_2)\nonumber\\
&=& \frac{q\alpha-p}{\tau}\frac{d}{dt}B_1'(\frac{1}{\tau}(q\theta_1-p\theta_2))=B_1'(\frac{1}{\tau}(q\theta_1-p\theta_2))\nonumber\\
&=&B_1(\frac{1}{\tau}(q\theta_1-p\theta_2))G(q\theta_1-p\theta_2)-CB_1(\frac{1}{\tau}(q\theta_1-p\theta_2))\nonumber\\
&=& B F -C B.
\end{eqnarray}
Thus  $B$ conjugate
$\begin{cases}
\dot{x}=F(\theta)x\\
\dot{\theta}=\omega
\end{cases}$ to $\,$ $\begin{cases}
\dot{x}=Cx\\
\dot{\theta}=\omega
\end{cases}$. By (\ref{Phi-est.}, \ref{exp-C-est.}),  (\ref{est.-F-lem}) follows
\end{proof}


\begin{thebibliography}{99}
\bibitem{Am09}
 Amor, Sana Hadj,
\textit{H\"{o}lder Continuity of the rotation number for quasi-periodic co-cycles in $\SL(2,\R)$},
 Communications in mathematical physics 2, no. 287 (2009): 565-588.


\bibitem{A3}
Avila, Artur,
\textit{Almost reducibility and absolute continuity I}, 
arXiv preprint (2010), arXiv:1006.0704.


\bibitem{A2}
Avila, Artur,
\textit{KAM, Lyapunov exponents, and the spectral dichotomy for one-frequency Schr\"odinger operators}, arXiv preprint (2023), arXiv:2307.11071.



\bibitem{A15}
Avila, Artur,
\textit{Global theory of one-frequency Schr\"odinger operators},
Acta Mathematica 215, no. 1 (2015): 1-54.

\bibitem{AJ10}
Avila, Artur, and Jitomirskaya, Svetlana,
\textit{Almost localization and almost reducibility},
 Journal of the European Mathematical Society 12, no. 1 (2009): 93-131.

\bibitem{AFK11}
Avila, Artur, Fayad, Bassam, and Krikorian, Rapha\"el,
\textit{A KAM scheme for $\SL(2,\R)$-cocycles with Liouvillean frequencies},
Geometric and Functional Analysis 21, no. 5 (2011): 1001-1019.

\bibitem{AJS14}
Avila, Artur, Jitomirskaya, Svetlana, and Sadel, Christian,
\textit{Complex one-frequency Cocycles},
Journal of the European Mathematical Society 16, no. 9 (2014): 1915-1935.

 
\bibitem{AJM}
Avila, Artur, Jitomirskaya, Svetlana, and Marx, Christoph A.,
\textit{Spectral theory of extended Harper’s model and a question by Erdős and Szekeres}, 
Inventiones mathematicae 210 (2017): 283-339.


   \bibitem{AKL}
Avila, Artur, Khanin, Konstantin, and Leguil, Martin,
\textit{Invariant graphs and spectral type of Schr\" odinger operators.}
arXiv preprint (2020), arXiv:2004.09137.


\bibitem{AK06}
Avila, Artur, and Krikorian, Rapha\"el,
\textit{Reducibility or nonuniform hyperbolicity for quasiperiodic Schr\"odinger cocycles},
Annals of Mathematics (2006): 911-940.


\bibitem{AK15}
Avila, Artur, and Krikorian, Rapha\"el,
\textit{Monotonic cocycles}, 
Inventiones mathematicae 202 (2015): 271-331.



\bibitem{AKP}
Avila, Artur, Krikorian, Rapha\"el, and Pan, Yi,
\textit{Renormalization of symplectic quasi-periodic cocycles},
In preparation.

\bibitem{AYZ}
Avila, Artur, You, Jiangong, and Zhou, Qi,
\textit{Sharp phase transitions for the almost Mathieu operator},
Duke Mathematical Journal 166 (2017): 2697-2718.


\bibitem{AYZ16}
Avila, Artur, You, Jiangong, and Zhou, Qi,
\textit{Dry ten Martini problem in the non-critical case},
 arXiv preprint (2023), arXiv:2306.16254.


\bibitem{Arnold} 
Arnol'd, Vladimir I.,
\textit{Small Denominators I: on the Mapping of a Circle into itself, AMS Transl.},  
In {\it Ser}, vol. 2, no. 46 (1965): 213-284. 


\bibitem{CCYZ19}
Cai, Ao, Chavaudret, Claire, You, Jiangong, and Zhou, Qi. 
\textit{Sharp H\"older continuity of the Lyapunov exponent of finitely differentiable quasi-periodic cocycles}, Mathematische Zeitschrift 291 (2019): 931-958.


\bibitem{De}
Denjoy, Arnaud,
\textit{Sur les courbes d\'efinies par les \'equations diff\'erentielles \`a la surface du tore},
 Journal de math\'ematiques pures et appliqu\'ees 11 (1932): 333-376.



\bibitem{DS75}
Dinaburg, Efim I., and Sinai, Yakov G.,
\textit{The one-dimensional Schrödinger equation with a quasi-periodic potential},
 Functional Analysis and Its Applications 9, no. 4 (1975): 279-289.


\bibitem{E92}
Eliasson, H\r{a}kan,
\textit{Floquet solutions for the one-dimensional quasiperiodic Schr\"{o}dinger equation},
 Communications in Mathematical Physics 146 (1992): 447-482.



\bibitem{E02}
Eliasson, H\r{a}kan,
\textit{Ergodic skew-systems on $\T^d\times\SO(3,\R)$},
Ergodic theory and dynamical systems 22, no. 5 (2002): 1429-1449.

\bibitem{E}
Eliasson, H\r{a}kan, 
\textit{Reducibility and point spectrum for linear quasi-periodic skew-products}, 
Proceedings of the International Congress of Mathematicians. Vol. II (1998): 779-787.


\bibitem{FK09}
Fayad, Bassam, and Krikorian, Rapha\"el,
\textit{Rigidity results for quasiperiodic $\SL (2, \R)$-cocycles.}
 J. Mod. Dyn 3, no. 4 (2009): 497-510.

\bibitem{FK}
Fayad, Bassam, and Krikorian, Rapha\"el, 
\textit{Some questions around quasi-periodic dynamics}, 
In Proceedings of the International Congress of Mathematicians (ICM 2018) (In 4 Volumes) Proceedings of the International Congress of Mathematicians (2018): 1909-1932. 


\bibitem{Fra00}
Fr\k{a}czek, Krzysztof,
\textit{On cocycles with values in the group $\SU(2)$},
Monaschefte f\"{u}r Mathematik(2000), 279-307.


\bibitem{Fra04}
Fr\k{a}czek, Krzysztof,
\textit{On the degree of cocycles with values in the group $\SU(2)$},
Israel Journal of Mathematics 139 (2004), 293–317.



\bibitem{GLL}
Gabriel, Patrick, Lemanczyk, Mariusz, and Liardet, Pierre,
\textit{Ensemble d'invariants pour les produits crois\'es de Anzai},
M\'emoires de la Socié\'eté\'e Math\'ematique de France 47 (1991): 1-102.


\bibitem{GY}
Ge, Lingrui, and You, Jiangong,
\textit{Arithmetic version of Anderson localization via reducibility}, 
Geometric and Functional Analysis 30 (2020): 1370-1401.


\bibitem{He79}
Herman, Michael R.,
\textit{Sur la conjugaison diff\'erentiable des diff\'eomorphismes du cercle \`a des rotations}, 
Publications Math\'ematiques de l'IH\'ES 49 (1979): 5-233.


\bibitem{He85}
Herman, Michael R.,
\textit{Simple proofs of local conjugacy theorems for diffeomorphisms of the circle with almost every rotation number}, 
Boletim da Sociedade Brasileira de Matem\'atica-Bulletin/Brazilian Mathematical Society 16, no. 1 (1985): 45-83.


\bibitem{H11}
Hou, Xuanji,
\textit{Ergodicity, minimality and reducibility of cocycles on some compact groups},
Taiwanese Journal of Mathematics 15, no. 3 (2011): 1247-1259.


\bibitem{HY12}
Hou, Xuanji, and You, Jiangong,
\textit{Almost reducibility and non-perturbative reducibility of quasi-periodic linear systems},
 Inventiones mathematicae 190, no. 1 (2012): 209-260.


\bibitem{HSY19}
Hou, Xuanji, Shan, Yuan, and You, Jiangong,
\textit{Construction of QuasiPeriodic Schr\"{o}dinger Operators with Cantor Spectrum},
Annales Henri Poincaré, vol. 20, pp. 3563-3601. Springer International Publishing, 2019.

\bibitem{ILR}
Iwanik, Anzelm, Lema\'nczyk, Mariusz, and Rudolph, Daniel,
\textit{Absolutely continuous cocycles over irrational rotations}, 
Israel Journal of Mathematics 83 (1993): 73-95.

\bibitem{Ka15}
Karaliolios, Nikolaos,
\textit{Global aspects of the reducibility of quasiperiodic cocycles in semisimple compact Lie groups},
M\'emoires de la Soci\'e\'e Math\'ematique de France 146, (2015).


\bibitem{Ka17}
Karaliolios, Nikolaos,
\textit{Differentiable rigidity for quasi-periodic cocyles in compact Lie groups},
Journal of Modern Dynamics 11 (2017): 125-142.


\bibitem{Ka18}
Karaliolios, Nikolaos,
\textit{Continuous spectrum or measurable reducibility for quasi-periodic Cocycles in $\T^d\times \SU(2)$},
Communications in Mathematical Physics 358 (2018): 741-766.


\bibitem{K99a}
Krikorian, Rapha\"el,
\textit{R\'educitibilit\'e presque partout des flots fibr\'es quasi-p\'eriodiques \`a valeurs dans les groupes compacts,}
Annales scientifiques de l'Ecole normale sup\'erieure, vol. 32, no. 2 (1999): 187-240.


\bibitem{K99b}
Krikorian, Rapha\"el,
\textit{R\'educitibilit\'e des syst\`emes produits-crois\'es \`a valuers dans des groupes compacts,}
Ast\'erisque, 259 (1999).



\bibitem{K01}
Krikorian, Rapha\"el,
\textit{Global density of reducible quasi-periodic cocycles on $\T^1 \times \SU(2)$,}
 Annals of Mathematics (2001): 269-326.

\bibitem{Kri02}
Krikorian, Rapha\"el,
\textit{Reducibility, differentiable rigidity and Lyapunov exponents for quasiperiodic cocycles on $\T\times\SL(2,\R)$},
arXiv preprint (2002), arXiv:math/0402333.


\bibitem{KO89a}
Katznelson, Yitzhak, and Ornstein, Donald,
\textit{The differentiability of the conjugation of certain diffeomorphisms of the circle}, 
Ergodic Theory and Dynamical Systems 9, no. 4 (1989): 643-680.


\bibitem{KO89b}
Katznelson, Yitzhak, and Ornstein, Donald,
\textit{The absolute continuity of the conjugation of certain diffeomorphisms of the circle},
 Ergodic Theory and Dynamical Systems 9, no. 4 (1989): 681-690.


\bibitem{KS87}
Khanin, Konstantin M., and Sinai, Yakov G.,
\textit{A new proof of M. Herman's theorem},
 Communications in Mathematical Physics 112 (1987): 89-101.





\bibitem{Ko84}
Kotani, Shinichi,
\textit{Ljapunov indices determine absolutely continuous spectra of stationary random one-dimensional Schr\"odinger operators},
North-Holland Mathematical Library, Elsevier, vol. 32 (1984): 225-247. 

\bibitem{MJ}
Marx, Christoph A., and Jitomirskaya, Svetlana,
\textit{Dynamics and spectral theory of quasi-periodic Schr\"odinger-type operators},
Ergodic Theory and Dynamical Systems 37, no. 8 (2017): 2353-2393.


\bibitem{MP84}
Moser, J\"urgen, and P\"oschel, J\"urgen,
\textit{An extension of a result by Dinaburg and Sinai on quasi-periodic potentials}, 
Commentarii Mathematici Helvetici 59 (1984): 39-85.


\bibitem{P23}
Pan, Yi,
\textit{Renormalization of symplectic quasi-periodic cocycles,}
PhD thesis (2023), www.theses.fr/2023CYUN1204.


\bibitem{Si83}
Simon, Barry,
\textit{Kotani theory for one dimensional stochastic Jacobi matrices}, 
Communications in mathematical physics 89 (1983): 227-234.


\bibitem{SK89}
Sinai, Yakov G., and Khanin, Konstantin M.,
\textit{Smoothness of conjugacies of diffeomorphisms of the circle with rotations}, 
Russian Mathematical Surveys 44, no. 1 (1989): 69-99.


\bibitem{WXYZ}
Wang, Jing.   Xu, Xu.  You, Jiangong and Zhou, Qi,
\textit{ Absolute continuity of the integrated density of states in the localized regime},
arXiv preprint (2023), arXiv:2305.00457.

\bibitem{Y84}
Yoccoz, Jean-Christophe,
\textit{Conjugaison differentielle des diff\'eomorphismes du cercle dont le nombre de rotation v\'erifie une
condition Diophantienne},
Annales scientifiques de l'\'Ecole Normale Sup\'erieure, vol. 17, no. 3 (1984): 333-359. 


\bibitem{Y}
Yoccoz, Jean-Christophe,
\textit{Analytic linearization of circle diffeomorphisms. Dynamical systems and small divisors},
in Dynamical Systems and Small Divisors: Lectures given at the CIME Summer School held in Cetraro, Italy, June 13-20, 1998 (2002): 125-173.




\bibitem{You}
You, Jiangong,
\textit{Quantitative almost reducibility and its applications},
In Proceedings of the International Congress of Mathematicians (ICM 2018) (In 4 Volumes) Proceedings of the International Congress of Mathematicians (2018): 2113-2135. 

\bibitem{YZ13}
You, Jiangong, and Zhou, Qi,
\textit{Embedding of analytic quasi-periodic cocycles into analytic quasi-periodic linear systems and its applications}, 
Communications in Mathematical Physics 323 (2013): 975-1005.


\end{thebibliography}
\end{document}